\documentclass{article}

\usepackage[a4paper]{geometry}
\usepackage{xcolor}

\usepackage{fullpage} 
\usepackage{authblk}
\usepackage{amsmath}
\usepackage{amsthm}
\usepackage{amssymb}
\usepackage{esint}
\usepackage{mathrsfs}
\usepackage{listings}
\usepackage{mathtools}
\usepackage{caption}
\usepackage{subfigure}
\usepackage{graphicx}
\usepackage{hyperref}
\usepackage{tikz-cd}
\usepackage{standalone}
\usepackage{float}
\usetikzlibrary{patterns}

\usepackage[capitalise]{cleveref}
\let\C\relax 

\hypersetup{linktocpage}
\usepackage [english]{babel}
\usetikzlibrary{babel}
\usepackage [autostyle, english = american]{csquotes}
\MakeOuterQuote{"}

\newtheorem{theorem}{Theorem}[section]
\newtheorem{proposition}[theorem]{Proposition}
\newtheorem{definition}[theorem]{Definition}
\newtheorem{remark}[theorem]{Remark}
\newtheorem{lemma}[theorem]{Lemma}
\newtheorem{corollary}[theorem]{Corollary}

\newcommand{\R}{\mathbb{R}}
\newcommand{\N}{\mathbb{N}}
\newcommand{\Z}{\mathbb{Z}}
\newcommand{\Q}{\mathbb{Q}}
\newcommand{\T}{\mathbb{T}}
\newcommand{\C}{\mathbb{C}}
\newcommand{\Id}{\mathrm{I}}

\newcommand{\curl}{\mathrm{curl}\,}
\renewcommand{\div}{\mathrm{div}\,}

\usepackage{tocbasic} 
\usepackage[multiuser,draft,inline,marginclue]{fixme}
\fxusetheme{colorsig}
\FXRegisterAuthor{rou}{arou}{Roussel}
\FXRegisterAuthor{rr}{arr}{Remi}

\geometry{left=2.3cm,right=1.9cm,bottom=2.5cm,top=2.9cm}

\title{Shape differentiation for Poincaré maps of harmonic fields in toroidal domains}

\author[1]{Robin Roussel \thanks{robin.roussel@sorbonne-universite.fr}}

\affil[1]{Laboratoire Jacques-Louis Lions, Sorbonne Université, Inria, Paris}

\usepackage{hyperref}
\hypersetup{linktocpage}
\usepackage[capitalise]{cleveref}

\begin{document}
\maketitle
\begin{abstract}
In this article, we study Poincaré maps of harmonic fields in toroidal domains using a shape variational approach. Given a bounded domain of $\R^3$, we define its harmonic fields as the set of magnetic fields which are curl free and tangent to the boundary. For toroidal domains, this space is one dimensional, and one may thus single out a harmonic field by specifying a degree of freedom, such as the circulation along a toroidal loop. We are then interested in the Poincaré maps of such fields restricted to the boundary, which produce diffeomorphisms of the circle. We begin by proving a general shape differentiability result of such Poincaré maps in the smooth category, and obtain a general formula for the shape derivative. We then investigate two specific examples of interest; axisymmetric domains, and domains for which the harmonic field has a diophantine rotation number on the boundary. We prove that, in the first case, the shape derivative of the Poincaré map is always identically zero, whereas in the second case, assuming an additional condition on the geometry of the domain, the shape derivative of the Poincaré map may be any smooth function of the circle by choosing an appropriate perturbation of the domain. 
\end{abstract}
\tableofcontents

\section*{Introduction}\label{sec:intro}

When designing external magnetic fields for confinement in fusion reactors, such as tokamaks or stellarators, dynamical properties of the field lines play a key role in the stability properties of the plasma. In this context, spacial variations of the magnetic field magnitude lead to a drift of charged particles. However, introducing a twist in the magnetic field lines is known to average out the drift along the trajectories of charged particles \cite{imbert-gerard_introduction_2020}[Chapter 5]\cite{littlejohn_variational_1983,helander_collisional_2005}[Chapter 7]. To quantify this notion of twist, an important object in reactor design is the so-called rotational transform \cite{imbert-gerard_introduction_2020}[Chapter 7]. Assuming the magnetic field is foliated by two-dimensional tori, the rotational transform is defined on each leaf by the average ratio of poloidal turns and toroidal turns along the field lines. Mathematically, this is given by the rotation number of the Poincaré map on each leaf. 

In addition to giving information on the stability of charged particles in the plasma, the rotational transform is also useful in studying the topological and dynamical stability of the magnetic field itself. Indeed, due to errors arising from the coil design and fluctuations in the plasma, the actual magnetic field inside a reactor will be a perturbation of the theoretical one. Loosely speaking, KAM theory and Hamiltonian representations of magnetic fields lead to non-degenerate leaves of the magnetic field with a diophantine rotational transform being preserved after perturbations, whereas leaves with rational rotational transform may lead to chaotic regions and magnetic islands \cite{imbert-gerard_introduction_2020}[Chapter 10]\cite{lee_magnetic_1990}.

Mathematically, however, the assumption of a foliated magnetic field leads to complications. Indeed, the existence of such foliated magnetic fields is still closely linked to open questions. The most notable problem related to existence of foliations is Grad's conjecture \cite{grad_toroidal_1967}, which states that foliated smooth MHD equilibria with non-constant pressure should be axisymmetric. Theoretical results as well as a solid mathematical framework are therefore scarce when it comes to the study of rotational transform profiles. We still refer to a series of articles by Enciso, Luque and Peralta-Salas \cite{enciso_existence_2015,enciso_beltrami_2020,enciso_mhd_2022}, which study the dynamical properties of Beltrami fields. These articles develop a thorough theory to study Poincaré maps of Beltrami fields with small eigenvalue in thin toroidal tubes, and deduce several interesting results from this, such as the construction of non-trivial stepped pressure MHD equilibria in \cite{enciso_mhd_2022}.

In this paper, we are more specifically interested in the study of harmonic fields. Given a domain $\Omega$ of $\R^3$, we say that a vector field on $\Omega$ is harmonic if it is divergence free, curl free, and tangent to the boundary. When $\Omega$ has the topology of a full torus, the space of harmonic fields is one dimensional, and we may therefore single out a generator of this space by picking a normalization criterion, such as the circulation along a toroidal loop. From a physical point of view, harmonic fields are important in the design of stellarators, which aim to stabilize plasma without inducing current inside it. This therefore leads to magnetic fields with small curl inside the plasma domain, which may be approximated by harmonic fields. The fact that one may assign a harmonic field to each toroidal domain can lead to stellerator design using shape optimization techniques. This is what was done for example in \cite{robin_shape_2024} to optimize magnetic helicity of harmonic fields, which is another way to quantify the twist of a magnetic field from a topological point of view instead of a dynamical one. 

Although, to the author's knowledge, there is no clearly established conjecture in this direction, there seems to be no result on the existence of non-trivial foliated harmonic fields. To simplify things, we therefore choose to study the Poincaré maps of harmonic fields on the boundary only. Indeed, since harmonic fields of $\Omega$ are by definition tangent to the boundary, they define a flow on $\partial \Omega$. Therefore, if $\Omega$ is a toroidal domain, the Poincaré map of the harmonic field restricted to the boundary is a circle diffeomorphism, to which we may associate a rotation number. Since one may assign a harmonic field to each toroidal domain, the approach of this article is to investigate properties of the Poincaré maps of harmonic fields on the boundary using a shape differentiation approach, that is, to study how variations of the domain may lead to variations of the Poincaré map in the space of diffeomoprhisms of the circle. To avoid technicalities related to regularity, we choose to work in the smooth category throughout the article. We will therefore only be working with smooth domains, use smooth functions and vector fields, and prove smoothness of the studied objects when needed.

\subsection*{General approach and main results}\label{sub_sec:results}
Before discussing the contributions of the article, we give a formal introduction to the main objects we will study. The precise definitions will be given in \cref{sec:definitions}. Let $\Omega$ be a smooth toroidal domain, that is, a smooth open set of $\R^3$ such that $\bar{\Omega}$ is diffeomorphic to the full torus $S^1 \times D^2$, where $D^2$ is the closed unit disk of $\R^2$, and $S^1 = \R/\Z$. Given a curve $\gamma$ which generates the first homology group of $\bar{\Omega}$, there exists a unique harmonic field $B(\Omega)$ verifying 
$$
\int_\gamma B(\Omega) \cdot dl = 1.
$$
As will be further explained in \cref{sec:definitions}, this can bee seen by identifying harmonic fields as representatives of the first De Rham cohomology space of $\Omega$, and using De Rham's theorem. In order to define the Poincaré map of $B(\Omega)$ on the boundary as a diffeomorphism of the circle, we need the following data:
\begin{itemize}
    \item $\gamma$, a generator of the first homology group of $\bar{\Omega}$,
    \item $\Sigma$, a poloidal cut of $\partial \Omega$,
    \item Coordinates on $\Sigma$, that is, a diffeomorphism between $S^1$ and $\Sigma$.
\end{itemize}
Furthermore, $\Sigma$ needs to be a Poincaré cut of $B(\Omega)_{|\partial\Omega}$. All the required data and assumptions will be given by the notion of admissible embeddings of the torus in $\R^3$, which we denote $\mathrm{Emb}_{\mathrm{ad}}\left(\T^2;\R^3\right)$ (see \cref{def:admissible_embeddings}). We are thus able to consider the mapping 
$$
\Pi : \mathrm{Emb}_{\mathrm{ad}}\left(\T^2; \R^3\right) \rightarrow \mathrm{Diff}(S^1),
$$
which associates with each admissible embedding the Poincaré map of $B(\Omega)$, where $\Omega$ is the smooth toroidal domain whose boundary is the image of the embedding. We also note that we model $S^1$ as $\R / \Z$ throughout the paper, so that $S^1$ is equipped with canonical coordinates inherited from $\R$.

Although $\Pi$ is not, strictly speaking, a shape function (as it also depends on the coordinates on the boundary), the techniques we will use to study it are largely inspired by shape differentiation. Let $\mathcal{E}$ be an admissible embedding, and $t \mapsto P_{t}$ a differentiable path of smooth diffeomorphisms of $\R^3$ with $P_{0}=id$. Let $V$ in $\mathrm{Vec}\left(\R^3\right)$ be the derivative of $t \mapsto P_{t}$ at time $t=0$. As will be further explained later in the article, $\mathcal{E}_t := P_{t} \circ \mathcal{E}$ is then admissible as well for $t$ small enough. Our goal is then to study the derivative of $t \mapsto \Pi(\mathcal{E}_t)$ in the space of circle diffeomorphisms. More precisely, we will show that there exist a linear map $V \mapsto \Pi'(\mathcal{E}; V)$ such that 
$$
\Pi(\mathcal{E}_t) = \Pi(\mathcal{E}) + t\Pi'(\mathcal{E}; V) + o(t).
$$
We refer to $\Pi'(\mathcal{E}; V)$ as the shape derivative of the Poincaré map at $\mathcal{E}$ in the direction $V$. 

In this article, we establish this shape differentiability result and we study the image of the map $V \mapsto \Pi'(\mathcal{E}; V)$ in specific cases. The first case we will study is the one where $\mathcal{E}$ is the usual embedding of the standard axisymmetric torus. In this case, we will show that $\Pi'(\mathcal{E};V)$ actually vanishes for all $V$. This result is given in \cref{th:zero_derivative_axisymmetric}. Then, we will study the case where $\Pi(\mathcal{E})$ is a diophantine rotation. In this case, we will show that under an additional assumption on the geometry of the domain, the mapping $V \mapsto \Pi'(\mathcal{E};V)$ is surjective. This result is given in \cref{th:surjectivity_diophantine}. 

\subsection*{Outline of the article}
The article is organized as follows.
\begin{itemize}
    \item In \cref{sec:definitions}, we give a proper definition of the objects we will study throughout the paper. Firstly, we define in \cref{sub_sec:harmonic_fields} a way to associate a harmonic field with each toroidal domain. We then give a {weak} formulation for this harmonic field, which will be used during the shape differentiation process. In \cref{sub_sec:poincare_map}, we then define the notion of Poincaré map we will be studying. This is done first by defining a notion of admissible embeddings of the torus in $\R^3$ which provides the necessary data, and then by describing how we construct the Poincaré map from an admissible embedding.

    \item In \cref{sec:shape_differentiation}, we study the general shape differentiability of the Poincaré map of harmonic fields. The more demanding step is to prove shape differentiability of the harmonic fields in the smooth category, which is given by \cref{th:differentiability_B} of \cref{sub_sec:shape_diff_harmonic_fields}. The classical method to obtain Lagrangian shape differentiability of PDE solutions is to pull the {weak} formulation back onto a fixed domain, and to use an implicit function theorem argument (see for example \cite{henrot_shape_2018}[Chapter 5]). This approach was already taken to study the shape differentiability of harmonic fields in \cite{robin_shape_2024}. However, this method generally leads to shape differentiability in the variational space of the PDE, which in our case is H-curl. Since we want to differentiate the flow of this vector field, this regularity is not sufficient. We therefore proceed by identifying the correct shape derivative, and then estimate the associated first-order remainder in $\mathcal{C}^k$ norms using elliptic regularity results to obtain shape differentiability in the smooth category. In \cref{sub_sec:shape_diff_poinc}, we then obtain the shape derivative of the Poincaré map, which is relatively straightforward using the results of the previous section. We also provide a useful formula for the case in which the coordinates on the boundary linearize the harmonic field, which will be used in \cref{sec:axisymmetric,sec:diophantine}.
    
    \item In \cref{sec:axisymmetric}, we study the particular case of a standard axisymmetric torus. For axisymmetric domains the harmonic field is explicitly known, greatly simplifying the computations. We prove in \cref{th:zero_derivative_axisymmetric} that, in this case, the shape derivative of the Poincaré map always vanishes. This implies that around these domains, it is necessary to go to second-order in order to find local information about the Poincaré map of harmonic fields. The geometry of the domain plays a role in two steps of the proof. First through the explicit expression of the harmonic field and its relation with the curvature of the boundary, and second, through symmetries of the solution to a PDE which appears in the expression of the shape derivative of the harmonic field.
    \item In \cref{sec:diophantine}, we study the case where the Poincaré map has diophantine rotation number. Under an additional assumption relating the curvature of the boundary and the harmonic field, we prove \cref{th:surjectivity_diophantine}, which states that the shape derivative of the Poincaré map can be any smooth function of the circle if we choose a correct perturbation of the embedding. For this, we use cohomological equations to prove that the shape derivative of the Poincaré map can be any zero average function of the circle, and a specific normal perturbation to generate the last remaining dimension.
\end{itemize}

\subsection*{Notations}
\begin{itemize}
    \item $S^1_\ell = \R /(\ell \Z)$ and $S^1 = S^1_1$. We also define $\T^2 = \R^2 / \Z^2 \cong S^1 \times S^1$ and denote the closed unit disk of $\R^2$ as $D^2$.
    
    \item For two vectors $u$ and $v$ in $\R^3$, $u \cdot v$ is their Euclidean scalar product.

    \item Given a smooth manifold $M$ with (possibly empty) smooth boundary and $k$ in $\N \cup \{\infty\}$, $\mathcal{C}^k(M)$ is the space of real valued $k$ times differentiable functions on $M$, and $\mathrm{Vec}(M)$ is the set of smooth vector fields of $M$.
    
    \item Given a smooth manifold $M$ and a continuous family of vector fields $s \in \R \mapsto X_s \in \mathrm{Vec}(M)$, we denote 
    $$
    \overrightarrow{\exp}\int_{0}^{t} X_s ds,
    $$
    as the flow of $s \mapsto X_s$ at time $t$ when it is well-defined. In our case, the manifold will always be compact without boundary, so that there is global existence of flow.
    
    \item Let $X,Y$ be topological spaces and $f: X \rightarrow Y$ a continuous function. For $k$ in $\N$, $H_k(X)$ is the $k$-th singular homology group of $X$, and $f_* : H_k(X) \rightarrow H_k(Y)$ is the group morphism associated to $f$. We refer to \cite{hatcher_algebraic_2002}[Chapter 2] for the precise definitions of these objects. We note however that only basic homological notions will be used so that an intuitive understanding of singular homology and its relation with De Rham cohomology will be sufficient to understand its use in the paper.
    
    \item Suppose $\Omega$ is a smooth toroidal domain of $\R^3$, that is an open set such that $\bar{\Omega}$ is smoothly diffeomorphic to $S^1 \times D^2$, and $(\phi, \theta):\partial \Omega \rightarrow \T^2$ are smooth coordinates on $\partial \Omega$. 
    \begin{itemize}
        \item $n$ is the unit normal outward pointing vector field on $\partial \Omega$.

        \item $\div_\Gamma$ is the divergence on $\partial \Omega$, and $\nabla_\Gamma$ the tangential gradient. Both are defined using the metric on $\partial \Omega$ inherited from the Euclidean metric on $\R^3$.
    
        \item $\sqrt{g}$ is the square root of the determinant of the metric matrix in the $(\phi, \theta)$ coordinates. As such, the surface form on $\partial \Omega$ is given by $\sqrt{g} d\phi d\theta$.
        
        \item Given $\vec \omega = (\omega_1, \omega_2)$ in $\R^2$ and $f$ in $\mathcal{C}^\infty(\partial \Omega)$, we denote $\left\langle \vec \omega, \nabla_{\T^2} f \right\rangle = \omega_1 \partial_\phi f + \omega_2 \partial_\theta f$.
        
        \item Given a tangent vector $u$ on $\partial \Omega$, $u^\perp:=n \times u$.
    \end{itemize} 
\end{itemize}

\section{Definitions}\label{sec:definitions}

\subsection{Harmonic fields}\label{sub_sec:harmonic_fields}
Let $\Omega$ be a smooth toroidal domain of $\R^3$, that is, an open set of $\R^3$ such that $\bar{\Omega}$ is smoothly diffeomorphic to $S^1 \times D^2$. We define the space of harmonic fields of $\Omega$ as follows
$$
\mathcal{K}(\Omega) = \left\{{u} \in L^2(\Omega)^3 \mid \curl {u} = 0, \hspace{1ex} \div {u} = 0 \text{ and } {u}\cdot n = 0 \right\},
$$
where the curl and divergence should be understood in the weak sense. We now explain how one may single out a harmonic field in $\mathcal{K}(\Omega)$. Using the classical identification between vector fields and differential one-forms, we can relate the set of harmonic vector fields $\mathcal{K}(\Omega)$ to the set of harmonic one forms on $\bar{\Omega}$. Furthermore, from a classical result of Hodge theory (see \cite{schwarz_hodge_1995}[Theorem 2.6.1]) harmonic one forms are representatives of the first De Rham cohomology spaces of $\bar{\Omega}$. From this, we deduce that $\mathcal{K}(\Omega)$ is one dimensional. Then, choosing a generator $\gamma$ of the singular homology group $H_1\left(\bar{\Omega}\right) \cong \Z$, we know from De Rham's theorem that there exists a unique harmonic field $B(\Omega) \in \mathcal{K}(\Omega)$ such that 
$$
\int_\gamma B(\Omega) \cdot dl = 1.
$$
This harmonic vector field in fact also depends on the choice of generator $\gamma$, so that $B(\Omega)$ is a slight abuse of notation. However, if we were to choose a different generator $\tilde{\gamma} = \pm \gamma$, we would have 
$$
\int_{\tilde{\gamma}} B(\Omega) \cdot dl = \pm 1,
$$
so that changing the generator of $H_1(\bar{\Omega})$ can only change the harmonic field $B(\Omega)$ by a sign. We also note that, using the previously mentioned identification with harmonic one forms, we know from \cite{schwarz_hodge_1995}[Theorem 2.2.7] that $B(\Omega)$ is in fact smooth up to the boundary, that is, it is in $\mathrm{Vec}\left(\bar{\Omega}\right)$.

Although this definition is sufficient to characterize $B(\Omega)$, it will also be useful for shape differentiation to have a {weak} formulation for the harmonic field. Since $\Omega$ is a smooth toroidal domain, there exists a smooth embedding $\mathcal{F}: S^1 \times D^2 \ni {(\phi, x)}\mapsto \mathcal{F}{(\phi, x)} \in \bar{\Omega}$. We define the cutting surface $\Sigma$ of $\Omega$ as 
$$
\Sigma = \left\{ \mathcal{F}{(0, x)} \mid x \in D^2 \right\}.
$$
Therefore, $\mathcal{F}$ defines a diffeomorphism from ${(0,1) \times S^1}$ to $\bar{\Omega} \backslash \Sigma$. $\Omega \backslash \Sigma$ is then a simply connected pseudo-Lipschitz domain \cite{amrouche_vector_1998}[Definition 3.1]. Given a function $u$ in $H^1(\Omega \backslash \Sigma)$, $u \circ \mathcal{F}$ is in $H^1({(0,1) \times D^2})$, and we can define its traces on ${\{0\} \times D^2}$ and ${\{1\} \times D^2}$. This allows us to define the jump of $u$ across $\Sigma$ as 
$$
[\![u]\!]_{\Sigma} = \left(\left(u \circ \mathcal{F}\right)_{|{\{1\} \times D^2}}\right) \circ \mathcal{F}^{-1} - \left(\left(u \circ \mathcal{F}\right)_{|{\{0\} \times D^2}}\right) \circ \mathcal{F}^{-1},
$$
which is a function of $H^{1/2}(\Sigma)$. We now define for $c \in \R$
$$
V_c(\Omega \backslash \Sigma) = \left\{u \in H^1(\Omega \backslash \Sigma) \mid [\![u]\!]_{\Sigma} = c \right\}.
$$
From \cite{amrouche_vector_1998}[Lemma 3.11], we know that for $u$ in $V_c(\Omega \backslash \Sigma)$, $\nabla u$ extends to a curl free vector field of $\Omega$, which we denote $\tilde{\nabla} u$. There is also a natural identification between $V_0(\Omega \backslash \Sigma)$ and $H^1(\Omega)$. This allows us to construct the harmonic field in the following way, as is done for example in \cite{amrouche_vector_1998,alonso-rodriguez_finite_2018}.
\begin{proposition}\label{prop:construction_B}
    There exists a unique zero average solution to the following {weak formulation}. Find $u \in V_1(\Omega \backslash \Sigma)$ such that for all $v \in H^1(\Omega)$
    \begin{equation}\label{eq:B}
        \int_{\Omega} \nabla u \cdot \nabla v = 0.
    \end{equation}
    Furthermore, $\tilde{\nabla}u$ is a harmonic field of $\Omega$.
\end{proposition}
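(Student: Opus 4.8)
The plan is to construct the solution $u$ via a standard energy-minimization / Lax--Milgram argument on a suitable affine subspace of $H^1(\Omega\setminus\Sigma)$, and then to verify that the extended gradient $\tilde\nabla u$ is indeed divergence free, curl free and tangent to the boundary. First I would fix a particular function $u_0\in V_1(\Omega\setminus\Sigma)$ with jump exactly $1$ across $\Sigma$ — for instance, take a smooth cutoff of the "angular" coordinate coming from the embedding $\mathcal F$, so that $u_0\circ\mathcal F$ interpolates between $0$ on $\{0\}\times D^2$ and $1$ on $\{1\}\times D^2$. Then $V_1(\Omega\setminus\Sigma)=u_0+V_0(\Omega\setminus\Sigma)$, and under the natural identification $V_0(\Omega\setminus\Sigma)\cong H^1(\Omega)$ the problem becomes: find $w\in H^1(\Omega)$ such that $\int_\Omega \nabla w\cdot\nabla v = -\int_\Omega\nabla u_0\cdot\nabla v$ for all $v\in H^1(\Omega)$, after which $u=u_0+w$. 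The bilinear form $a(w,v)=\int_\Omega\nabla w\cdot\nabla v$ is continuous and coercive on $H^1(\Omega)/\R$ (by the Poincaré--Wirtinger inequality, since $\Omega$ is connected, bounded and Lipschitz), and the right-hand side is a bounded linear functional on $H^1(\Omega)/\R$ because choosing $v\equiv 1$ gives $0$ on the left and $\int_\Omega\nabla u_0\cdot\nabla 1 = 0$ on the right as well. Lax--Milgram on the quotient then yields a unique solution modulo constants, and the zero-average normalization picks out a unique representative $u$; uniqueness of the zero-average solution to \eqref{eq:B} follows by subtracting two solutions and testing against the difference.

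Next I would show $\tilde\nabla u$ is harmonic. Curl-freeness is automatic: by \cite{amrouche_vector_1998}[Lemma 3.11] the field $\tilde\nabla u$ is by construction a curl-free extension of $\nabla u$ to all of $\Omega$. For the divergence, note that $\tilde\nabla u = \nabla u$ as an $L^2$ field on $\Omega$ (the two differ only on the measure-zero set $\Sigma$), so for any $v\in C_c^\infty(\Omega)\subset H^1(\Omega)$ equation \eqref{eq:B} reads $\int_\Omega \tilde\nabla u\cdot\nabla v = 0$, which is exactly the weak statement $\div\tilde\nabla u = 0$. For the boundary condition $\tilde\nabla u\cdot n = 0$, I would use \eqref{eq:B} with a general test function $v\in H^1(\Omega)$: an integration by parts (justified since $\div\tilde\nabla u=0$ in $L^2$, so the normal trace $\tilde\nabla u\cdot n$ is well-defined in $H^{-1/2}(\partial\Omega)$) turns $\int_\Omega\nabla u\cdot\nabla v$ into $\langle \tilde\nabla u\cdot n, v\rangle_{\partial\Omega}$; since $v|_{\partial\Omega}$ ranges over a dense subset of $H^{1/2}(\partial\Omega)$, this forces $\tilde\nabla u\cdot n = 0$. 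Here one must be slightly careful that the integration by parts is performed on $\Omega$ and not $\Omega\setminus\Sigma$ — the point of passing to $\tilde\nabla u$ is precisely that no boundary term on $\Sigma$ appears, because $\tilde\nabla u$ has no jump in its normal component across $\Sigma$ (it is a genuine vector field on $\Omega$).

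The main obstacle, and the part deserving the most care, is the bookkeeping around the cutting surface: making rigorous the identification $V_0(\Omega\setminus\Sigma)\cong H^1(\Omega)$, the claim that $\nabla u$ for $u\in V_c$ extends to a well-defined curl-free field $\tilde\nabla u$ on $\Omega$, and that \eqref{eq:B} tested against $H^1(\Omega)$ (rather than the larger $H^1(\Omega\setminus\Sigma)$) is the right variational problem. All of this is supplied by \cite{amrouche_vector_1998}, so in the write-up I would cite Lemma 3.11 and the surrounding discussion there rather than reprove it, and focus the argument on the Lax--Milgram step and the three defining properties of a harmonic field. One should also remark why the resulting $\tilde\nabla u$ is nonzero, hence actually a generator: if $\tilde\nabla u=0$ then $u$ is locally constant, forcing $\llbracket u\rrbracket_\Sigma=0\neq 1$, a contradiction — so the construction does single out a nontrivial element of the one-dimensional space $\mathcal K(\Omega)$.
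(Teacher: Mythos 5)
Your argument is correct, and it is essentially the standard construction the paper has in mind: the paper does not prove this proposition itself but defers to \cite{amrouche_vector_1998,alonso-rodriguez_finite_2018}, where exactly this Lax--Milgram/quotient-by-constants argument and the verification of the three defining properties of a harmonic field (curl-freeness via the constant-jump lemma, divergence-freeness from compactly supported test functions, and the vanishing normal trace from general $v \in H^1(\Omega)$) are carried out. Your additional remark that the jump condition forces $\tilde{\nabla}u \neq 0$ is a useful observation that the paper only exploits implicitly when computing the circulation along $\mathcal{F}_*\gamma$.
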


Moreover, the jump condition across $\Sigma$ leads to the equality 
$$
\int_{\mathcal{F}_*\gamma} \tilde{\nabla}u \cdot dl = 1,
$$
where $u$ is given by \cref{prop:construction_B}, $\gamma$ is the canonical generator of $H_1\left(S^1 \times D^2\right)$ and $\mathcal{F}_*$ is the isomorphism between $H_1\left(S^1 \times D^2\right)$ and $H_1\left(\bar{\Omega}\right)$ associated to $\mathcal{F}$. Therefore, $\tilde{\nabla}u$ is the harmonic field $B(\Omega)$ associated to the generator $\mathcal{F}_* \gamma$ of $H_1\left(\bar{\Omega}\right)$.

{
\begin{remark}
    This jump condition on $u$ arises from the fact that, using Poincaré's lemma, $B(\Omega)$ is locally a gradient vector field, but this is not true globally. We note that one may also use a multivalued, or $S^1$-valued, function $u$ to circumvent this jump condition. However, as mentioned earlier, this jump condition approach is already common for defining weak formulations of harmonic fields, and allows us to work in usual Sobolev spaces.
\end{remark}}

\subsection{Poincaré map}\label{sub_sec:poincare_map}
We now wish to define the Poincaré maps of harmonic fields on the boundary of toroidal domains. In order to do so, we proceed by specifying coordinates on the boundary. Indeed, having such coordinates allow{s} us to define a Poincaré cut and coordinates on this Poincaré cut, which is the required data to obtain the Poincaré map as a diffeomorphism of $S^1$. This is done by working with the set of smooth embeddings of $\T^2$ into $\R^3$, which we denote by $\mathrm{Emb}\left(\T^2;\R^3\right)$. We recall that, with an element $\mathcal{E}$ of $\mathrm{Emb}\left(\T^2; \R^3\right)$, we can associate an isomorphism $\mathcal{E}_*$ between the singular homology groups of $\T^2$ and the ones of $\mathcal{E}(\T^2)$. We denote by $\gamma_\phi$ and $\gamma_\theta$ the canonical generators of the homology group $H_1(\T^2)$. In order to be able to define the Poincaré map, we need to make some further assumptions on the embedding which are given by the following definition.

\begin{definition}\label{def:admissible_embeddings}
    Let $\mathcal{E}$ be in $\mathrm{Emb}\left(\T^2;\R^3\right)$. We say $\mathcal{E}$ is admissible if it satisfies the following conditions.
    \begin{itemize}
        \item $\mathcal{E}(\T^2)$ bounds a smooth toroidal domain $\Omega$ in $\R^3$.
        \item $\mathcal{E}$ is toroidal, that is 
        \begin{itemize}
            \item $\mathcal{E}_* \gamma_\phi$ is trivial in $H_1\left(\Omega^c\right)$ and generates $H_1\left(\bar{\Omega}\right)$,
            \item $\mathcal{E}_* \gamma_\theta$ is trivial in $H_1\left(\bar{\Omega}\right)$ and generates $H_1\left(\Omega^c\right)$.
        \end{itemize}
        \item $\mathcal{E}$ is transverse, that is, if $B(\Omega)$ is the harmonic field of $\Omega$ associated to the generator $\mathcal{E}_* \gamma_\phi$ then, $B(\Omega)^\phi$ is positive on $\partial \Omega$, where $(\phi, \theta) = \mathcal{E}^{-1}$ are the coordinates induced on $\partial \Omega$ by $\mathcal{E}$.
    \end{itemize} 
    We denote by $\mathrm{Emb}_{\mathrm{ad}}\left(\T^2; \R^3\right)$ the set of admissible embeddings of $\T^2$ into $\R^3$.
\end{definition}

\begin{remark}
    Here are a few remarks which may help the reader to interpret the definition of admissible embeddings:
    \begin{itemize}
        \item The first condition of \cref{def:admissible_embeddings} is not redundant. Indeed, although a smoothly embedded torus in $S^3$ always bounds a full torus, this result is not true for embeddings in $\R^3$. We refer to \cite{arnaud_recognition_2010}[Definition 3] for a description of such domains, referred to as knotted anti-toi, as well as \cite{cantarella_vector_2002}[Figure 13] for an illustration of such embedded tori.
        
        \item The second condition of \cref{def:admissible_embeddings} essentially states that $(\phi, \theta) = \mathcal{E}^{-1}$ define toroidal and poloidal coordinates respectively on the boundary of $\Omega$. Although it is only necessary to assume that $\mathcal{E}_* \gamma_\phi$ generates $H_1\left(\bar{\Omega}\right)$ to define the Poincaré map, the additional assumptions are here to ensure that the Poincaré map we will construct corresponds to what we may expect geometrically. For example, the assumption that $\mathcal{E}_* \gamma_\theta$ is trivial in $H_1\left(\bar{\Omega}\right)$ means that curves of constant $\phi$ correspond to poloidal cuts of $\partial \Omega$.
        
        \item If $\mathcal{F}:S^1 \times D^2 \rightarrow \R^3$ is a smooth embedding and $i: \T^2 \rightarrow S^1 \times D^2$ is the canonical injection onto $\partial \left(S^1 \times D^2\right)$, we obtain that $\mathcal{F} \circ i$ verifies the first two assumptions of \cref{def:admissible_embeddings} with $\Omega = \mathcal{F}\left(S^1 \times D^2\right)$.
        
        \item The last condition of \cref{def:admissible_embeddings} ensures that $B(\Omega)$ is transverse to poloidal cuts, that is, nowhere tangent to curves of constant $\phi$. Therefore, its Poincaré map may be defined on such cuts.
    \end{itemize}
\end{remark}

We now explain how we can define the Poincaré maps of harmonic fields. Let $\mathcal{E}$ be an admissible embedding, $\Omega$ be the toroidal domain such that $\partial \Omega = \mathcal{E}\left(\T^2\right)$, $B(\Omega)$ the harmonic field of $\Omega$, and $(\phi, \theta)$ the coordinates on $\partial \Omega$ associated with $\mathcal{E}$. First, to define the Poincaré map, {it is useful} to normalize the harmonic field. This is done by defining the following vector field on $\partial \Omega$:
\begin{equation}\label{eq:def_X}
    X(\mathcal{E}) = \frac{B(\Omega)}{B(\Omega)^\phi}.
\end{equation}
From this definition, and the fact that $B(\Omega)^\phi$ is positive, we know that the field lines of $X(\mathcal{E})$ correspond to the ones of $B(\Omega)$ up to an order-preserving reparametrization of time. Furthermore, we get that $X(\mathcal{E})^\phi = 1$, so that the field lines of $X(\mathcal{E})$ evolve linearly in $\phi$. This implies that if a field line starts on the poloidal cut $\phi=0$ at time $t=0$, it will return to the same cut at time $t=1$, which is precisely what we need for the Poincaré map. 
\\
We may therefore define the Poincaré map of $B(\Omega)$ as the one time flow of $X(\mathcal{E})$ restricted to the cut $\phi = 0$. However, it is more convenient to work on the fixed space $S^1$ in order to study variations of the Poincaré map. This can be done once again using the $(\phi, \theta)$ coordinates associated with $\mathcal{E}$. Let $S^1 \ni \phi \mapsto X_\phi(\mathcal{E}) \in \mathrm{Vec}(S^1)$ be the one-parameter family of vector fields given by 
\begin{equation}\label{eq:def_X_phi}
    X_\phi(\mathcal{E})(\theta) = X(\mathcal{E})^\theta(\phi, \theta) e_\theta,
\end{equation}
where $e_\theta$ is the canonical unit vector field of $S^1$. We then define the Poincaré map $\Pi(\mathcal{E})$ as 
\begin{equation}\label{eq:def_Pi}
    \Pi(\mathcal{E}) = \overrightarrow{\mathrm{exp}} \int_{0}^{1} X_\phi(\mathcal{E}) d\phi,
\end{equation}
which is a diffeomorphism of the circle. It will also prove to be useful to define the same flow at time $\phi$, which we denote by $\Pi^\phi(\mathcal{E})$. 

\section{Shape differentiation}\label{sec:shape_differentiation}
In this section, we consider an admissible embedding $\mathcal{E}$, $\Omega$ its corresponding domain, and $t \mapsto P_t$ a differentiable family of diffeomoprhisms of $\R^3$ with $P_0=id$. We denote
$$
V := \frac{d}{dt}_{\big| t=0} P_t,
$$
which is a smooth vector field of $\R^3$. Denoting $\mathcal{E}_t = P_t \circ \mathcal{E}$, our goal is to prove that $t \mapsto \Pi(\mathcal{E}_t)$ is differentiable in $\mathrm{Diff}(S^1)$. More precisely, we will identify a linear map $V \mapsto \Pi'(\mathcal{E}; V)$ such that 
$$
\Pi(\mathcal{E}_t) = \Pi(\mathcal{E}) + t \Pi'(\mathcal{E}; V) + o(t).
$$

\subsection{Shape differentiation of harmonic fields in the smooth category}\label{sub_sec:shape_diff_harmonic_fields}

Before studying the shape differentiability of the Poincaré map, we need to prove that $t \mapsto B(\Omega_t)$ is itself shape differentiable. Here, $\Omega$ is the domain associated with $\mathcal{E}$ and $\Omega_t = P_t(\Omega)$ is the one associated to $\mathcal{E}_t$. 

The classical approach for such problems, that is, shape differentiability of solutions to PDEs, is to define a certain way to pullback the solutions onto the fixed domain $\Omega$, and to use an implicit function argument on the pulled-back {weak} formulation \cite{henrot_shape_2018}[Chapter 5]. However, this only leads to shape differentiability in the variational space of the PDE, which in the case of \cref{prop:construction_B} is $H(\curl,\Omega)$. One therefore needs to use elliptic regularity results to obtain shape differentiability in the smooth category. This is done for example in \cite{henrot_shape_2018}[Section 5.5] for a Poisson problem with Neumann boundary conditions using the {weak} formulation restricted to $H^k$ spaces and an implicit function argument. We however, will estimate the difference between the solution to our PDE and its first-order approximation with respect to the deformation directly in $H^k$ norms, which in the end uses similar results of elliptic regularity.

Before stating the main result of this section, we note that the way in which we choose to pullback the harmonic field onto the fixed domain $\Omega$ affects the final result for the shape derivative formula. We will use two ways of pulling back $B_t$ onto $\Omega$. The first one is obtained by taking the pushforward by $P_t^{-1}$ of $B_t$ as a vector field. This has the advantage of preserving the field lines of $B_t$, which is precisely what we want in order to study the change in dynamics of the harmonic fields. The second one is obtained by taking the pullback by $P_t$ of $B_t$ when the latter is seen as a one-form on $\Omega_t$. This is given by the transformation $(P_t)^*_1$ which we will introduce in \cref{def:pullbacks}. Although this transformation does not preserve field lines, we will see that it behaves well with respect to the {weak} formulation for harmonic fields given in \cref{prop:construction_B}. Moreover, this transformation maps gradient fields to gradient fields, and curl-free fields to curl-free fields\footnote{In the language of differential forms; exact forms to exact forms, and closed forms to closed forms}. This property will be used in order to reduce the shape differentiability problem to elliptic regularity estimates on a classical PDE with scalar-valued solutions. Although we will not be using it here, another natural way to transform the harmonic fields is to take the pullback by $P_t$ when $B_t$ is seen as a two-form. This is what was done in \cite{robin_shape_2024} to prove shape differentiability of the harmonic field in a less regular context. We refer the reader to \cite{hiptmair_shape_2013} for elements of shape differentiation using the differential forms formalism. 

Throughout this section, we will often decompose vector fields in the canonical Cartesian basis. Moreover, if $u$ and $v$ are vector fields in a domain of $\R^3$, $Du$ is the Jacobian matrix field of $u$ in Cartesian coordinates, and $(Du) v$ is the matrix vector product in Cartesian coordinates, whenever these objects are well defined.

\begin{theorem}\label{th:differentiability_B}
    The mapping
    $$
    \begin{cases}
        \R \rightarrow \mathrm{Vec}\left(\bar{\Omega}\right) \\
        t \mapsto \left(P_t^{-1}\right)_* B_t
    \end{cases}
    $$
    is differentiable at zero, and its derivative is given by
    \begin{equation}\label{eq:B'}
        B'_V = [V, B_0] + \nabla u_V,
    \end{equation}
    where $u_V \in \mathcal{C}^\infty(\Omega)$ verifies 
    \begin{equation}\label{eq:u}
        \begin{cases}
            \Delta u_V = 0 & \text{in } \Omega, 
            \\
            \nabla u_V \cdot n = \div_{\Gamma}\left(B_0 \left(V\cdot n\right)\right) & \text{on } \partial \Omega.
        \end{cases}
    \end{equation}
\end{theorem}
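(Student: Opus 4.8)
The plan is to work entirely on the fixed domain $\Omega$ by pulling back the harmonic field $B_t = B(\Omega_t)$ via the two transformations alluded to in the text: the vector-field pushforward $(P_t^{-1})_* B_t$, whose derivative we want, and the one-form pullback $(P_t)^*_1 B_t$, which interacts well with the weak formulation of \cref{prop:construction_B}. First I would record the algebraic relation between these two pullbacks; since both are smooth families of bundle maps agreeing at $t=0$, there is a relation of the form $(P_t^{-1})_* B_t = A_t\, (P_t)^*_1 B_t$ for a smooth family of matrix fields $A_t$ with $A_0 = \Id$, so differentiability of one (with a known derivative) plus smoothness of $A_t$ yields differentiability of the other, and the two $t$-derivatives differ by an explicitly computable zeroth-order term built from $DV$. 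So the crux is to establish differentiability of $t \mapsto (P_t)^*_1 B_t$ in $\mathrm{Vec}(\bar\Omega)$ and compute its derivative.

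For that step I would use the weak formulation. Write $B_t = \tilde\nabla u_t$ with $u_t \in V_1(\Omega_t \setminus \Sigma_t)$ the zero-average solution of \cref{eq:B} on $\Omega_t$. Because $(P_t)^*_1$ sends exact forms to exact forms and is compatible with pullback of functions, $(P_t)^*_1 B_t$ is again a (globally-defined, curl-free) field of the form $\tilde\nabla \hat u_t$, where $\hat u_t := u_t \circ P_t \in V_1(\Omega\setminus\Sigma)$; the jump condition is preserved because $P_t$ fixes the homology class. Transporting the equation $\int_{\Omega_t}\nabla u_t\cdot\nabla v = 0$ through the change of variables $P_t$ produces a weak formulation for $\hat u_t$ of the form $\int_\Omega M_t \nabla \hat u_t \cdot \nabla \hat v = 0$ for all $\hat v \in H^1(\Omega)$, where $M_t = (\det DP_t)\, (DP_t)^{-1}(DP_t)^{-T}$ is a smooth family of symmetric positive-definite matrix fields with $M_0 = \Id$. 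This is a transmission-type Neumann problem on the fixed domain with a uniformly elliptic, smoothly $t$-dependent coefficient and a fixed affine constraint. Differentiating formally at $t=0$ gives that $\dot u := \frac{d}{dt}|_{0}\hat u_t$ should solve $\Delta \dot u = -\div(\dot M_0 \nabla u_0)$ in $\Omega$ with the induced Neumann datum and zero jump across $\Sigma$ — i.e. $\dot u \in H^1(\Omega)$, an honest scalar elliptic problem. I would then identify this with the $u_V$ of \cref{eq:u}: expanding $[V,B_0] = (DB_0)V - (DV)B_0$ and using $\curl B_0 = 0$, $\div B_0 = 0$, $B_0\cdot n = 0$ on $\partial\Omega$, one checks that $[V,B_0]$ is curl-free in $\Omega$ and has normal trace $\div_\Gamma(B_0(V\cdot n))$ on $\partial\Omega$ (a surface-divergence computation using that $B_0$ is tangent), so that $B'_V$ as defined in \cref{eq:B'} is exactly the Hodge-type correction making the derivative lie in $\mathcal{K}(\Omega)$'s "tangent space"; matching the weak formulations pins down $\nabla u_V$ as the gradient part.

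The main obstacle — and the reason the theorem is nontrivial — is upgrading this formal derivative to genuine differentiability \emph{in the smooth topology} $\mathrm{Vec}(\bar\Omega)$, i.e. in every $\mathcal{C}^k(\bar\Omega)$ norm, rather than merely in $H^1$ or $H(\curl)$. The strategy I would follow is the one the authors announce: form the remainder $R_t := \hat u_t - u_0 - t\,\dot u$, derive the elliptic problem it satisfies (its coefficient is $M_t$, and its right-hand side and boundary data are $O(t)$ in the appropriate sense plus a genuinely second-order-in-$t$ piece coming from Taylor expanding $M_t \mapsto M_0 + t\dot M_0 + O(t^2)$ against $\nabla u_0$ and $\nabla \dot u$), and then invoke elliptic regularity for the Neumann/transmission problem with smooth coefficients to bootstrap: $\|R_t\|_{H^1} = o(t)$ first, then $\|R_t\|_{H^{k}} = o(t)$ for all $k$ by iterating the a priori estimate $\|w\|_{H^{k+1}} \lesssim \|\div(M_t\nabla w)\|_{H^{k-1}} + \|M_t\nabla w\cdot n\|_{H^{k-1/2}(\partial\Omega)} + \|w\|_{H^1}$, with constants uniform for small $t$. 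Sobolev embedding then converts $H^k$ control into $\mathcal{C}^k$ control for every $k$, giving differentiability in $\mathrm{Vec}(\bar\Omega)$. Two technical points need care: the smoothness of $u_0$ and $\dot u$ up to $\partial\Omega$ (this follows from the cited Hodge-theory regularity for harmonic fields and standard Neumann regularity, once one checks the compatibility/mean-value condition $\int_{\partial\Omega}\div_\Gamma(B_0(V\cdot n)) = 0$, which holds since it is a surface divergence on a closed surface), and handling the cut $\Sigma$ — but since $u_t$ and $\nabla\dot u$ extend across $\Sigma$ as smooth functions away from it and the jump is transported rigidly by $P_t$, the transmission problem reduces cleanly to a genuine elliptic problem on $\Omega$ after subtracting an explicit smooth "jump lift," so no new difficulty arises there. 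Finally I would transfer the $o(t)$ estimate back through the smooth matrix field $A_t$ to conclude the stated formula \cref{eq:B'} for $B'_V$.
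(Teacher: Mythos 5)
Your plan follows essentially the same route as the paper: pull back via the one-form transformation $(P_t)_1^*$, transport the weak formulation to the fixed domain with the coefficient $M_t=\det(DP_t)(DP_t)^{-1}(DP_t)^{-T}$ (the paper's $\alpha(t)$), identify the formal derivative, estimate the first-order remainder in every $H^k$ by uniform elliptic regularity, conclude by Sobolev embedding, and finally transfer back to the vector-field pushforward through the smooth algebraic relation between the two pullbacks (your matrix field $A_t=(DP_t)^{-1}(DP_t)^{-T}$ plays the role of the paper's \cref{lemma:change_of_pullback}). One side claim in your identification step is, however, false and should be removed: $[V,B_0]$ is \emph{not} curl-free in $\Omega$ (e.g.\ for $B_0$ locally constant, $\curl\bigl((B_0\cdot\nabla)V\bigr)=(B_0\cdot\nabla)\curl V\neq 0$ in general), and its normal trace is $-\div_\Gamma\bigl(B_0(V\cdot n)\bigr)$, not $+\div_\Gamma\bigl(B_0(V\cdot n)\bigr)$. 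The conceptual slip is that $B'_V$ is the Lagrangian (material) derivative and has no reason to lie in $\mathcal{K}(\Omega)$; only the Eulerian part $\nabla u_V=B'_V-[V,B_0]$ is harmonic, while the curl-free object produced by your construction is $\tfrac{d}{dt}\big|_0(P_t)_1^*B_t=\nabla u_V+\nabla(V\cdot B_0)$, which is an exact gradient. This does not sink the argument, because the identification you actually need — that the formal derivative of $\hat u_t$ is $u_V+V\cdot B_0$ up to a constant — follows from the weak-formulation matching you invoke (carried out explicitly in the paper via the integration-by-parts computation in \cref{prop:phi_t} using the formula for $\alpha'(0)B_0$), but the erroneous justification should be replaced by that computation.
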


In order to prove \cref{th:differentiability_B}, we define ways to pullback functions and vector fields of $\Omega_t$ onto the fixed domain $\Omega$. Although these transformations are quite common for shape differentiation of classical boundary value problems, we choose to introduce them in a way which clarifies their link with pullbacks of differential forms. This is what was done for example in \cite{robin_shape_2024}.
\begin{definition}\label{def:pullbacks}
    Let $v_0$ be in $L^2(\Omega_t)$ and $v_1$ in $L^2(\Omega_t)^3$. We define 
    \begin{align*}
        (P_t)_0^* v_0 &= v_0 \circ P_t,
        \\
        (P_t)_1^* v_1 &= D P_t^T v_1 \circ P_t.
    \end{align*}
\end{definition}

The proof of \cref{th:differentiability_B} will come in two steps. First, we will prove that $t \mapsto (P_t)_1^* B_t$ is shape differentiable. As we will see, this way of pulling back $B_t$ onto $\Omega$ behaves well with respect to the {weak} formulation given in \cref{prop:construction_B} to construct the harmonic fields. Also, using the transformations of \cref{def:pullbacks} and the aforementioned {weak} formulation, we will be able to write the difference between $t \mapsto (P_t)_1^* B_t$ and its first-order expansion at $t=0$ as the gradient of a function $\varphi_t$. Furthermore, we will show that $\varphi_t$ satisfies a linear elliptic PDE, on which we will use classical elliptic regularity results in order to prove that $\varphi_t$ is $o(t)$ in $\mathcal{C}^k$ for all $k$. The rest of the proof will then come by composing the correct transformations in order to recover $(P_t^{-1})_* B_t$ from $(P_t)^*_1 B_t$, and differentiating.

Before studying the shape differentiation of $B_t$, we give some useful properties of the pullbacks introduced in \cref{def:pullbacks}. We will be using the objects introduced in \cref{sub_sec:harmonic_fields} to define the {weak} formulation in $\Omega$ with an additional $t$ in subscript for the corresponding objects in the domain $\Omega_t$. We recall that $H(\curl, \Omega)$ is the space of square integrable vector fields of $\Omega$ which have square integrable curl.
\begin{lemma}\label{lemma:cd_pullbacks}
    Let $c$ be a real number. Then, the diagram
    \begin{equation}\label{eq:cd}
        \begin{tikzcd}
            {V_c(\Omega_t \backslash \Sigma_t)} & {H(\curl, \Omega_t)}
            \\
            {V_c(\Omega \backslash \Sigma)} & {H(\curl, \Omega)}
            \arrow["\tilde{\nabla}", from=1-1, to=1-2]
            \arrow["\tilde{\nabla}", from=2-1, to=2-2]
            \arrow["(P_t)_0^*", from=1-1, to=2-1]
            \arrow["(P_t)_1^*", from=1-2, to=2-2]
        \end{tikzcd}
    \end{equation}
    is commutative.
\end{lemma}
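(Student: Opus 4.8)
The plan is to reduce everything to the standard fact that pullback of differential forms commutes with the exterior derivative, once the bookkeeping of the cut surfaces has been sorted out. Throughout I would adopt the convention that the embedding defining the cutting surface $\Sigma_t$ of $\Omega_t$ is $\mathcal{F}_t = P_t \circ \mathcal{F}$, so that $\Sigma_t = P_t(\Sigma)$ and $P_t$ restricts to a diffeomorphism $\Omega \setminus \Sigma \to \Omega_t \setminus \Sigma_t$. The first step is to check that the two vertical arrows are well defined, i.e. that $(P_t)_0^*$ sends $V_c(\Omega_t \setminus \Sigma_t)$ into $V_c(\Omega \setminus \Sigma)$ and that $(P_t)_1^*$ sends $H(\curl, \Omega_t)$ into $H(\curl, \Omega)$. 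For the first, the key observation is that $\big((P_t)_0^* u\big) \circ \mathcal{F} = u \circ \mathcal{F}_t$, so the traces on $\{0\} \times D^2$ and $\{1\} \times D^2$ used to compute $[\![(P_t)_0^* u]\!]_{\Sigma}$ are literally the same as those used to compute $[\![u]\!]_{\Sigma_t}$; hence the jump is unchanged and equals $c$. For the second, I would note that under the musical identification of a vector field $w$ with the $1$-form $w^\flat$, the transformation $(P_t)_1^*$ is exactly the pullback of $1$-forms, so that $\curl\big((P_t)_1^* w\big)$ corresponds to $d\big(P_t^*(w^\flat)\big) = P_t^*\big(d(w^\flat)\big)$, which is $L^2$ whenever $d(w^\flat)$ is ($2$-form pullback only introduces smooth, bounded cofactor coefficients of $DP_t$, and $P_t$ is a diffeomorphism of bounded domains); since $d(w^\flat)$ corresponds to $\curl w$, well-definedness follows.

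The heart of the argument is the commutation of the square, which I would establish by working on $\Omega \setminus \Sigma$ first. There, $\tilde{\nabla} u$ restricts to the genuine gradient $\nabla u$ and $\tilde{\nabla}\big((P_t)_0^* u\big)$ restricts to $\nabla(u \circ P_t)$, and the classical chain rule gives $\nabla(u \circ P_t) = DP_t^T \, (\nabla u) \circ P_t = (P_t)_1^*(\nabla u)$ — equivalently, this is the statement that $d$ commutes with pullback on $0$-forms. Hence $\tilde{\nabla}\big((P_t)_0^* u\big)$ and $(P_t)_1^*\big(\tilde{\nabla} u\big)$ agree almost everywhere on $\Omega \setminus \Sigma$. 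Since $\Sigma$ has zero Lebesgue measure, they agree almost everywhere on $\Omega$, and as both are elements of $L^2(\Omega)^3$ (indeed of $H(\curl, \Omega)$, by the previous paragraph or by \cref{prop:construction_B} together with the cited result of \cite{amrouche_vector_1998}), they are equal. This gives commutativity of \cref{eq:cd}.

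I do not expect a serious obstacle here; the only point requiring genuine care is the orientation and labelling of the two sides of the cut, which is precisely why I would fix the convention $\mathcal{F}_t = P_t \circ \mathcal{F}$ at the outset — with this choice the identification of the jumps is immediate and no sign discrepancy can arise. A secondary, purely routine, matter is to keep the interpretation of $\tilde{\nabla}$ consistent: as an element of $L^2(\Omega)^3$ it is just $\nabla u$, defined off the measure-zero set $\Sigma$, the tilde merely recording membership in $H(\curl, \Omega)$, so that the almost-everywhere identity obtained on $\Omega \setminus \Sigma$ is genuinely an identity of the objects appearing in the diagram.
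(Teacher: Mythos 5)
Your proposal is correct and follows essentially the same route as the paper: the jump computation via $\mathcal{F}_t = P_t\circ\mathcal{F}$ is the paper's argument verbatim (the paper obtains $[\![(P_t)_0^* u]\!]_{\Sigma} = [\![u]\!]_{\Sigma_t}\circ P_t$, hence the constant $c$ is preserved), while the well-definedness of $(P_t)_1^*$ on $H(\curl,\cdot)$ and the commutativity of the square are delegated in the paper to \cite{robin_shape_2024}[Proposition 4], whose content is precisely the chain-rule/pullback-commutes-with-$d$ computation you write out. No gap.
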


\begin{proof}
    As was mentioned earlier, the horizontal arrows of (\ref{eq:cd}) are given by \cite{amrouche_vector_1998}[Lemma 3.11]. We thus begin by proving that the vertical arrows are well-defined. 
    For the first arrow, take $u$ in $H^1(\Omega_t \backslash \Sigma_t)$. It is then straightforward that $\Phi_{tV}^{0}u = u \circ P_t$ is in $H^1(\Omega \backslash \Sigma)$. Furthermore, we have using $\mathcal{F}_t = P_t \circ \mathcal{F}$
    \begin{align*}
        [\![(P_t)_0^* u]\!]_{\Sigma} &= \left(\left((P_t)_0^* u \circ \mathcal{E}\right)_{|D^2 \times \{1\}}\right) \circ \mathcal{E}^{-1} - \left(\left((P_t)_0^* u \circ \mathcal{E}\right)_{|D^2 \times \{0\}}\right) \circ \mathcal{E}^{-1}
        \\
        &= \left[ \left(\left(u \circ \mathcal{E}_t\right)_{|D^2 \times \{1\}}\right) \circ \mathcal{E}_t^{-1} - \left(\left(u \circ \mathcal{E}_t\right)_{|D^2 \times \{0\}}\right) \circ \mathcal{E}_t^{-1} \right] \circ P_t
        \\
        &= [\![u]\!]_{\Sigma_t} \circ P_t,
    \end{align*}
    so that if $u$ is in $V_c(\Omega_t \backslash \Sigma_t)$, $(P_t)_0^* u$ is in $V_c(\Omega \backslash \Sigma)$. To prove that $(P_t)_1^*$ maps $H(\curl, \Omega_t)$ to $H(\curl, \Omega)$ and that the diagram is commutative, the computations are exactly the same as in the proof of \cite{robin_shape_2024}[Proposition 4], so we do not give the details here.
\end{proof}

\begin{lemma}\label{lemma:alpha}
    Let $u$ and $v$ be in $L^2(\Omega_t)^3$. Then 
    $$
    \int_{\Omega_t} u \cdot v = \int_{\Omega} (\alpha(t)(P_t)_1^* u)\cdot((P_t)_1^* v),
    $$
    where $\alpha(t) = \det\left(DP_t\right)DP_t^{-1}DP_t^{-T}$. Furthermore $t \in \R \mapsto \alpha(t) \in \mathcal{C}^\infty\left(\bar{\Omega}, \mathcal{M}_3(\R)\right)$ is differentiable at zero, and its derivative verifies, for every $u$ in $\mathrm{Vec}\left(\bar{\Omega}\right)$, 
    $$
    \alpha'(0)u = (\div u) V + \curl(u \times V) - \curl u \times V - \nabla(V \cdot u).
    $$
\end{lemma}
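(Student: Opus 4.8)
The plan is to verify the identity $\int_{\Omega_t} u\cdot v = \int_\Omega (\alpha(t)(P_t)_1^*u)\cdot((P_t)_1^*v)$ by a direct change of variables, and then to compute $\alpha'(0)$ by expanding $DP_t$ in $t$ and recognizing the resulting vector-field expression via standard differential-calculus identities. For the first part, I would perform the substitution $y = P_t(x)$ in the left-hand integral, so that $dy = \det(DP_t(x))\,dx$, and write $u(P_t(x)) = DP_t(x)^{-T}\big((P_t)_1^*u\big)(x)$ (which is just the definition $(P_t)_1^*u = DP_t^T\, u\circ P_t$ solved for $u\circ P_t$), and similarly for $v$. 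Then $u(P_t(x))\cdot v(P_t(x)) = \big(DP_t^{-T}(P_t)_1^*u\big)\cdot\big(DP_t^{-T}(P_t)_1^*v\big) = \big(DP_t^{-1}DP_t^{-T}(P_t)_1^*u\big)\cdot(P_t)_1^*v$, and multiplying by the Jacobian $\det(DP_t)$ gives exactly $\alpha(t) = \det(DP_t)\,DP_t^{-1}DP_t^{-T}$ acting on $(P_t)_1^*u$. This is essentially a one-line computation.

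For the differentiability and the formula for $\alpha'(0)$, I would use $P_t = \mathrm{id} + tV + o(t)$, hence $DP_t = \Id + t\,DV + o(t)$, $DP_t^{-1} = \Id - t\,DV + o(t)$, $DP_t^{-T} = \Id - t\,DV^T + o(t)$, and $\det(DP_t) = 1 + t\,\mathrm{tr}(DV) + o(t) = 1 + t\,\div V + o(t)$. Multiplying these out and keeping first-order terms in $t$ (all remainders being $o(t)$ in $\mathcal{C}^\infty(\bar\Omega,\mathcal{M}_3(\R))$ since $V$ is smooth and $t\mapsto P_t$ is a differentiable path of smooth diffeomorphisms), one obtains
$$
\alpha(t) = \Id + t\big((\div V)\Id - DV - DV^T\big) + o(t),
$$
so $\alpha'(0) = (\div V)\Id - DV - DV^T$ as a matrix field. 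Applied to a vector field $u$, this reads $\alpha'(0)u = (\div V)\,u - (DV)u - (DV^T)u$. It remains to check that this equals $(\div u)V + \curl(u\times V) - \curl u\times V - \nabla(V\cdot u)$.

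The last identification is the only place requiring care, and I expect it to be the main (though still routine) obstacle: it is a purely algebraic vector-calculus identity. I would expand the right-hand side using the standard formulas $\curl(u\times V) = (\div V)u - (\div u)V + (Dv... )$ — more precisely $\curl(u\times V) = (V\cdot\nabla)u - (u\cdot\nabla)V + (\div V)u - (\div u)V$ — together with $\nabla(V\cdot u) = (DV)^T u + (Du)^T V$ and $\curl u \times V = (Du)V - (Du)^T V$. Substituting, the terms $(\div u)V$ cancel, $(\div V)u$ survives, $(V\cdot\nabla)u = (Du)V$ cancels against $-\curl u\times V = -(Du)V + (Du)^T V$ up to the $(Du)^TV$ term, and $-(Du)^TV$ cancels against the $(Du)^TV$ from $-\nabla(V\cdot u)$; what remains on the right is $(\div V)u - (u\cdot\nabla)V - (DV)^T u = (\div V)u - (DV)u - (DV)^T u$, which is exactly $\alpha'(0)u$. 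I would present this as a short chain of identities rather than a term-by-term slog, and note that both sides are clearly $\mathcal{C}^\infty$ in $u$, which also gives the claimed regularity of $t\mapsto\alpha(t)$.
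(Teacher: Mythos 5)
Your proof is correct and follows essentially the same route as the paper: the first identity via the change of variables $y=P_t(x)$ together with the definition of $(P_t)_1^*$, the computation $\alpha'(0)=(\div V)\Id - DV - DV^T$ from the first-order expansion of $DP_t$, and the identification with the stated vector-field expression via the same three identities for $\nabla(V\cdot u)$, $\curl(u\times V)$ and $\curl u\times V$. No gaps.
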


\begin{proof}
    The first equality can be found by a simple change of variables, and algebraic manipulation. For the second statement, we first notice that $\R \ni t \mapsto DP_t \in \mathcal{C}^\infty\left(\bar{\Omega}, \mathcal{M}_3(\R)\right)$ is differentiable at zero, and its derivative is $DV$. Moreover, for $t=0$, $DP_t$ is constant on $\bar{\Omega}$ equal to the identity matrix. Also, $M \mapsto \det(M)$, $M \mapsto M^{-1}$ and $M \mapsto M^{-T}$ are differentiable at the identity. We deduce that $t \mapsto \alpha(t)$ is indeed differentiable at $0$. Its derivative is then given by
    \begin{align*}
        \alpha'(0) &= \mathrm{tr}\left(DV\right)\Id - DV - DV^T,
        \\
        &= \div(V)\Id - DV - DV^{T}.
    \end{align*}
    Given $u$ in $\mathrm{Vec}\left(\bar{\Omega}\right)$ and combining the identities 
    \begin{align*}
        \nabla(V \cdot u) &= DV^T u + Du^T V,
        \\
        \curl(u \times V) &= DuV - DVu - (\div u)V + (\div V)u,
        \\
        (Du - Du^T)V &= \curl u \times V,
    \end{align*}
    we find the desired formula.
\end{proof}

\begin{proposition}\label{prop:phi_t}
    Let $\varphi_t$ be defined as 
    $$
    \varphi_t = (P_t)_0^* u_t - u_0 - t(u_V + V\cdot B_0), \hspace{1ex} t \in \R
    $$
    Where $u_t$ is the solution to the {weak} formulation given in \cref{prop:construction_B} in the deformed domain $\Omega_t$. Then, $\varphi_t$ is a function in $H^1(\Omega)$ which solves 
    \begin{equation}\label{eq:PDE_phi_t}      
        \begin{cases}
            -\div(\alpha(t)\nabla \varphi_t) = \div\left(\alpha_1(t)B_0\right) + t\div\left(\alpha_0(t)\nabla(u_V + V \cdot B_0)\right) &\text{in } \Omega,
            \\
            (\alpha(t)\nabla \varphi_t) \cdot n = -\left(\alpha_1(t)B_0\right) \cdot n - t \left(\alpha_0(t)\nabla(u_V + V\cdot B_0)\right)\cdot n &\text{on } \partial \Omega,
        \end{cases}
    \end{equation}
    with $\alpha_0(t) = \alpha(t) - \Id$ and $\alpha_1(t) = \alpha(t) - \Id - t\alpha'(0)$.
\end{proposition}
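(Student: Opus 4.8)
The plan is to transport the weak formulation \eqref{eq:B} satisfied by $u_t$ on the moving domain $\Omega_t$ back onto the fixed domain $\Omega$, using the change of variables of \cref{lemma:alpha} and the commutation property of \cref{lemma:cd_pullbacks}, then substitute the ansatz $\hat u_t := (P_t)_0^* u_t = \varphi_t + u_0 + t(u_V + V\cdot B_0)$ and read off the equation satisfied by $\varphi_t$. The point of pulling $B_t = \tilde\nabla u_t$ back with $(P_t)_1^*$ is precisely \cref{lemma:cd_pullbacks}: one has $(P_t)_1^*\tilde\nabla u_t = \tilde\nabla\hat u_t$, so the scalar formulation survives transport and only picks up the weight $\alpha(t)$.

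More precisely, I would first note that $\hat u_t \in V_1(\Omega\backslash\Sigma)$ by \cref{lemma:cd_pullbacks}, so $\varphi_t = \hat u_t - u_0 - t(u_V + V\cdot B_0)$ has zero jump across $\Sigma$ (the jumps of $\hat u_t$ and $u_0$ are $1$, those of $u_V$ and $V\cdot B_0$ are $0$) and hence lies in $H^1(\Omega)$, with $\tilde\nabla\varphi_t = \nabla\varphi_t$. Applying \cref{lemma:alpha} to the pair $\tilde\nabla u_t$, $\nabla v$ with $v \in H^1(\Omega_t)$, then \cref{lemma:cd_pullbacks}, and using that $v \mapsto (P_t)_0^* v$ is a linear isomorphism of $H^1(\Omega_t)$ onto $H^1(\Omega)$, the relation $\int_{\Omega_t}\tilde\nabla u_t\cdot\nabla v = 0$ turns into $\int_\Omega \alpha(t)\,\tilde\nabla\hat u_t\cdot\nabla w = 0$ for all $w \in H^1(\Omega)$. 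Since $\tilde\nabla u_0 = B_0$ by \cref{prop:construction_B} while $\varphi_t$, $u_V$ and $V\cdot B_0$ have no jump, $\tilde\nabla\hat u_t = \nabla\varphi_t + B_0 + t\nabla(u_V + V\cdot B_0)$; inserting this and writing $\alpha(t) = \Id + t\alpha'(0) + \alpha_1(t)$ (and $\alpha(t) = \Id + \alpha_0(t)$ in the last terms) yields, for all $w$,
\begin{multline*}
\int_\Omega \alpha(t)\nabla\varphi_t\cdot\nabla w = -\int_\Omega B_0\cdot\nabla w - t\int_\Omega\bigl(\alpha'(0)B_0 + \nabla(u_V + V\cdot B_0)\bigr)\cdot\nabla w \\ - \int_\Omega \alpha_1(t)B_0\cdot\nabla w - t\int_\Omega\alpha_0(t)\nabla(u_V + V\cdot B_0)\cdot\nabla w .
\end{multline*}

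The crux --- and the step I expect to be the main obstacle --- is to show that the zeroth- and first-order terms on the right vanish identically. The zeroth-order term $\int_\Omega B_0\cdot\nabla w$ is $0$ because $B_0$ is divergence free and tangent to $\partial\Omega$ (equivalently, because $B_0 = \tilde\nabla u_0$ solves \eqref{eq:B}). For the first-order term I would use the formula of \cref{lemma:alpha} for $\alpha'(0)$ together with $\div B_0 = 0$ and $\curl B_0 = 0$ to get $\alpha'(0)B_0 = \curl(B_0\times V) - \nabla(V\cdot B_0)$, so the first-order integrand collapses to $G := \curl(B_0\times V) + \nabla u_V$. Then $\div G = \Delta u_V = 0$ in $\Omega$, while on $\partial\Omega$ a direct computation using that $B_0$ is tangent gives $\curl(B_0\times V)\cdot n = -\div_\Gamma\bigl(B_0(V\cdot n)\bigr)$, which is the opposite of $\nabla u_V\cdot n$ as prescribed by \eqref{eq:u}; hence $G$ is divergence free with vanishing normal trace, and an integration by parts gives $\int_\Omega G\cdot\nabla w = 0$ for all $w$. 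This is exactly the computation that forces the Neumann data in \eqref{eq:u}.

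After deleting these two terms, what remains is
\[
\int_\Omega\alpha(t)\nabla\varphi_t\cdot\nabla w = -\int_\Omega\alpha_1(t)B_0\cdot\nabla w - t\int_\Omega\alpha_0(t)\nabla(u_V + V\cdot B_0)\cdot\nabla w \qquad \text{for all } w \in H^1(\Omega),
\]
which is exactly the weak formulation of \eqref{eq:PDE_phi_t} (test against $w$, integrate by parts; the compatibility condition holds automatically by the divergence theorem). Hence $\varphi_t \in H^1(\Omega)$ solves \eqref{eq:PDE_phi_t} in the weak sense, and since the right-hand side data are smooth on $\bar\Omega$, elliptic regularity upgrades this to the stated identity. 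The remaining work is routine bookkeeping: the change-of-variables algebra behind \cref{lemma:alpha}, the surface integration-by-parts identity for $\curl(B_0\times V)\cdot n$, and tracking the jumps across $\Sigma$.
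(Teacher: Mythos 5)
Your proposal is correct and follows essentially the same route as the paper: pull back the weak formulation with $(P_t)_0^*$ and $(P_t)_1^*$ via \cref{lemma:cd_pullbacks} and \cref{lemma:alpha}, substitute the ansatz, kill the zeroth-order term by $L^2$-orthogonality of $B_0$ to gradients, and cancel the first-order term using the formula for $\alpha'(0)$ together with $\div B_0=\curl B_0=0$ and the Neumann data of $u_V$. The only cosmetic difference is that you package the first-order cancellation as a single divergence-free vector field $G=\curl(B_0\times V)+\nabla u_V$ with vanishing normal trace, whereas the paper exhibits the same cancellation as two boundary integrals obtained by integrating by parts against a smooth test function and then concluding by density.
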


\begin{proof}
    First, we know from the definition of $u_t$ and the commutativity of (\ref{eq:cd}) that $u_0$ and $(P_t)_0^*u_t$ are in $V_1(\Omega \backslash \Sigma)$. Therefore, $(P_t)_0^*u_t-u_0$ is in $V_0(\Omega \backslash \Sigma) \cong H^1(\Omega)$. We deduce that $\varphi_t$ is indeed in $H^1(\Omega)$. 
    
    Now, we identify a {weak} formulation for $(P_t)^*_0 u_t$ on the fixed domain $\Omega$. We recall that $u_t$ is a function of $V_1(\Omega_t \backslash \Sigma_t)$ such that for all $v$ in $H^1(\Omega_t)$
    $$
    \int_{\Omega_t} \nabla u_t \cdot \nabla v =0.
    $$
    Using (\ref{eq:cd}) and \cref{lemma:alpha}, we therefore find 
    $$
    \int_{\Omega} \left(\alpha(t)\left(\nabla (P_t)_0^* u_t\right)\right) \cdot \left(\nabla (P_t)_0^* v\right) = 0.
    $$
    Moreover, we can observe that $(P_t)_0^*$ is an isomorphism from $H^1(\Omega_t)$ to $H^1(\Omega)$ with inverse $(P_t^{-1})_0^*$. Therefore, for all $v$ in $H^1(\Omega)$, we have 
    \begin{equation}\label{eq:u_t_pullback}
        \int_{\Omega} \left(\alpha(t)\left(\nabla (P_t)_0^* u_t\right)\right) \cdot \nabla v = 0.
    \end{equation}
    Now, using $\varphi_t = (P_t)_0^* u_t - u_0 - t(u_V + V \cdot B_0)$, we have 
    \begin{align*}
        \int_{\Omega} \left(\alpha(t)\nabla \varphi_t\right) \cdot \nabla v =&  \int_{\Omega} \left(\alpha(t)\left(\nabla (P_t)_0^* u_t\right)\right) \cdot \nabla v - \int_{\Omega}\left(\alpha(t)\nabla u_0\right) \cdot \nabla v - t \int_{\Omega} \left(\alpha(t)\nabla\left(u_V + V \cdot B_0\right)\right) \cdot \nabla v
        \\
        =&- \int_{\Omega} \nabla u_0 \cdot \nabla v - t \int_{\Omega} \left(\alpha'(0)\nabla u_0\right) \cdot \nabla v - t\int_{\Omega} \nabla(u_V + V\cdot B_0)\cdot \nabla v
        \\
        & - \int_{\Omega} \left[\left(\alpha(t)-\Id-t\alpha'(0)\right) \nabla u_0 \right] \cdot \nabla v - t \int_{\Omega} \left[\left(\alpha(t)-\Id\right)\nabla(u_V + V \cdot B_0)\right] \cdot \nabla v,
    \end{align*}
    where we used \cref{eq:u_t_pullback}. Using $B_0 = \tilde{\nabla}u_0$ and the fact that $B_0$ is $L^2$ orthogonal to gradient vector fields, we obtain
    \begin{align*}
        \int_{\Omega} \left(\alpha(t)\nabla \varphi_t\right) \cdot \nabla v
        =& - t \int_{\Omega} \left(\alpha'(0)B_0\right) \cdot \nabla v - t\int_{\Omega} \nabla(u_V + V\cdot B_0)\cdot \nabla v
        \\
        & -\int_{\Omega} \left(\alpha_1(t) B_0 \right) \cdot \nabla v -t \int_{\Omega} \left(\alpha_0(t) \nabla(u_V + V \cdot B_0)\right) \cdot \nabla v.
    \end{align*}
    We now want to prove that the first two terms of the previous equation cancel out. Let us choose a smooth test function $v$ in $\mathcal{C}^\infty\left(\bar{\Omega}\right)$. Using the definition of $u_V$ given in \cref{eq:u}, we have 
    \begin{align*}
        \int_{\Omega} \nabla u_V \cdot \nabla v &= \int_{\partial \Omega} \div_\Gamma (B_0 (V \cdot n)) v
        \\
        &= -\int_{\partial \Omega} (B_0 \cdot \nabla_\Gamma v) V \cdot n 
        \\
        &= -\int_{\partial \Omega} (B_0 \cdot \nabla v) V \cdot n.
    \end{align*}
    Using \cref{lemma:alpha}, we also have 
    \begin{align*}
        \int_{\Omega}(\alpha'(0)B_0) \cdot \nabla v &= \int_{\Omega} (\div B_0)V \cdot \nabla v + \curl (B_0 \times V) \cdot \nabla v - (\curl B_0 \times V) \cdot \nabla v - \nabla (B_0 \cdot V) \cdot \nabla v
        \\
        &= \int_{\Omega} \curl(B_0 \times V) \cdot \nabla v - \nabla(B_0 \cdot V) \cdot \nabla v
        \\
        &= -\int_{\partial \Omega} \left[(B_0 \times V) \times n\right] \cdot \nabla v - \int_{\Omega}\nabla(B_0 \cdot V) \cdot \nabla v
        \\
        &= \int_{\partial \Omega} \left(B_0 \cdot \nabla v\right) V \cdot n - \int_{\Omega}\nabla(B_0 \cdot V) \cdot \nabla v,
    \end{align*}
    where we used the integration by parts formula for the curl, and the identity $(a \times b) \times c = (a \cdot {c}) {b} - (b \cdot c) a$. Therefore, 
    $$
    \int_{\Omega} \left(\alpha'(0)B_0\right) \cdot \nabla v + \int_{\Omega} \nabla(u_V + V \cdot B_0) \cdot \nabla v = 0, 
    $$
    which implies 
    $$
    \int_{\Omega} \left(\alpha(t)\nabla \varphi_t\right) \cdot \nabla v = -\int_{\Omega} \left(\alpha_1(t) B_0 \right) \cdot \nabla v -t \int_{\Omega} \left(\alpha_0(t)\nabla(u_V + V \cdot B_0)\right) \cdot \nabla v.
    $$
    Since $\mathcal{C}^\infty\left(\bar{\Omega}\right)$ is dense in $H^1(\Omega)$, the previous formula actually holds for any function $v$ in $H^1(\Omega)$. Finally, integrating by parts, we obtain 
    \begin{align*}
        \int_{\Omega} \left(\alpha(t)\nabla \varphi_t\right) \cdot \nabla v =& \int_{\Omega} \div(\alpha_1(t)B_0)v - \int_{\partial \Omega} \left[\left(\alpha_1(t)B_0\right) \cdot n \right] v + t\int_{\Omega}\div(\alpha_0(t)\nabla(u_V + V \cdot B_0)) v
        \\
        & - t\int_{\partial \Omega} \left[ \left(\alpha_0(t)\nabla(u_V + V\cdot B_0)\right) \cdot n \right] v,
    \end{align*}
    which is the {weak} formulation of the desired equation.
\end{proof}

Since $\alpha_0(t)$ and $\alpha_1(t)$ are {of order} $o(t)$ {as $t \rightarrow 0$}, we expect $\varphi_t$ to be as well. Classical results from elliptic regularity allows us to prove the following proposition.

\begin{proposition}\label{prop:ell_reg_phi_t}
    For all $k$ in $\N$, we have as $t$ goes to zero 
    $$
    \left\|\varphi_t - \fint_{\Omega} \varphi_t \right\|_{H^k(\Omega)} = o(t).
    $$
\end{proposition}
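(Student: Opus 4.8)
The plan is to read \cref{eq:PDE_phi_t} as a Neumann boundary value problem for the divergence-form elliptic operator $-\div\!\left(\alpha(t)\nabla\,\cdot\,\right)$ whose interior and boundary data are $o(t)$ in every Sobolev norm, and to apply elliptic regularity with constants that stay bounded as $t\to 0$. Set $\psi_t = \varphi_t - \fint_\Omega \varphi_t$. Since \cref{eq:PDE_phi_t} involves $\varphi_t$ only through $\nabla\varphi_t$, the function $\psi_t$ solves the same problem, has zero average, and it suffices to prove $\|\psi_t\|_{H^k(\Omega)} = o(t)$ for every $k$.

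First I would record uniform bounds on the coefficient. Since $\alpha(0) = \Id$ and, by \cref{lemma:alpha}, $t\mapsto\alpha(t)$ is differentiable — in particular continuous — into $\mathcal{C}^\infty\!\left(\bar\Omega,\mathcal{M}_3(\R)\right)$, there exist $t_0>0$, $\lambda>0$ and constants $M_k$ such that for $|t|\le t_0$ the matrix field $\alpha(t)$ is uniformly elliptic with constant $\lambda$ and $\|\alpha(t)\|_{\mathcal{C}^k(\bar\Omega)}\le M_k$ for every $k$. Differentiability of $\alpha$ at $0$ gives moreover $\|\alpha_1(t)\|_{\mathcal{C}^k(\bar\Omega)} = o(t)$ and $\|\alpha_0(t)\|_{\mathcal{C}^k(\bar\Omega)} = O(t)$ for every $k$. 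Elliptic regularity applied to \cref{eq:u} — the Neumann datum $\div_\Gamma\!\left(B_0(V\cdot n)\right)$ being smooth on $\partial\Omega$ and, as a surface divergence of a tangent field, of zero mean — shows $u_V\in\mathcal{C}^\infty(\bar\Omega)$, so that $W:=\nabla(u_V + V\cdot B_0)$ lies in $\mathrm{Vec}(\bar\Omega)$, as does $B_0$. Writing the data of \cref{eq:PDE_phi_t} in divergence form with $F_t := \alpha_1(t)B_0 + t\,\alpha_0(t)W$, the right-hand side is $\div F_t$ in $\Omega$ and the Neumann datum is $-F_t\cdot n$ on $\partial\Omega$; product and trace estimates then give $\|F_t\|_{H^k(\Omega)} = o(t)$ for every $k$, the $\alpha_1$ term contributing $o(t)$ and the $t\,\alpha_0$ term $O(t^2)$.

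The estimate itself then has two steps. For the base step, I would test the weak form of \cref{eq:PDE_phi_t} (the one established in the proof of \cref{prop:phi_t}) against $v = \psi_t$: uniform ellipticity gives $\lambda\|\nabla\psi_t\|_{L^2(\Omega)}^2 \le \|F_t\|_{L^2(\Omega)}\|\nabla\psi_t\|_{L^2(\Omega)}$, hence $\|\nabla\psi_t\|_{L^2(\Omega)} = o(t)$, and then $\|\psi_t\|_{H^1(\Omega)} = o(t)$ by the Poincaré–Wirtinger inequality on the connected domain $\Omega$. For the bootstrap, I would invoke the global $H^{k+1}$ regularity estimate for the Neumann problem with $\mathcal{C}^k$ coefficients on the smooth domain $\Omega$: there is a constant $C = C(k,\Omega,\lambda,M_k)$, independent of $t$ for $|t|\le t_0$, with
$$
\|\psi_t\|_{H^{k+1}(\Omega)} \le C\left(\|F_t\|_{H^k(\Omega)} + \|\psi_t\|_{L^2(\Omega)}\right).
$$
Combined with $\|F_t\|_{H^k(\Omega)} = o(t)$ and $\|\psi_t\|_{L^2(\Omega)} = o(t)$, this yields $\|\psi_t\|_{H^{k+1}(\Omega)} = o(t)$ for every $k$, which is the claim. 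If one prefers a form of the elliptic estimate carrying $\|\psi_t\|_{H^k(\Omega)}$ rather than $\|\psi_t\|_{L^2(\Omega)}$ on the right-hand side, the same conclusion follows by an immediate induction on $k$, the case $k=1$ being the energy estimate above.

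The only point genuinely requiring care is the uniformity of the elliptic regularity constants as $t\to 0$; this is exactly what the uniform ellipticity constant $\lambda$ and the uniform $\mathcal{C}^k$ bounds $M_k$ on $\alpha(t)$ — available because $t\mapsto\alpha(t)$ is continuous into $\mathcal{C}^\infty(\bar\Omega)$ — are there to guarantee, after which the classical theory (see e.g. \cite{henrot_shape_2018}[Section 5.5] and the references therein for Neumann problems) applies verbatim. The remaining ingredients — the product and trace estimates giving $F_t = o(t)$ in $H^k$, the smoothness of $u_V$ up to the boundary, and the energy estimate — are routine.
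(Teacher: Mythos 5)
Your proposal is correct and follows essentially the same route as the paper: uniform ellipticity and uniform $\mathcal{C}^k$ bounds on $\alpha(t)$ for small $t$, smoothness of $u_V$ by elliptic regularity, product and trace estimates showing the data of \cref{eq:PDE_phi_t} is $o(t)$ in every Sobolev norm (the $\alpha_1$ term being $o(t)$ and the $t\alpha_0$ term $O(t^2)$), and a $t$-uniform elliptic a priori estimate for the zero-average Neumann problem. The only differences are presentational — you package the data as a single divergence-form field $F_t$ and run an explicit energy-estimate base case plus induction, where the paper directly invokes the Grisvard $H^{k+2}$ estimate — and neither affects the validity of the argument.
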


Before proving this proposition, we prove an immediate corollary.

\begin{corollary}\label{cor:diff_phi_tV_B_t}
    The mapping
    $$
    \begin{cases}
        \R \rightarrow \mathrm{Vec}\left(\bar{\Omega}\right)
        \\
        t \mapsto (P_t)_1^* B_t
    \end{cases}
    $$ 
    is differentiable at zero, and its derivative is given by 
    $$
    \frac{d}{dt}_{\big{|}t=0} (P_t)_1^* B_t = \nabla u_V + \nabla(V \cdot B_0).
    $$
\end{corollary}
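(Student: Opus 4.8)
The plan is to deduce the corollary from \cref{lemma:cd_pullbacks}, \cref{prop:phi_t} and \cref{prop:ell_reg_phi_t} by pure assembly, with no new computation. First I would use the commutativity of the diagram (\ref{eq:cd}): since $B_t = \tilde\nabla u_t$ with $u_t \in V_1(\Omega_t \backslash \Sigma_t)$, the right square tells us that $(P_t)_1^* B_t = (P_t)_1^* \tilde\nabla u_t = \tilde\nabla\big((P_t)_0^* u_t\big)$. Then, from the very definition of $\varphi_t$ in \cref{prop:phi_t}, namely $(P_t)_0^* u_t = u_0 + t\,(u_V + V\cdot B_0) + \varphi_t$, and from the linearity of the extension operator $\tilde\nabla$, I would write
$$
(P_t)_1^* B_t = \tilde\nabla u_0 + t\,\tilde\nabla(u_V + V\cdot B_0) + \tilde\nabla \varphi_t .
$$

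Next I would identify each term. By \cref{prop:construction_B} we have $\tilde\nabla u_0 = B(\Omega) = B_0$. The functions $u_V + V\cdot B_0$ and $\varphi_t$ both belong to $H^1(\Omega) \cong V_0(\Omega \backslash \Sigma)$: for $\varphi_t$ this is exactly what is proved at the start of \cref{prop:phi_t}, while $u_V \in \mathcal{C}^\infty(\bar\Omega)$ by elliptic regularity for the Neumann problem \cref{eq:u} (whose data $\div_\Gamma(B_0(V\cdot n))$ is smooth on $\partial\Omega$ and of zero mean by the divergence theorem on the closed surface $\partial\Omega$), and $V\cdot B_0 \in \mathcal{C}^\infty(\bar\Omega)$ since $V$ and $B_0$ are smooth up to the boundary. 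On $V_0(\Omega \backslash \Sigma) \cong H^1(\Omega)$ the operator $\tilde\nabla$ is just the ordinary gradient $\nabla$, there being no jump across $\Sigma$ to extend; hence
$$
(P_t)_1^* B_t = B_0 + t\big(\nabla u_V + \nabla(V\cdot B_0)\big) + \nabla \varphi_t .
$$
Note in passing that the right-hand side and the claimed derivative are genuinely elements of $\mathrm{Vec}(\bar\Omega)$, and that $(P_t)_1^* B_t = DP_t^T\, B_t\circ P_t$ is smooth up to the boundary since $B_t \in \mathrm{Vec}(\bar\Omega_t)$ and $P_t$ is a diffeomorphism of $\R^3$.

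It then remains to show that $\nabla\varphi_t = o(t)$ in the topology of $\mathrm{Vec}(\bar\Omega)$, i.e.\ in every seminorm $\|\cdot\|_{\mathcal{C}^k(\bar\Omega)}$. Since $\nabla$ annihilates constants, \cref{prop:ell_reg_phi_t} gives $\|\nabla\varphi_t\|_{H^{k}(\Omega)} = o(t)$ for all $k$, and the Sobolev embedding $H^{k+2}(\Omega) \hookrightarrow \mathcal{C}^k(\bar\Omega)$, valid since $\Omega$ is a smooth bounded domain of $\R^3$, upgrades this to $\|\nabla\varphi_t\|_{\mathcal{C}^k(\bar\Omega)} = o(t)$ for all $k$. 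Thus $t \mapsto (P_t)_1^* B_t$ is differentiable at $0$ in $\mathrm{Vec}(\bar\Omega)$ with derivative $\nabla u_V + \nabla(V\cdot B_0)$, as claimed. The only subtleties are the two bookkeeping points — that $\tilde\nabla$ restricted to $H^1(\Omega)$ is the ordinary gradient, and the passage from the $H^k$ estimate of \cref{prop:ell_reg_phi_t} to a $\mathcal{C}^k$ estimate — so I do not expect any real obstacle here: all of the analytic work (the elliptic estimate) has already been isolated in \cref{prop:ell_reg_phi_t}, whose own proof is the genuinely demanding step and is deferred.
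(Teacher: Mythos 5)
Your proposal is correct and follows essentially the same route as the paper's own proof: both rest on the identity $\nabla\varphi_t = (P_t)_1^* B_t - B_0 - t(\nabla u_V + \nabla(V\cdot B_0))$ obtained from the commutativity of diagram (\ref{eq:cd}) and the definition of $\varphi_t$, followed by \cref{prop:ell_reg_phi_t} and Sobolev injections to upgrade the $H^k$ estimate to $\mathcal{C}^k(\bar\Omega)$. You merely spell out the bookkeeping (that $\tilde\nabla$ acts as the ordinary gradient on $V_0(\Omega\backslash\Sigma)\cong H^1(\Omega)$) that the paper leaves implicit.
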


\begin{proof}
    From \cref{prop:ell_reg_phi_t}, we have for each $k$ in $\N$
    $$
    \|\nabla \varphi_{t}\|_{H^k(\Omega)} = o(t).
    $$
    Now, using Sobolev injections, we deduce that for all $k$ in $\N$
    \begin{equation}\label{eq:C^k_estimate}
        \|\nabla \varphi_{t}\|_{\mathcal{C}^k\left(\bar{\Omega}\right)} = o(t).
    \end{equation}
    Finally, from the expression of $\varphi_t$ and the commutativity of diagram (\ref{eq:cd}), we get 
    $$
    \nabla \varphi_t = (P_t)_1^* B_t - B_0 - t(\nabla u_V + \nabla(V \cdot B_0)).
    $$
    We thus obtain the desired result from \cref{eq:C^k_estimate}.
\end{proof}

\begin{proof}[Proof of \cref{prop:ell_reg_phi_t}]
    Since $u_V$ solves 
    $$
    \begin{cases}
        \Delta u_V = 0 & \text{in } \Omega, 
        \\
        \nabla u_V \cdot n = \div_{\Gamma}\left(B_0 (V\cdot n)\right) & \text{on } \partial \Omega,
    \end{cases}
    $$
    and $B_0$ is smooth on $\partial \Omega$, we know that $u_V$ is smooth from elliptic regularity \cite{grisvard_elliptic_2011}[Theorem 2.5.1.1]. Thus, all the source terms (resp. boundary terms) of \cref{eq:PDE_phi_t} are smooth, and in particular are in $H^k(\Omega)$ (resp. $H^k(\partial \Omega)$) for all $k$. Now define 
    $$
    \tilde{\varphi}_t = \varphi_t - \fint_{\Omega} \varphi_t.
    $$
    For $\xi \in \R^3$ and $t$ small enough, we have 
    $$
    (\alpha(t)\xi) \cdot \xi = \det\left(DP_t\right) \left|DP_t^{{-T}}\xi\right|^2 \geq C |\xi|^2,
    $$
    where $C$ is positive and independent of $t$. Therefore, by Lax--Milgram, $\tilde{\varphi}_t$ is the unique zero average solution to \cref{eq:PDE_phi_t}. To shorten the notations, we define $\tilde{u}_V = u_V + V \cdot B_0$. From \cite{grisvard_elliptic_2011}[Section 2.5.1], we know that $\tilde{\varphi}_t$ is in $H^k(\Omega)$ for all $k$, and 
    \begin{align}\label{eq:ineq_regularity}
        \begin{split}
            \|\tilde{\varphi}_t\|_{H^{k+2}(\Omega)} \leq C_{k, t} &\left( \left\| \div(\alpha_1(t)B_0) \right\|_{H^k(\Omega)} + \left\|t \div(\alpha_0(t)\nabla \tilde{u}_V)\right\|_{H^k(\Omega)} + \right.
            \\
            &\left. \left\|(\alpha_1(t)B_0)\cdot n\right\|_{H^{k+1/2}(\partial \Omega)} + \left\|t(\alpha_0(t)\nabla \tilde{u}_V)\cdot n\right\|_{H^{k+1/2}(\partial \Omega)}\right).
        \end{split}
    \end{align}
    Furthermore, $t \mapsto \alpha(t)$ is uniformly bounded in $\mathcal{C}^k$ in any bounded interval containing zero. Since we are only interested in the behavior of $\tilde{\varphi}_t$ for small $t$, we may fix such an interval for the rest of the proof. Therefore, we deduce that the constant appearing in \cref{eq:ineq_regularity} may be chosen to be uniform in $t$. From the continuity of the trace from $H^{k+1}(\Omega)$ to $H^{k+1/2}(\partial \Omega)$, we then obtain 
    $$
    \|\tilde{\varphi}_t\|_{H^{k+2}(\Omega)} \leq C_k \left( \|\alpha_1(t) B_0\|_{H^{k+1}(\Omega)} + \|t\alpha_0(t) \nabla \tilde{u}_V\|_{H^{k+1}(\Omega)}\right)
    $$
    Now, one easily checks by induction, that for $u \in \mathrm{Vec}\left(\bar{\Omega}\right)$ and $A \in \mathcal{C}^\infty\left(\bar{\Omega}, \mathcal{M}_3(\R)\right)$, we have 
    $$
    \|Au\|_{H^k(\Omega)} \leq C_k \|A\|_{W^{k, \infty}(\Omega)}\|u\|_{H^k(\Omega)}.
    $$
    Furthermore, by differentiability of $t \mapsto P_t$ in $\mathcal{C}^\infty$, we have
    \begin{align*}
        &\|\alpha_1(t)\|_{W^{k+1, \infty}(\Omega)} = \|\alpha(t)-\alpha(0)-t\alpha'(0)\|_{W^{k+1, \infty}(\Omega)} = o(t),
        \\
        &\|t\alpha_0(t)\|_{W^{k+1, \infty}(\Omega)} = \|t(\alpha(t)-\alpha(0))\|_{W^{k+1, \infty}(\Omega)} = o(t),
    \end{align*}
    so that 
    $$
    \|\tilde{\varphi}_t\|_{H^{k+2}} = o(t),
    $$
    as claimed.
\end{proof}

Now that we have found the derivative of $t \mapsto (P_t)_1^* B_t$, we need to relate it to the derivative of $t \mapsto (P_t^{-1})_*B_t$. This is achieved using the following lemma.

\begin{lemma}\label{lemma:change_of_pullback}
    Let $X$ be a vector field in $\mathrm{Vec}\left(\bar{\Omega}\right)$. We have 
    $$
    \frac{d}{dt}_{\big{|}t=0} (P_t ^{-1})_* \left((P_t^{-1})^*_1 X\right) = [V, X] - \nabla(V \cdot X) - \curl X \times V.
    $$
\end{lemma}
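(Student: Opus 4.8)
The plan is to reduce the statement to an explicit pointwise matrix computation and then differentiate, reusing the identities already assembled in the proof of \cref{lemma:alpha}. First I would unwind the two operations. Applying \cref{def:pullbacks} to the diffeomorphism $P_t^{-1}$ gives $(P_t^{-1})^*_1 X = D(P_t^{-1})^T\,(X \circ P_t^{-1})$, a vector field on $\Omega_t$. Pushing it forward by $P_t^{-1}\colon \Omega_t \to \Omega$ amounts to evaluating at $P_t(x)$ and multiplying by $D(P_t^{-1})(P_t(x)) = DP_t(x)^{-1}$; since moreover $D(P_t^{-1})(P_t(x))^T = DP_t(x)^{-T}$ and $X \circ P_t^{-1} \circ P_t = X$, the base-point compositions cancel and one obtains the pointwise formula
\[
(P_t^{-1})_*\big((P_t^{-1})^*_1 X\big) = DP_t^{-1}\,DP_t^{-T}\,X .
\]
Comparing with $\alpha(t) = \det(DP_t)\,DP_t^{-1}DP_t^{-T}$ from \cref{lemma:alpha}, this is exactly $\det(DP_t)^{-1}\,\alpha(t)\,X$. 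Since $X \in \mathrm{Vec}(\bar\Omega)$ and $P_t$ is a diffeomorphism of $\R^3$, this is a smooth vector field on $\bar\Omega$ depending on $t$.

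Next I would differentiate at $t=0$. The map $t \mapsto DP_t$ is differentiable in $\mathcal{C}^\infty(\bar\Omega, \mathcal{M}_3(\R))$ with $DP_0 = \Id$ and derivative $DV$; hence $t \mapsto \det(DP_t)^{-1}$ is differentiable with value $1$ at $t=0$ and derivative $-\mathrm{tr}(DV) = -\div V$, and $t \mapsto \alpha(t)$ is differentiable by \cref{lemma:alpha} with $\alpha(0)X = X$ and
\[
\alpha'(0)X = (\div X)V + \curl(X \times V) - \curl X \times V - \nabla(V \cdot X).
\]
The product rule then yields
\[
\frac{d}{dt}_{\big|t=0}\, (P_t^{-1})_*\big((P_t^{-1})^*_1 X\big) = -(\div V)X + (\div X)V + \curl(X \times V) - \curl X \times V - \nabla(V \cdot X).
\]

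Finally I would simplify the right-hand side with the identities already used for \cref{lemma:alpha}, namely $\curl(X \times V) = (DX)V - (DV)X - (\div X)V + (\div V)X$ together with $[V,X] = (DX)V - (DV)X$, so that $\curl(X \times V) = [V,X] - (\div X)V + (\div V)X$. Substituting, the two $(\div X)V$ terms cancel and so do the two $(\div V)X$ terms, leaving $[V,X] - \nabla(V \cdot X) - \curl X \times V$, as claimed. I do not expect any genuine obstacle: the only delicate point is the bookkeeping of the pushforward/pullback conventions for $P_t^{-1}$ — in particular checking that the base-point compositions cancel so that $(P_t^{-1})_*\big((P_t^{-1})^*_1 X\big)$ is literally the pointwise multiplication operator $DP_t^{-1}DP_t^{-T}$ — after which the differentiation is exactly the one carried out in \cref{lemma:alpha}.
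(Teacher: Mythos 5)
Your proof is correct, but it takes a genuinely different route from the paper's. The paper expands each of the three operations to first order separately: it extends $X$ off $\bar{\Omega}$, writes $X \circ P_t^{-1} = X - t(DX)V + o(t)$ and $D(P_t^{-1})^T = I - tDV^T + o(t)$ to get $(P_t^{-1})^*_1 X = X - t\left(\nabla(V\cdot X) + \curl X \times V\right) + o(t)$, and then applies the pushforward expansion $(P_t^{-1})_* Y_t = Y_t + t[V,Y_t] + o(t)$, so that the Lie bracket enters through the pushforward. You instead observe that the composite operation is purely pointwise: the base-point compositions cancel and $(P_t^{-1})_*\bigl((P_t^{-1})^*_1 X\bigr) = DP_t^{-1}DP_t^{-T}X = \det(DP_t)^{-1}\alpha(t)X$, after which the whole computation reduces to the product rule and the formula for $\alpha'(0)$ already established in \cref{lemma:alpha}; the bracket $[V,X]$ then emerges at the end from the identity for $\curl(X\times V)$ rather than from the pushforward. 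Your route buys two small things: it avoids extending $X$ to $\R^3$ (no genuine composition with $P_t^{-1}$ survives) and it avoids having to justify the first-order expansion of the pushforward, at the cost of relying on the observation that the composite operator is exactly the matrix $\det(DP_t)^{-1}\alpha(t)$ from \cref{lemma:alpha} --- a structural fact the paper's proof does not surface. Your sign and identity bookkeeping checks out: with $[V,X] = (DX)V - (DV)X$ and $\curl(X\times V) = (DX)V - (DV)X - (\div X)V + (\div V)X$, the divergence terms cancel against $-(\div V)X$ and one lands on $[V,X] - \nabla(V\cdot X) - \curl X \times V$ as claimed.
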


\begin{proof}
    First, we extend $X$ to a smooth vector field of $\R^3$, which we also denote $X$. Now, we compute 
    $$
    (P_t^{-1})_1^* X = D(P_t^{-1})^T X \circ P_t^{-1}.
    $$
    We have 
    \begin{align*}
        X \circ P_t^{-1} &= X - t(DX) V + o(t),
        \\
        D(P_t^{-1})^T &= I - tDV^T + o(t),
    \end{align*}
    so that 
    $$
    (P_t^{-1})^*_1 X = X - t\left(\left(DV^T\right) X + (DX) V\right) { + o(t)}.
    $$
    Now, combining the identities 
    \begin{align*}
        \nabla(V \cdot X) &= \left(DV^T\right) X + \left(DX^T\right) V,
        \\
        \curl X \times V &= \left(DX - DX^T\right)V,
    \end{align*}
    we obtain 
    $$
    (P_t^{-1})^*_1 X = X - t(\nabla(V \cdot X) + \curl X \times V) + o(t).
    $$
    Finally, defining $Y_t = (P_t^{-1})^*_1 X$ and using
    $$
    (P_t^{-1})_* Y_t = Y_t+ t[V, Y_t] + o(t),
    $$
    we obtain the desired formula.
\end{proof}

We have now all the ingredients to prove the main result of this section.
\begin{proof}[Proof of \cref{th:differentiability_B}]
    We have 
    \begin{align*}
        (P_t^{-1})_* B_t = (P_t^{-1})_* \left((P_t^{-1})^*_1(P_t)^*_1 B_t\right).
    \end{align*}
    Using \cref{cor:diff_phi_tV_B_t,lemma:change_of_pullback}, we thus know that $\R \ni t \mapsto (P_t^{-1})_* B_t \in \mathrm{Vec}\left(\bar{\Omega}\right)$ is differentiable at zero by composition of differentiable maps. Furthermore, its derivative is given by 
    \begin{align*}
        \frac{d}{dt}_{\big{|}t=0} (P_t^{-1})_* B_t &= \frac{d}{dt}_{\big{|}t=0} (P_t)_1^* B_t +[V, B_0] - \nabla(V \cdot B_0) - \curl B_0 \times V
        \\
        &= \nabla u_V + \nabla(V \cdot B_0) + [V, B_0] - \nabla(V \cdot B_0)
        \\
        &= [V, B_0] + \nabla u_V,
    \end{align*}
    as claimed.
\end{proof}

\begin{remark}
    Although it will be simpler to work with paths of diffeomoprhisms to obtain shape differentiability of the Poincaré map, we note that all the techniques used in this section for the shape differentiability of harmonic fields work in a Fréchet differentiability context. That is, we could obtain estimates of the form 
    $$
    \left(I + V\right)^{-1}_* B((I + V)\Omega) = B(\Omega) + [V, B_0] + \nabla u_V + o\left(\|V\|_{C^k}\right),
    $$
    for all $k$ in $\N$, $V$ being a smooth vector field of $\R^3$ in the unit ball of $\mathcal{C}^1(\R^3; \R^3)$.
\end{remark}

\subsection{Shape differentiation of the Poincaré map}\label{sub_sec:shape_diff_poinc}
Now that we have obtained the shape differentiability of the harmonic field, we proceed to compute the shape derivative of its Poincaré map. We denote with an additional $t$ subscript all the objects defined in \cref{sec:definitions} associated with the embedding $\mathcal{E}_t = P_t \circ \mathcal{E}$.
\begin{proposition}\label{prop:general_X'}
    For $|t|<\varepsilon$ sufficiently small, $\mathcal{E}_t$ is admissible and the mapping
    $$
    \begin{cases}
        (-\varepsilon, \varepsilon) \rightarrow \mathrm{Vec}(\partial \Omega) \\
        t \mapsto \left(P_t^{-1}\right)_* X_t
    \end{cases}
    $$
    is differentiable at zero{, where $X_t$ is the harmonic field normalized in the toroidal direction as defined by \cref{eq:def_X}}. Furthermore, if $(\partial_\phi, \partial_\theta)$ is positively oriented, the $\theta$ component of its derivative is given by
    \begin{equation}\label{eq:X'}
        \left(X'_V\right)^\theta = \frac{1}{\sqrt{g}\left(B_{0}^{\phi}\right)^2}B_V' \cdot B_0^\perp, 
    \end{equation}
    and we obtain the same formula with opposite sign if the orientation of the coordinates is reversed.
\end{proposition}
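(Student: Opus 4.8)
\emph{Strategy and admissibility.} The plan is to reduce everything to \cref{th:differentiability_B} via the identity $\left(P_t^{-1}\right)_* X_t = \widetilde{B}_t / \widetilde{B}_t^{\,\phi}$, where $\widetilde{B}_t := \left(P_t^{-1}\right)_* B_t$, then differentiate this quotient and translate the $\theta$-component into the stated intrinsic form. First, since $P_t$ is a diffeomorphism of $\R^3$ and $\mathcal{E}$ an embedding, $\mathcal{E}_t = P_t \circ \mathcal{E}$ is an embedding whose image bounds $\Omega_t = P_t(\Omega)$, diffeomorphic to $\Omega$ and hence a smooth toroidal domain; moreover $P_t$ is a homeomorphism of triples $(\bar\Omega, \partial\Omega, \Omega^c) \to (\bar\Omega_t, \partial\Omega_t, \Omega_t^c)$ with $(\mathcal{E}_t)_* = (P_t)_* \circ \mathcal{E}_*$, so the toroidal conditions of \cref{def:admissible_embeddings} transfer verbatim from $\mathcal{E}$ to $\mathcal{E}_t$. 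The induced coordinates are $(\phi_t, \theta_t) = (\phi, \theta) \circ P_t^{-1}$, so $\mathrm{d}\phi_t = \left(P_t^{-1}\right)^* \mathrm{d}\phi$, and a direct computation gives $B_t^{\phi_t} \circ P_t = \widetilde{B}_t^{\,\phi}$ on $\partial\Omega$. By \cref{th:differentiability_B}, $t \mapsto \widetilde{B}_t \in \mathrm{Vec}(\bar\Omega)$ is continuous with $\widetilde{B}_0 = B_0$, whose $\phi$-component is positive on the compact set $\partial\Omega$; hence $\widetilde{B}_t^{\,\phi} > 0$ on $\partial\Omega$ and $B_t^{\phi_t} > 0$ on $\partial\Omega_t$ for $|t| < \varepsilon$, so $\mathcal{E}_t$ is admissible and $X_t$, $\left(P_t^{-1}\right)_* X_t$ are well defined. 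Applying $\left(P_t^{-1}\right)_*(fX) = (f \circ P_t)\left(P_t^{-1}\right)_* X$ to $X_t = B_t / B_t^{\phi_t}$ together with the above identity gives $\left(P_t^{-1}\right)_* X_t = \widetilde{B}_t / \widetilde{B}_t^{\,\phi}$; in particular $\left(\left(P_t^{-1}\right)_* X_t\right)^\phi \equiv 1$, consistent with $X_0^\phi = 1$.

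\emph{Differentiation.} Each $\widetilde{B}_t$ is tangent to $\partial\Omega$ (since $B_t \cdot n_t = 0$), so restriction yields a curve $t \mapsto \widetilde{B}_t|_{\partial\Omega} \in \mathrm{Vec}(\partial\Omega)$ differentiable at zero with derivative $B'_V|_{\partial\Omega}$, which is therefore itself tangent to $\partial\Omega$; likewise $\widetilde{B}_t^{\,\phi} \in \mathcal{C}^\infty(\partial\Omega)$ is differentiable with value $B_0^\phi$ nowhere vanishing at $t=0$. Since inversion of nowhere-vanishing functions is smooth in $\mathcal{C}^\infty(\partial\Omega)$ and multiplication $\mathcal{C}^\infty(\partial\Omega) \times \mathrm{Vec}(\partial\Omega) \to \mathrm{Vec}(\partial\Omega)$ is continuous bilinear, composition gives differentiability of $t \mapsto \left(P_t^{-1}\right)_* X_t$ at zero, and the quotient rule yields
$$
X'_V = \frac{B'_V}{B_0^\phi} - \frac{(B'_V)^\phi}{(B_0^\phi)^2}\, B_0,
$$
so that $(X'_V)^\phi = 0$ and $(X'_V)^\theta = \dfrac{(B'_V)^\theta}{B_0^\phi} - \dfrac{B_0^\theta\,(B'_V)^\phi}{(B_0^\phi)^2}$.

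\emph{The intrinsic formula and the sign.} Writing $B_0 = B_0^\phi \partial_\phi + B_0^\theta \partial_\theta$ and $B'_V = (B'_V)^\phi \partial_\phi + (B'_V)^\theta \partial_\theta$ on $\partial\Omega$, one computes for any tangent vectors $a,b$, by expanding in the $(\partial_\phi, \partial_\theta)$ frame, $a \cdot (n \times b) = \det[a, n, b] = (a^\theta b^\phi - a^\phi b^\theta)\big((\partial_\phi \times \partial_\theta) \cdot n\big)$, where $(\partial_\phi \times \partial_\theta) \cdot n = \sqrt{g}$ when $(\partial_\phi, \partial_\theta)$ is positively oriented (and $-\sqrt{g}$ otherwise). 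Taking $a = B'_V$, $b = B_0$ gives $B'_V \cdot B_0^\perp = \sqrt{g}\big((B'_V)^\theta B_0^\phi - (B'_V)^\phi B_0^\theta\big)$, and dividing by $\sqrt{g}\,(B_0^\phi)^2$ recovers exactly the expression for $(X'_V)^\theta$ obtained above, which is \eqref{eq:X'}; reversing the orientation flips the overall sign, as claimed.

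\emph{Main obstacle.} Almost all the analytic content is already packaged in \cref{th:differentiability_B}, and the rest is bookkeeping; the one genuinely delicate point is the interface between $X_t$ — which lives on the moving surface $\partial\Omega_t$ and is normalized using the moving coordinate $\phi_t$ — and its pushforward onto the fixed $\partial\Omega$. The identity $B_t^{\phi_t} \circ P_t = \widetilde{B}_t^{\,\phi}$ is what simultaneously yields transversality of $\mathcal{E}_t$ and collapses the normalization into a clean quotient, and one must keep track of the surface orientation in the final scalar-triple-product identity in order to land on the stated sign convention.
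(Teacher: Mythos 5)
Your proof is correct and follows essentially the same route as the paper's: reduce to \cref{th:differentiability_B} via the identity $\left(P_t^{-1}\right)_* X_t = \widetilde{B}_t / \widetilde{B}_t^{\,\phi}$ (which also yields admissibility of $\mathcal{E}_t$ for small $t$), differentiate the quotient, and rewrite the $\theta$-component intrinsically. The only cosmetic difference is the last algebraic step, where you use the scalar triple product $B'_V \cdot (n \times B_0) = \sqrt{g}\left((B'_V)^\theta B_0^\phi - (B'_V)^\phi B_0^\theta\right)$ while the paper expands $B_0^\perp$ in the dual frame $\nabla_\Gamma \phi, \nabla_\Gamma \theta$ via \cref{eq:orthogonality_relations}; the two computations are identical in content.
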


Before proving \cref{prop:general_X'}, we introduce some geometrical notations for vector fields on the boundary. We denote by $\nabla_\Gamma \phi$ (resp. $\nabla_\Gamma \theta$) the vector field of $\partial \Omega$ dual to $d\phi$ (resp. $d\theta$). These are therefore defined by the relations
$$
\nabla_\Gamma \phi \cdot v = d\phi(v), \hspace{1ex} \nabla_\Gamma \theta \cdot v = d\theta(v),
$$
for all vectors $v$ which are tangent to $\partial \Omega$. Contrary to what the notations may suggest, these vector fields are not gradient vector fields, but are only $\curl_\Gamma$-free. This is similar to the fact that $d\phi$ and $d\theta$ are not differentials of global functions, but are closed one-forms. In coordinates we have 
\begin{equation}\label{eq:gradient_coordinates}
    \nabla_\Gamma \phi = g^{\phi \phi}\partial_\phi + g^{\phi \theta}\partial_\theta, \hspace{1ex} \nabla_\Gamma \theta = g^{\theta \phi} \partial_\phi + g^{\theta \theta} \partial_\theta.
\end{equation}
By definition, it is clear that $\nabla_\Gamma \phi$ is orthogonal to $\partial_\theta$, and that $\nabla_\Gamma \theta$ is orthogonal to $\partial_\phi$. Furthermore, if $(\partial_\phi, \partial_\theta)$ is a positively oriented frame on $\partial \Omega$, a straightforward computation in coordinates shows that 
\begin{equation}\label{eq:orthogonality_relations}
    \partial_\phi^\perp = \sqrt{g} \nabla_\Gamma \theta, \hspace{1ex} \partial_\theta^\perp = -\sqrt{g} \nabla_\Gamma \phi,
\end{equation}
with opposite signs if the orientation of the coordinates is reversed. Because of this dependence on orientation for the sign of orthogonal vectors in coordinates, we will often only treat the case where $(\partial_\phi, \partial_\theta)$ is positively oriented. Treating the other case is however a straightforward process, so we will often omit this technicality when writing the main results.

\begin{proof}[Proof of \cref{prop:general_X'}]
    For the first point of the proposition, we observe that $\mathcal{E}_t = P_t \circ \mathcal{E}$ automatically verifies the first two assumptions of \cref{def:admissible_embeddings}. We therefore only need to prove that $B_t^{\phi_t}$ is positive on $\partial \Omega$ for $t$ small enough. To do so, we note that since $\mathcal{E}_t = P_t \circ \mathcal{E}$, we have $\phi_t = \phi \circ P_t^{-1}$, so that 
    \begin{equation}\label{eq:pushforward_component}
        B_t^{\phi_t} = \left(\left(P_t^{-1}\right)_* B_t\right)^\phi { \circ P_t^{-1}}.
    \end{equation}
    We then deduce from the differentiability of $t \mapsto \left(P_t^{-1}\right)_* B_t$ in $\mathrm{Vec}\left(\bar{\Omega}\right)$ and the admissibility of $\mathcal{E}$ that $B_t^{\phi_t}$ is positive for small enough $t$, so that $\mathcal{E}_t$ is admissible.

    Now, using \cref{eq:pushforward_component}, we obtain 
    \begin{align*}
        \left(P_t^{-1}\right)_{*}X_t &= \left(P_t^{-1}\right)_{*}\left(\frac{B_t}{B_t^{\phi_t}}\right)
        \\
        &= \frac{\left(P_t^{-1}\right)_*B_t}{\left(\left(P_t^{-1}\right)_* B_t\right)^\phi}.
    \end{align*}
    Therefore, from the differentiability of $t \mapsto \left(P_t^{-1}\right)_*B_t$ given by \cref{th:differentiability_B} and the fact that $\left(\left(P_t^{-1}\right)_* B_t\right)^\phi$ is nowhere zero for small enough $t$, we obtain that $t \mapsto \left(P_t^{-1}\right) X_t$ is differentiable in $\mathrm{Vec}(\partial \Omega)$ at $t=0$, and its derivative at zero is given by
    $$
    X'_V = \frac{1}{\left(B_0^\phi\right)^2}\left(B_0^\phi B'_V - \left(B'_V\right)^\phi B_0\right).
    $$
    Now, using the fact that $(\partial_\phi, \partial_\theta)$ is positively oriented and \cref{eq:orthogonality_relations}, we obtain
    $$
    B_0^\perp = \sqrt{g} \left(B_0^\phi \nabla_\Gamma \theta - B_0^\theta \nabla_\Gamma \phi\right),
    $$
    so that
    \begin{align*}
        \left(X'_V\right)^\theta &= \frac{1}{\left(B_0^\phi\right)^2}\left(B_0^\phi \left(B'_V\right)^\theta - \left(B'_V\right)^\phi B_0^\theta \right)
        \\
        &= \frac{1}{\left(B_0^\phi\right)^2} \left(B_0^\phi \nabla_\Gamma \theta - B_0^\theta \nabla_\Gamma \phi\right) \cdot \left(\left(B'_V\right)^\phi \partial_\phi + \left(B'_V\right)^\theta \partial_\theta \right)
        \\
        &= \frac{1}{\sqrt{g}\left(B_0^\phi\right)^2} B_0^\perp \cdot B'_V.
    \end{align*}
\end{proof}

We are now able to prove that the Poincaré map is shape differentiable.
\begin{proposition}\label{prop:general_F'}
    The mapping 
    $$
    \begin{cases}
        (-\varepsilon, \varepsilon) \rightarrow \mathrm{Diff}(S^1)
        \\
        t \mapsto \Pi_t
    \end{cases}
    $$
    is differentiable at zero, and its derivative is given by 
    $$
    \Pi'(\mathcal{E};V)(\theta) = \int_{0}^{1} \mathcal{T}(\phi, \theta)(X'_V)^\theta(\phi, \Pi^\phi(\theta))d\phi,
    $$
    where 
    $$
    \mathcal{T}(\phi, \theta) = \exp\left(\int_\phi^1 \partial_\theta X_0^\theta\left(\phi', \Pi^{\phi'}(\theta)\right) d\phi'\right).
    $$
\end{proposition}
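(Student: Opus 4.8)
The plan is to reduce the statement to differentiable dependence on a parameter of the flow of a non-autonomous ODE on $S^1$, and to recover the formula by solving the linearized equation explicitly.

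\emph{Step 1: realizing $\Pi_t$ as the flow of the pushed-forward field.} Since the coordinates on $\partial\Omega_t$ are $(\phi_t,\theta_t) = (\phi,\theta)\circ P_t^{-1}$, and since the pushforward of a vector field by a diffeomorphism maps field lines to field lines, the diffeomorphism of $S^1 = \R/\Z$ obtained as the Poincaré map of $X_t$ in the coordinates $(\phi_t,\theta_t)$ is literally the same diffeomorphism as the one obtained from $\tilde{X}_t := (P_t^{-1})_* X_t$ in the coordinates $(\phi,\theta)$: tracing a field line through $P_t^{-1}$ sends the point of coordinates $(\phi_t,\theta_t)=(0,\theta_0)$ to the point of coordinates $(\phi,\theta)=(0,\theta_0)$, and the return points match in the same way. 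A short computation from $\phi_t = \phi\circ P_t^{-1}$ moreover gives $\tilde{X}_t^{\phi}\equiv 1$, so $\tilde X_t$ is a genuine $\phi$-parametrized family of vector fields on the $\theta$-circle: setting $\tilde X_{t,\phi}(\theta)=\tilde X_t^{\theta}(\phi,\theta)e_\theta$, we have $\Pi_t^\phi = \overrightarrow{\exp}\int_0^\phi \tilde X_{t,\phi'}\,d\phi'$, i.e.
\[
\partial_\phi \Pi_t^\phi(\theta) = \tilde X_t^\theta\big(\phi,\Pi_t^\phi(\theta)\big),\qquad \Pi_t^0 = \mathrm{id}_{S^1},\qquad \Pi_t = \Pi_t^1.
\]
By \cref{prop:general_X'}, $t\mapsto \tilde X_t^\theta\in\mathcal{C}^\infty(S^1\times S^1)$ is differentiable at $0$ with derivative $(X'_V)^\theta$, so we write $\tilde X_t^\theta = X_0^\theta + t(X'_V)^\theta + \rho_t$ with $\|\rho_t\|_{\mathcal{C}^k}=o(t)$ for every $k$, where $X_0$ is the unperturbed normalized field and $(X'_V)^\theta$ is smooth by \cref{th:differentiability_B,prop:general_X'}.

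\emph{Step 2: the candidate derivative.} Differentiating the ODE formally in $t$ at $t=0$, the derivative $W^\phi(\theta):=\left.\frac{d}{dt}\right|_{t=0}\Pi_t^\phi(\theta)$ should solve the scalar linear equation
\[
\partial_\phi W^\phi(\theta) = \partial_\theta X_0^\theta\big(\phi,\Pi^\phi(\theta)\big)\,W^\phi(\theta) + (X'_V)^\theta\big(\phi,\Pi^\phi(\theta)\big),\qquad W^0 = 0,
\]
with $\Pi^\phi=\Pi_0^\phi$. Variation of constants, with integrating factor $\exp\!\big(-\int_0^\phi \partial_\theta X_0^\theta(\phi',\Pi^{\phi'}(\theta))\,d\phi'\big)$, gives $W^\phi(\theta)=\int_0^\phi \exp\!\big(\int_{\phi'}^\phi \partial_\theta X_0^\theta(\phi'',\Pi^{\phi''}(\theta))\,d\phi''\big)(X'_V)^\theta(\phi',\Pi^{\phi'}(\theta))\,d\phi'$, which at $\phi=1$ is exactly the claimed formula with $\mathcal T(\phi,\theta)$ the transition factor from $\phi$ to $1$. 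Smoothness of $W^1$ in $\theta$ is immediate, all coefficients of the linear equation being smooth and $\Pi^\phi$ depending smoothly on $\theta$.

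\emph{Step 3: justification and $\mathcal{C}^\infty$ convergence.} Set $Z_t^\phi := \Pi_t^\phi - \Pi^\phi - tW^\phi$. By classical continuous dependence of flows on the vector field one has $\|\Pi_t^\phi-\Pi^\phi\|_{\mathcal{C}^k}=O(t)$ for every $k$, hence $Z_t^\phi = O(t)$ in every $\mathcal{C}^k$. Subtracting the three evolution equations of Steps 1--2 and Taylor-expanding $X_0^\theta$ and $(X'_V)^\theta$ around $\Pi^\phi(\theta)$, one finds that $Z_t$ obeys
\[
\partial_\phi Z_t^\phi = \big(\partial_\theta X_0^\theta(\phi,\Pi^\phi(\cdot)) + b_t(\phi,\cdot)\big)Z_t^\phi + \varepsilon_t(\phi,\cdot),\qquad Z_t^0 = 0,
\]
where $\|b_t\|_{\mathcal{C}^0}=O(t)$ and $\|\varepsilon_t\|_{\mathcal{C}^0}=o(t)$ — the latter combining $\rho_t$ with the quadratic Taylor remainders and using $Z_t=O(t)$. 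Grönwall's inequality then yields $\|Z_t^1\|_{\mathcal{C}^0}=o(t)$, and differentiating this equation repeatedly in $\theta$ and arguing inductively (the highest-order derivative $\partial_\theta^k Z_t$ again satisfies a linear equation with the same coefficient perturbed by $O(t)$ and an $o(t)$ source built from already-controlled lower-order derivatives) gives $\|Z_t^1\|_{\mathcal{C}^k}=o(t)$ for all $k$. Since each $\Pi_t$ is automatically a diffeomorphism, being the time-one flow of a smooth vector field on the compact manifold $S^1$, this establishes $\Pi_t = \Pi + tW^1 + o(t)$ in $\mathrm{Diff}(S^1)$, i.e. $\Pi'(\mathcal{E};V)=W^1$. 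The main obstacle is precisely this last step: the $\mathcal{C}^0$ Grönwall estimate is routine, but upgrading it to all $\mathcal{C}^k$ norms — so that convergence holds in $\mathrm{Diff}(S^1)$ and not merely pointwise — requires differentiating the remainder equation in $\theta$ and tracking that every new source term remains $o(t)$; this is where essentially all the (elementary) work lies, the genuine analytic input having already been provided by \cref{th:differentiability_B} and \cref{prop:general_X'}.
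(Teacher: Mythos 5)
Your proposal is correct and follows essentially the same route as the paper: both identify $\Pi_t^\phi(\theta)$ as the flow of $\big((P_t^{-1})_*X_t\big)^\theta$ on the fixed circle, linearize the ODE in $t$ using \cref{prop:general_X'}, and recover the formula from Duhamel's formula applied to the resulting linear equation. The only difference is that your Step 3 spells out, via Gr\"onwall and induction on $\theta$-derivatives, the differentiability of the flow with respect to the parameter, which the paper asserts directly as a consequence of the $\mathcal{C}^\infty$-differentiability of $t\mapsto X_t^{\theta_t}$.
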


\begin{proof}
    Let $x_t(\cdot)$ be the solution to 
    \begin{equation}\label{eq:theta_t}
        \frac{d}{d\phi}x_t(\phi) = X_t^{\theta_t}(\phi, x_t(\phi)),
    \end{equation}
    with $x_t(0) = \theta \in S^1$, so that $\Pi_t^\phi(\theta) = x_t(\phi)$. Using $\theta_t = \theta \circ P_t^{-1}$, we get 
    $$
    X_t^{\theta_t} = \left(\left(P_t^{-1}\right)_*X_t\right)^\theta,
    $$
    so that from \cref{prop:general_X'}, $t \mapsto X_t^{\theta_t}$ is differentiable in $\mathcal{C}^\infty\left(\T^2\right)$. As a consequence, $t \mapsto x_t(\cdot)$ is also differentiable, and we can write 
    \begin{align*}
        x_t(\phi) &= x^{(0)}(\phi) + t x^{(1)}(\phi) + o(t),
        \\
        X_t^{\theta_t}(\phi, \theta) &= X_0^\theta(\phi, \theta) + t \left(X'_V\right)^\theta(\phi, \theta) + o(t),
    \end{align*}
    Where $o(t)$ is here a shorthand for a function whose $\mathcal{C}^k$ norms on all compact subsets are $o(t)$. Since $\Pi_t(\theta) = x_t(1)$, we obtain that $t \mapsto \Pi_t$ is differentiable, and its derivative is given by $\Pi'(\mathcal{E};V) = x^{(1)}(1)$. Injecting the expansions for $X_t^{\theta_t}$ and ${x_t}$ in \cref{eq:theta_t}, we obtain 
    $$
    \frac{d}{d\phi}x^{(0)}(\phi) + t\frac{d}{d\phi}x^{(1)}(\phi) = X_0^\theta\left(\phi, x^{(0)}(\phi)\right) + t\left[\partial_\theta X_0^\theta\left(\phi, x^{(0)}(\phi)\right)x^{(1)}(\phi) + \left(X'_V\right)^\theta\left(\phi, x^{(0)}(\phi)\right)\right] + o(t),
    $$
    so that $x^{(1)}(\cdot)$ solves the following linear equation with a drift term
    $$
    \frac{d}{d\phi}x^{(1)}(\phi) = \partial_\theta X_0^\theta\left(\phi, \Pi_0^\phi(\theta)\right)x^{(1)}(\phi) + \left(X'_V\right)^\theta\left(\phi, \Pi_0^\phi(\theta)\right),
    $$
    with $x^{(1)}(0) = 0$. Using Duhamel's formula, we thus obtain 
    $$
    x^{(1)}(1) = \int_{0}^{1} e^{\int_{\phi}^{1}\partial_\theta X_0^\theta\left(\phi', \Pi_0^{\phi'}(\theta)\right)d\phi'} \left(X'_V\right)^\theta(\phi, \Pi_0^\phi(\theta)) d\phi,
    $$
    which is the desired result.
\end{proof}

In the case where $B_0$ is linearized in the $(\phi, \theta)$ coordinates, that is, $(B_0)_{|\partial \Omega} = \chi(\partial_\phi + \omega \partial_\theta)$ where $\chi$ is a smooth function of $\partial\Omega$ and $\omega$ is in $\R$, we have the following formulas for $(X'_V)^\theta$ and $\Pi'(\mathcal{E};V)$.

\begin{proposition}\label{prop:derivatives_linearizable}
    Suppose there exist $\chi$ in $\mathcal{C}^\infty(\partial \Omega)$ positive and a real number $\omega$ such that $(B_0)_{|\partial \Omega}= \chi(\partial_\phi + \omega \partial_\theta)$ with $(\partial_\phi, \partial_\theta)$ positively oriented. Let $\tilde n$ be a smooth extension of $n$ to $\R^3$ and $\vec{\omega}=(1, \omega)^T$. Decomposing $V \in \mathrm{Vec}(\R^3)$ as $V=f \tilde{n} + V_\Gamma$ where $(V_\Gamma)_{|\partial \Omega}$ is tangent to $\partial \Omega$, we have 
    \begin{equation}\label{eq:X'_linearized}
        (X'_V)^\theta = f\frac{2\mathbb{II}(B_0,B_0^\perp)}{\sqrt{g}\chi^2} + \left\langle \vec{\omega}, \nabla_{\T^2}\left(\omega V_\Gamma^\phi - V_\Gamma^\theta\right) \right\rangle + \frac{1}{\sqrt{g}\chi^2}B_0^\perp \cdot \nabla_\Gamma u_V,
    \end{equation}
    where $\mathbb{II}$ is the second fundamental form of $\partial \Omega$. Furthermore, we also have 
    $$
    \Pi'(\mathcal{E}; V)(\theta) = \int_{0}^{1}(X'_V)^\theta(\phi, \theta+\omega \phi)d\phi.
    $$
\end{proposition}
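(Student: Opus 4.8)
The plan is to start from the general formula \eqref{eq:X'} of \cref{prop:general_X'}, namely $(X'_V)^\theta = \frac{1}{\sqrt{g}(B_0^\phi)^2}\,B'_V\cdot B_0^\perp$, and to substitute $B_0^\phi=\chi$ together with the identity $B'_V=[V,B_0]+\nabla u_V$ from \cref{th:differentiability_B}. Since $B_0^\perp=n\times B_0$ is tangent to $\partial\Omega$, only the tangential part of $B'_V|_{\partial\Omega}$ contributes, so every normal component that appears can be discarded; in particular $\nabla u_V\cdot B_0^\perp=\nabla_\Gamma u_V\cdot B_0^\perp$, which already produces the last summand of \eqref{eq:X'_linearized}. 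It then remains to expand $[V,B_0]\cdot B_0^\perp$ along the splitting $V=f\tilde n+V_\Gamma$ and to match the two resulting pieces with the first two summands.

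For the tangential part I would write $B_0=\chi Z$ on $\partial\Omega$ with $Z=\partial_\phi+\omega\partial_\theta$ (constant in the $(\phi,\theta)$ coordinates), so that $B_0^\perp=\chi Z^\perp$. Using that the restriction to $\partial\Omega$ of the $\R^3$-bracket of two vector fields whose boundary traces are tangent coincides with the intrinsic bracket on $\partial\Omega$, one gets $[V_\Gamma,B_0]\cdot B_0^\perp=\chi^2\,[V_\Gamma,Z]\cdot Z^\perp$, the term proportional to $(V_\Gamma\chi)Z$ being killed by $Z\cdot Z^\perp=0$. A direct computation of $[V_\Gamma,Z]$ in coordinates, followed by \eqref{eq:orthogonality_relations} and the duality relations $d\phi(\partial_\phi)=d\theta(\partial_\theta)=1$, $d\phi(\partial_\theta)=d\theta(\partial_\phi)=0$, gives $[V_\Gamma,Z]\cdot Z^\perp=\sqrt{g}\,\langle\vec\omega,\nabla_{\T^2}(\omega V_\Gamma^\phi-V_\Gamma^\theta)\rangle$; dividing by $\sqrt{g}\chi^2$ yields the middle summand of \eqref{eq:X'_linearized}.

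The main obstacle is the normal piece $[f\tilde n,B_0]\cdot B_0^\perp$. Writing $[f\tilde n,B_0]=f[\tilde n,B_0]-(B_0 f)\tilde n$, the second term is normal and drops out, so one must evaluate $[\tilde n,B_0]\cdot B_0^\perp=(D_{\tilde n}B_0-D_{B_0}\tilde n)\cdot B_0^\perp$ on $\partial\Omega$, where $D$ denotes the ambient directional derivative. Differentiating $n\cdot B_0^\perp=0$ along $B_0$ shows $-D_{B_0}\tilde n\cdot B_0^\perp=\mathbb{II}(B_0,B_0^\perp)$. The delicate term $D_{\tilde n}B_0\cdot B_0^\perp$ is a \emph{normal} derivative of $B_0$; here the key point is to invoke $\curl B_0=0$, which makes the Jacobian $DB_0$ symmetric up to the boundary, so that $D_{\tilde n}B_0\cdot B_0^\perp=D_{B_0^\perp}B_0\cdot n=\mathbb{II}(B_0^\perp,B_0)$, now a purely tangential derivative. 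Summing the two contributions gives $[\tilde n,B_0]\cdot B_0^\perp=2\,\mathbb{II}(B_0,B_0^\perp)$ and hence, after multiplication by $f/(\sqrt{g}\chi^2)$, the first summand of \eqref{eq:X'_linearized}; the three pieces together prove \eqref{eq:X'_linearized}.

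Finally, for the formula for $\Pi'(\mathcal{E};V)$ I would feed the linearization hypothesis into \cref{prop:general_F'}: since $X_0=B_0/B_0^\phi=\partial_\phi+\omega\partial_\theta$, its $\theta$-component $X_0^\theta=\omega$ is constant, so $\partial_\theta X_0^\theta=0$, whence $\mathcal{T}(\phi,\theta)\equiv 1$, while the $\phi$-flow of $X_0^\theta$ is simply $\Pi^\phi(\theta)=\theta+\omega\phi$. Substituting these into the expression of \cref{prop:general_F'} gives $\Pi'(\mathcal{E};V)(\theta)=\int_0^1 (X'_V)^\theta(\phi,\theta+\omega\phi)\,d\phi$, as claimed.
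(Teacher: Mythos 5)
Your proof is correct and follows the same overall strategy as the paper's: the same three-way splitting of $\tfrac{1}{\sqrt{g}\chi^2}B_0^\perp\cdot\left([V,B_0]+\nabla u_V\right)$ along $V=f\tilde n+V_\Gamma$, the same coordinate computation of $[V_\Gamma,Z]\cdot Z^\perp$ via \cref{eq:orthogonality_relations} for the middle summand, and the same specialization of \cref{prop:general_F'} (with $X_0^\theta=\omega$ constant, hence $\mathcal{T}\equiv 1$ and $\Pi^\phi(\theta)=\theta+\omega\phi$) for the final formula. The one step where you genuinely diverge is the evaluation of $D_{\tilde n}B_0\cdot B_0^\perp$: you exploit that $\curl B_0=0$ makes the Jacobian of $B_0$ symmetric up to the boundary, trading the normal derivative for the tangential one $D_{B_0^\perp}B_0\cdot n=\mathbb{II}\left(B_0^\perp,B_0\right)$, whereas the paper instead expands $\nabla(B_0\cdot\tilde n)$ and picks the particular extension $\tilde n=\eta\nabla\sigma_{\partial\Omega}$ so that $\curl\tilde n=0$ near $\partial\Omega$. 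Your variant is marginally cleaner since it imposes no condition on the extension of the normal; both yield $B_0^\perp\cdot[\tilde n,B_0]=2\,\mathbb{II}\left(B_0,B_0^\perp\right)$ and hence the same first summand.
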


\begin{proof}
    From \cref{th:differentiability_B} and \cref{prop:general_X'}, we have 
    \begin{align}
        \left(X'_V\right)^\theta &= \frac{1}{\sqrt{g}\chi^2} B_0^\perp \cdot B_V' \nonumber
        \\
        &= \frac{1}{\sqrt{g}\chi^2} B_0^\perp \cdot \left([V, B_0] + \nabla u_V\right) \nonumber
        \\
        &= \frac{1}{\sqrt{g} \chi^2} B_0^\perp \cdot [f \tilde{n}, B_0] + \frac{1}{\sqrt{g} \chi^2} B_0^\perp \cdot [V_\Gamma, B_0] + \frac{1}{\sqrt{g}\chi^2} B_0^\perp \cdot \nabla_\Gamma u_V. \label{eq:decomp_X'}
    \end{align}
    For the first term of \cref{eq:decomp_X'}, we have 
    $$
    [f \tilde{n}, B_0] = f[\tilde{n}, B_0] - \left(B_0 \cdot \nabla f\right) \tilde{n},
    $$
    so that 
    $$
    B_0^\perp \cdot [f \tilde{n}, B_0] = f B_0^\perp \cdot [\tilde{n}, B_0].
    $$
    We note that since $B_0$ is in $\mathrm{Vec}\left(\bar{\Omega}\right)$ and $\Omega$ is a smooth domain, we may extend $B_0$ and $B_0^\perp$ to smooth vector fields of $\R^3$ when necessary. Now, denoting by $\nabla_X Y$ the covariant derivative of a vector field $Y$ in the direction $X$ in $\R^3$, and using the fact that the Levi--Civita connection is torsion free, we have
    $$
    [\tilde{n}, B_0] = \nabla_{\tilde{n}} B_0 - \nabla_{B_0} \tilde{n}.
    $$
    We also have 
    $$
    \nabla (B_0 \cdot \tilde{n}) = \nabla_{\tilde{n}} B_0 + \nabla_{B_0} \tilde{n} + \tilde{n} \times \curl B_0 + B_0 \times \curl \tilde{n}.
    $$
    It is straightforward to see that the tangential part of $[\tilde{n}, B_0]$ does not depend on the choice of extension of the normal, so that we may choose $\tilde{n}$ in a specific way. Since $\partial \Omega$ is smooth, we know that the signed distance to $\partial \Omega$ (which we denote $\sigma_{\partial \Omega}$) is smooth in a neighborhood $U$ of $\partial \Omega$. Let $K$ be a compact subset of $U$ containing a neighborhood of $\partial \Omega$, and $\eta$ be a smooth positive function which is equal to one in $K$, and has support included in $U$. Then, $\tilde{n} := \eta \nabla \sigma_{\partial \Omega}$ is smooth extension of the normal, and $\curl \tilde{n} = 0$ in $K$. Therefore, we have $\curl{\tilde{n}}=0$ on $\partial \Omega$. Furthermore, we also have $\curl B_0 = 0$ in $\bar{\Omega}$. As such, using that $B_0^\perp$ is tangent to $\partial \Omega$, and that $B_0 \cdot \tilde{n}$ is equal to zero on $\partial \Omega$, we have 
    $$
    B_0^\perp \cdot \nabla_{\tilde{n}} B_0 + B_0^\perp \cdot \nabla_{B_0}\tilde{n} = B_0^\perp \cdot \nabla(B_0 \cdot \tilde{n}) = 0,
    $$
    so that 
    $$
    B_0^\perp \cdot [\tilde{n}, B_0] = -2 B_0^\perp \cdot \nabla_{B_0} \tilde{n}.
    $$
    Now, using the fact that the Levi--Civita is compatible with the metric, we write 
    $$
    B_0 \cdot \nabla(B_0^\perp \cdot \tilde{n}) = \nabla_{B_0} B_0^\perp \cdot \tilde{n} + B_0^\perp \cdot \nabla_{B_0} \tilde{n}.
    $$
    Therefore, since $B_0 \cdot \nabla(B_0^\perp \cdot \tilde{n})$ vanishes on $\partial \Omega$, we have 
    \begin{align*}
        B_0^\perp \cdot \nabla_{B_0}\tilde{n} &= -\nabla_{B_0}B_0^\perp \cdot \tilde{n}
        \\
        &= -\mathbb{II}(B_0, B_0^\perp).
    \end{align*}
    We refer to \cite{lee_introduction_2018}[Section 8] for the definition of the second fundamental form of 1-codimensional manifolds using the Levi--Civita connection. As a consequence, the first term of \cref{eq:decomp_X'} is given by 
    $$
    \frac{1}{\sqrt{g}\chi^2} [f \tilde{n}, B_0] = f \frac{2 \mathbb{II}(B_0, B_0^\perp)}{\sqrt{g}\chi^2}.
    $$
    Now, we compute the second term of \cref{eq:decomp_X'}. Since $V_\Gamma$ and $B_0$ are tangent vector fields, the tangential part of $[V_\Gamma, B_0]$ is given by the Lie bracket of $V_\Gamma$ and $B_0$ as vector fields of $\partial \Omega$, which we denote $[V_\Gamma, B_0]_{\partial \Omega}$. We have $B_0 = \chi(\partial_\phi + \omega \partial_\theta) = \chi X_0$, so that 
    $$
    [V_\Gamma, B_0]_{\partial \Omega} = V_\Gamma \cdot \left(\nabla_\Gamma \chi\right) X_0 + \chi [V_\Gamma, X_0]_{\partial \Omega}.
    $$
    Since $X_0$ is collinear to $B_0$ it is orthogonal to $B_0^\perp$ which implies
    $$
    B_0^\perp \cdot [V_\Gamma, B_0] = \chi B_0^\perp \cdot [V_\Gamma, X_0]_{\partial \Omega}.
    $$
    Now, we write in coordinates 
    \begin{align*}
        X_0 &= \partial_\phi + \omega \partial_\theta,
        \\
        V_\Gamma &= V_\Gamma^\phi \partial_\phi + V_\Gamma^\theta \partial_\theta,
    \end{align*}
    which gives us 
    $$
    [V_\Gamma, X_0]_{\partial \Omega} = -(\partial_\phi V_\Gamma^\phi + \omega \partial_\theta V_\Gamma^\phi) \partial_\phi - (\partial_\phi V_\Gamma^\theta + \omega \partial_\theta V_\Gamma^\theta)\partial_\theta.
    $$
    Finally, using $B_0^\perp = \sqrt{g}\chi(\nabla_\Gamma \theta - \omega \nabla_\Gamma \phi)$, we get 
    \begin{align*}
        B_0^\perp \cdot [V_\Gamma, B_0] &= -\sqrt{g}\chi^2 (\nabla_\Gamma \theta - \omega \nabla_\Gamma \phi) \cdot \left[(\partial_\phi V_\Gamma^\phi + \omega \partial_\theta V_\Gamma^\phi) \partial_\phi + (\partial_\phi V_\Gamma^\theta + \omega \partial_\theta V_\Gamma^\theta)\partial_\theta\right]
        \\
        &= -\sqrt{g}\chi^2 \left(\partial_\phi V_\Gamma^\theta + \omega \partial_\theta V_\Gamma^\theta - \omega \partial_\phi V_\Gamma^\phi - \omega^2 \partial_\theta V_\Gamma^\phi\right),
        \\
        &= \sqrt{g}\chi^2 \left\langle{\vec{\omega}, \nabla_{\T^2}\left(\omega V_\Gamma^\phi - V_\Gamma^\theta\right)} \right\rangle,
    \end{align*}
    which completes the proof of the first statement. The second result is then a simple consequence of \cref{prop:general_F'} and the fact that, since $(X_0)_{|\partial \Omega} = \partial_\phi + \omega \partial_\theta$, we have $\Pi^\phi(\theta) = \theta + \omega \phi$.
\end{proof}

\section{The axisymmetric case}\label{sec:axisymmetric}
In this section, we consider the embedding of the standard axisymmetric torus defined in Cartesian coordinates by
\begin{equation}\label{eq:axisymmetric_embedding}
    \mathcal{E}(\phi, \theta) = \left((R_T + r_P \cos(2\pi\theta))\cos(2\pi\phi), (R_T + r_P \cos(2\pi\theta))\sin(2\pi\phi), r_P\sin(2\pi\theta)\right),
\end{equation}
where $R_T$ and $r_P$ are the major and minor radius respectively with $r_P < R_T$. We also denote 
$$
R(\theta) = R_T + r_P\cos(2\pi\theta),
$$
which is the distance of the point $\mathcal{E}(\phi, \theta)$ to the $z$-axis. The aim of this section is to prove the following theorem.

\begin{theorem}\label{th:zero_derivative_axisymmetric}
    Let $\mathcal{E}$ be as described above. We have for all $V$ in $\mathrm{Vec}(\R^3)$ 
    $$
    \Pi'(\mathcal{E}; V) = 0.
    $$
\end{theorem}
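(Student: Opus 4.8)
The plan is to combine the explicit form of the harmonic field on an axisymmetric torus with the rotational symmetry of the domain. On the solid torus of revolution $\Omega$ bounded by $\mathcal{E}(\T^2)$, the unique (up to scale) curl-free, divergence-free field tangent to $\partial\Omega$ is a multiple of $R^{-1}\hat{e}_\phi$, where $\hat{e}_\phi$ is the azimuthal unit vector field and $R$ the distance to the $z$-axis (this is essentially $\nabla$ of the multivalued azimuthal angle); normalizing the circulation along $\mathcal{E}_*\gamma_\phi$ fixes the constant, so $B(\Omega) = \tfrac{1}{2\pi R}\hat{e}_\phi$. In the $(\phi,\theta)$ coordinates induced by \cref{eq:axisymmetric_embedding} this reads $B(\Omega)_{|\partial\Omega} = \chi\,\partial_\phi$ with $\chi = B(\Omega)^\phi > 0$ a function of $\theta$ alone; in particular $\mathcal{E}$ is admissible, the hypotheses of \cref{prop:derivatives_linearizable} hold with $\omega = 0$, and $X_0 = \partial_\phi$. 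Hence $\Pi(\mathcal{E}) = \mathrm{id}$ and $\Pi^\phi(\theta) = \theta$, so $X_0^\theta \equiv 0$ and \cref{prop:general_F'} collapses to $\Pi'(\mathcal{E};V)(\theta) = \int_0^1 (X'_V)^\theta(\phi,\theta)\,d\phi$. It therefore suffices to show this integral vanishes for every $\theta$ and every $V$.

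I would then invoke the formula \cref{eq:X'_linearized} of \cref{prop:derivatives_linearizable} with $\omega = 0$, which writes $(X'_V)^\theta$ as a sum of three terms. The middle term is $\langle(1,0), \nabla_{\T^2}(-V_\Gamma^\theta)\rangle = -\partial_\phi V_\Gamma^\theta$, whose integral over $\phi \in [0,1]$ vanishes by periodicity. For the first term, $f\,\tfrac{2\mathbb{II}(B_0,B_0^\perp)}{\sqrt{g}\,\chi^2}$, I would use that a torus of revolution is a surface whose meridians and parallels are lines of curvature: $B_0$ is tangent to the parallels and $B_0^\perp$ to the meridians, so these span the two principal directions at each point, whence $\mathbb{II}(B_0,B_0^\perp) \equiv 0$ and the first term vanishes pointwise. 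The only contribution left is the third term $\tfrac{1}{\sqrt{g}\chi^2}\,B_0^\perp\cdot\nabla_\Gamma u_V$.

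For this term I would first note that, since $\hat{e}_\phi$ is orthogonal to $\partial_\theta\mathcal{E}$, the metric induced on $\partial\Omega$ is diagonal in $(\phi,\theta)$, so $B_0^\perp = \sqrt{g}\,\chi\,\nabla_\Gamma\theta = \sqrt{g}\,\chi\,g^{\theta\theta}\partial_\theta$ and hence $\tfrac{1}{\sqrt{g}\chi^2}B_0^\perp\cdot\nabla_\Gamma u_V = \tfrac{g^{\theta\theta}}{\chi}\,\partial_\theta\!\left(u_V|_{\partial\Omega}\right)$, with coefficient $g^{\theta\theta}/\chi$ depending only on $\theta$. Integrating in $\phi$, the claim reduces to showing that $\theta\mapsto \int_0^1 u_V|_{\partial\Omega}(\phi,\theta)\,d\phi$ is constant. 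Here the symmetry of the PDE \cref{eq:u} enters: since $\Omega$ is invariant under rotations about the $z$-axis and the outward normal is equivariant under this isometric $SO(2)$-action, the average $\bar u_V$ of $u_V$ over the rotation group is harmonic in $\Omega$ with Neumann data equal to the rotational average of $\div_\Gamma(B_0(V\cdot n))$; since $B_0 = \chi\,\partial_\phi$ on $\partial\Omega$ with $\sqrt{g}\,\chi$ a function of $\theta$ alone, a short computation gives $\div_\Gamma(B_0(V\cdot n)) = \chi\,\partial_\phi(V\cdot n)$, whose $\phi$-average vanishes by periodicity. By uniqueness up to an additive constant of the Neumann Laplace problem on the connected domain $\Omega$, $\bar u_V$ is constant; restricting to $\partial\Omega$ shows the $\phi$-average of $u_V|_{\partial\Omega}$ is independent of $\theta$. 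Adding up the three contributions yields $\int_0^1 (X'_V)^\theta(\phi,\theta)\,d\phi = 0$, i.e. $\Pi'(\mathcal{E};V) = 0$.

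The routine parts — computing $B(\Omega)$, checking admissibility and the orientation of $(\partial_\phi,\partial_\theta)$, and evaluating the metric coefficients of \cref{eq:axisymmetric_embedding} — require only care with the $2\pi$ normalizations. The crux is the treatment of the third term: one must recognize that $B_0^\perp\cdot\nabla_\Gamma u_V$ is (a function of $\theta$ times) the single partial derivative $\partial_\theta(u_V|_{\partial\Omega})$, and then justify rigorously that averaging the harmonic function $u_V$ over the rotation group yields a harmonic function whose normal trace is the average of the original normal trace, so that Neumann uniqueness forces it to be constant. The identity $\mathbb{II}(B_0,B_0^\perp)=0$, encoding the relation between the harmonic field and the curvature of the boundary, is the second place where the special geometry of the axisymmetric torus is used.
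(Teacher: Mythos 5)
Your proposal is correct, and up to the reduction it follows the paper exactly: the same explicit form of $B(\Omega)$, the same application of \cref{prop:derivatives_linearizable} with $\omega=0$, the vanishing of $\mathbb{II}(B_0,B_0^\perp)$ because meridians and parallels are lines of curvature, the disappearance of the tangential term by $\phi$-periodicity, and the identification of the surviving term as $(g^{\theta\theta}/\chi)\,\partial_\theta u_V = (R^2/r_P^2)\,\partial_\theta u_V$, which is precisely \cref{cor:expression_derivative_axisymmetric}. Where you diverge is the final step. The paper first proves that the Neumann data $\nabla u_V\cdot n = \chi\,\partial_\phi(V\cdot n)$ has zero $\phi$-average (\cref{lemma:zero_average_condition}), then runs a duality argument: for any axisymmetric test density $f$ it constructs the auxiliary Neumann solution $v$ with data $f-\fint f$, shows $v$ is axisymmetric by the rotational symmetry, and integrates by parts twice to get $\int_{\partial\Omega} f u_V = \fint_{\partial\Omega} f\int_{\partial\Omega}u_V$ (\cref{lemma:toroidal_averaging}); it then passes to Dirac approximations in $\theta$ to conclude that $\int_0^1 u_V(\phi,\theta)\,d\phi$ is constant. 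You instead apply the $SO(2)$-averaging directly to $u_V$: the group average $\bar u_V$ is harmonic, its Neumann trace is the $\phi$-average of the original Neumann data (which vanishes by the same computation as \cref{lemma:zero_average_condition}), so Neumann uniqueness forces $\bar u_V$ to be constant, and its boundary trace is exactly the $\phi$-average of $u_V|_{\partial\Omega}$. Both arguments hinge on the same two facts --- the rotational symmetry of $\Omega$ and the zero $\phi$-average of the Neumann data --- but your version is more direct, avoiding the auxiliary problem and the mollifier limit; the paper's duality formulation has the mild advantage of isolating a reusable statement (\cref{lemma:toroidal_averaging}) about arbitrary axisymmetric weights. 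The only point to make fully explicit in your write-up is the differentiation under the integral sign showing that $\nabla\bar u_V\cdot n$ equals the average of $\nabla u_V\cdot n$, which is immediate since $u_V$ is smooth up to the boundary and the rotations act by isometries preserving $\Omega$ and hence commute with the outward normal.
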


{
\begin{remark}\label{rem:th_axisymmetric}
    Although we only work with the standard axisymmetric embedding given by \cref{eq:axisymmetric_embedding} for simplicity, one can easily adapt all the proofs so that \cref{th:zero_derivative_axisymmetric} is in fact valid for all axisymmetric embeddings.
\end{remark}
}

We begin by computing the relevant geometric objects associated with this embedding. The basis vectors of the coordinates $(\phi, \theta)$ are given by
\begin{align*}
    \partial_\phi &= -2\pi R(\theta)\sin(2\pi\phi)\partial_x + 2\pi R(\theta)\cos(2\pi\phi)\partial_y, 
    \\
    \partial_\theta &= -2\pi r_P\sin(2\pi\theta)\cos(2\pi\phi)\partial_x - 2\pi r_P\sin(2\pi\theta)\sin(2\pi\phi)\partial_y + 2\pi r_P\cos(2\pi\theta)\partial_z,
\end{align*}
so that 
$$
g = 4\pi^2 R(\theta)^2 d\phi^2 + 4\pi^2 r_P^2 d\theta^2.
$$
We also deduce $\sqrt{g} = 4\pi^2 r_P R(\theta)$. One also verifies that
$$
n = \cos(2\pi\theta)\cos(2\pi\phi)\partial_x + \cos(2\pi\theta)\sin(2\pi\phi)\partial_y + \sin(2\pi\theta)\partial_z.
$$
Computing the second-order derivatives of $\mathcal{E}$, we get 
\begin{align*}
    \partial_\phi^2 \mathcal{E} &= -4\pi^2 R(\theta) \cos(2\pi\phi)\partial_x -4\pi^2 R(\theta) \sin(2\pi\phi)\partial_y,
    \\
    \partial_\phi \partial_\theta \mathcal{E} &= 4\pi^2 r_P \sin(2\pi\theta)\sin(2\pi\phi)\partial_x - 4\pi^2 r_P \sin(2\pi\theta)\cos(2\pi\phi)\partial_y,
    \\
    \partial_\theta^2 \mathcal{E} &= -4\pi^2 r_P \cos(2\pi\theta)\cos(2\pi\phi)\partial_x - 4\pi^2r_P \cos(2\pi \theta)\sin(2\pi\phi)\partial_y - 4\pi^2r_P \sin(2\pi\theta)\partial_z,
\end{align*}
so that
\begin{align}
    \begin{split}\label{eq:II_axisymmetric}
        \mathbb{II} &= \left(\partial_\phi^2 \mathcal{E} \cdot n\right) d\phi^2 + 2\left(\partial_\phi \partial_\theta \mathcal{E} \cdot n\right) d\phi d\theta + \left(\partial_\theta^2 \mathcal{E} \cdot n\right) d\theta^2,
        \\
        &= -4\pi^2 R(\theta) \cos(2\pi\theta) d\phi^2 -4\pi^2 r_P d\theta^2.
    \end{split}
\end{align}

We now turn to the underlying domain $\Omega$, and the associated harmonic field. $\mathcal{E}(\T^2)$ bounds the domain 
$$
\Omega=\{ \left( (R_T+r_P x)\cos (2\pi\phi), (R_T+r_P x)\sin (2\pi\phi), r_P y \right) \in \R^3 \mid (\phi, x, y) \in S^1 \times D^2 \}.
$$
In this case, the harmonic field of $\Omega$ is explicitly known, and is given by the formula 
$$
B(\Omega) = \frac{1}{2\pi}\left(-\frac{y}{x^2+y^2}\partial_x + \frac{x}{x^2+y^2}\partial_y\right),
$$
where the $1/2\pi$ constant ensures that $B(\Omega)$ has unit circulation along positively oriented toroidal loops. Moreover, the restriction of $B(\Omega)$ to the boundary is given by 
\begin{equation}\label{eq:B_axisymmetric}
    B(\Omega)_{|\partial \Omega} = \frac{1}{4\pi^2 R(\theta)^2}\partial_\phi.
\end{equation}
It is then clear that $\mathcal{E}$ is indeed an admissible embedding. We are now able to prove the following {proposition}.
\begin{proposition}\label{prop:axisymmetric_objects}
    Let $\mathcal{E}$, $B(\Omega)$ and $(\phi, \theta)$ be as defined above. We then have 
    \begin{itemize}
        \item $X(\mathcal{E}) = \partial_\phi$,
        \item $\Pi(\mathcal{E}) = id$,
        \item $\mathbb{II}(B(\Omega), B(\Omega)^\perp) = 0$.
    \end{itemize}
\end{proposition}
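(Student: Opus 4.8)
The plan is to verify each of the three bullet points by direct computation, working from the explicit formula \eqref{eq:B_axisymmetric} for the harmonic field on the boundary. First, since $B(\Omega)_{|\partial\Omega} = \tfrac{1}{4\pi^2 R(\theta)^2}\partial_\phi$, the toroidal component is $B(\Omega)^\phi = \tfrac{1}{4\pi^2 R(\theta)^2}$, which is positive (confirming admissibility) and has no $\partial_\theta$ part. By the definition \eqref{eq:def_X}, $X(\mathcal{E}) = B(\Omega)/B(\Omega)^\phi = \partial_\phi$, giving the first item. For the second item, the vector field $X_\phi(\mathcal{E})$ on $S^1$ defined in \eqref{eq:def_X_phi} has $\theta$-component $X(\mathcal{E})^\theta = 0$, so the non-autonomous flow \eqref{eq:def_Pi} is the identity at all times; hence $\Pi(\mathcal{E}) = \mathrm{id}$ (and in fact $\Pi^\phi(\mathcal{E}) = \mathrm{id}$ for all $\phi$).

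For the third item I would compute $\mathbb{II}(B(\Omega), B(\Omega)^\perp)$ using the already-computed second fundamental form \eqref{eq:II_axisymmetric}. Since $B(\Omega)_{|\partial\Omega}$ is proportional to $\partial_\phi$, its orthogonal complement $B(\Omega)^\perp = n \times B(\Omega)$ is proportional to $\partial_\phi^\perp$; by the orthogonality relations \eqref{eq:orthogonality_relations}, $\partial_\phi^\perp = \sqrt{g}\,\nabla_\Gamma\theta$, and since the metric $g = 4\pi^2 R(\theta)^2 d\phi^2 + 4\pi^2 r_P^2 d\theta^2$ is diagonal, $\nabla_\Gamma\theta$ is proportional to $\partial_\theta$. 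Thus $B(\Omega)^\perp$ is proportional to $\partial_\theta$, and $\mathbb{II}(B(\Omega), B(\Omega)^\perp)$ is a nonzero multiple of $\mathbb{II}(\partial_\phi, \partial_\theta)$, which is the off-diagonal $d\phi\,d\theta$ coefficient of \eqref{eq:II_axisymmetric} — and that coefficient is zero. Hence $\mathbb{II}(B(\Omega), B(\Omega)^\perp) = 0$.

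None of these steps is a genuine obstacle; the proposition is essentially a bookkeeping exercise assembling \eqref{eq:B_axisymmetric}, \eqref{eq:II_axisymmetric}, and the orthogonality relations. The only point requiring a little care is the computation of $B(\Omega)^\perp$: one must check the sign/normalization conventions (orientation of $(\partial_\phi,\partial_\theta)$, the definition $u^\perp = n\times u$) consistently, but since the final answer for the third item is $0$, the precise constants are irrelevant and only the vanishing of the mixed term $\partial_\phi\partial_\theta\mathcal{E}\cdot n$ in the axisymmetric second fundamental form matters. This vanishing is exactly what makes the diagonal structure of $\mathbb{II}$ in \eqref{eq:II_axisymmetric} do the work.
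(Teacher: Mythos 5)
Your proposal is correct and follows essentially the same route as the paper: the first two items are read off directly from \cref{eq:B_axisymmetric}, and the third follows because the orthogonality of the $(\phi,\theta)$ coordinates makes $B(\Omega)^\perp$ collinear with $\partial_\theta$ while \cref{eq:II_axisymmetric} shows $\mathbb{II}$ is diagonal in these coordinates. No gaps; your extra remarks on sign conventions are sensible but, as you note, immaterial since the mixed coefficient vanishes.
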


\begin{proof}
    The first two statements are straightforward using \cref{eq:B_axisymmetric}. As for the third statement, using the fact that the coordinates $(\phi, \theta) $ are orthogonal, we know that $B(\Omega)^\perp$ is colinear to $\partial_\theta$. Furthermore, we know from \cref{eq:II_axisymmetric} that the second fundamental form is diagonalized in the coordinates $(\phi, \theta)$, which gives us the desired result.
\end{proof}

\begin{corollary}\label{cor:expression_derivative_axisymmetric}
    Let $\mathcal{E}$ and $\Omega$ be as defined above, and $u_V$ be the solution to \cref{eq:u}. Then 
    $$
    \Pi'(\mathcal{E}; V)(\theta) = \frac{R(\theta)^2}{r_P^2}\int_{0}^{1} \partial_\theta u_V(\phi, \theta) d\theta.
    $$
\end{corollary}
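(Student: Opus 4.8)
The plan is to specialize \cref{prop:derivatives_linearizable} to the standard axisymmetric embedding and then evaluate the resulting integral with the explicit geometric data already assembled in this section. By \cref{eq:B_axisymmetric}, the boundary harmonic field is $B(\Omega)_{|\partial\Omega} = \chi\,\partial_\phi$ with $\chi = \bigl(4\pi^2 R(\theta)^2\bigr)^{-1} > 0$ and rotational transform $\omega = 0$; moreover $(\partial_\phi,\partial_\theta)$ is positively oriented, which one checks by computing $\partial_\phi\times\partial_\theta$ from the basis vectors above and seeing it is a positive multiple of $n$. Hence the hypotheses of \cref{prop:derivatives_linearizable} hold with $\vec\omega = (1,0)^T$, and since $\Pi^\phi(\theta) = \theta$ (by \cref{prop:axisymmetric_objects}) the second formula of that proposition reads $\Pi'(\mathcal{E};V)(\theta) = \int_0^1 (X'_V)^\theta(\phi,\theta)\,d\phi$.

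Next I would simplify $(X'_V)^\theta$ using \cref{eq:X'_linearized}. The first term vanishes because $\mathbb{II}(B_0,B_0^\perp) = 0$ by \cref{prop:axisymmetric_objects}. With $\omega = 0$ the middle term is $\bigl\langle (1,0),\nabla_{\T^2}(-V_\Gamma^\theta)\bigr\rangle = -\partial_\phi V_\Gamma^\theta$, whose integral over $\phi\in S^1$ is zero by $1$-periodicity of $V_\Gamma^\theta\in\mathcal{C}^\infty(\T^2)$ in the $\phi$ variable. Only the $u_V$ term survives, so
$$
\Pi'(\mathcal{E};V)(\theta) = \int_0^1 \frac{1}{\sqrt{g}\,\chi^2}\bigl(B_0^\perp\cdot\nabla_\Gamma u_V\bigr)(\phi,\theta)\,d\phi.
$$

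It then remains to identify the scalar prefactor. Since the $(\phi,\theta)$ coordinates are orthogonal with $g_{\theta\theta} = 4\pi^2 r_P^2$ and $\sqrt{g} = 4\pi^2 r_P R(\theta)$, \cref{eq:orthogonality_relations} together with \cref{eq:gradient_coordinates} gives $\partial_\phi^\perp = \sqrt{g}\,\nabla_\Gamma\theta = (R(\theta)/r_P)\,\partial_\theta$, whence $B_0^\perp = \chi\,\partial_\phi^\perp = \bigl(4\pi^2 r_P R(\theta)\bigr)^{-1}\partial_\theta$ and therefore $B_0^\perp\cdot\nabla_\Gamma u_V = \bigl(4\pi^2 r_P R(\theta)\bigr)^{-1}\partial_\theta u_V$. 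Combining this with $\sqrt{g}\,\chi^2 = r_P\bigl(4\pi^2 R(\theta)^3\bigr)^{-1}$ produces the prefactor $R(\theta)^2/r_P^2$, which is independent of $\phi$; pulling it out of the integral yields
$$
\Pi'(\mathcal{E};V)(\theta) = \frac{R(\theta)^2}{r_P^2}\int_0^1 \partial_\theta u_V(\phi,\theta)\,d\phi,
$$
as claimed.

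There is no genuine analytic obstacle here: the corollary is essentially bookkeeping once \cref{th:differentiability_B}, \cref{prop:general_X'} and \cref{prop:derivatives_linearizable} are in hand. The only points that need care are (i) verifying the orientation hypothesis of \cref{prop:derivatives_linearizable} so that the signs in \cref{eq:X'_linearized} and \cref{eq:orthogonality_relations} are the ones stated, and (ii) observing that the $V_\Gamma$-dependent term integrates to zero, so that $\Pi'(\mathcal{E};V)$ depends on the deformation $V$ only through the harmonic potential $u_V$ solving \cref{eq:u} — the observation that drives the vanishing statement in \cref{th:zero_derivative_axisymmetric}.
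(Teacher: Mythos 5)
Your proof is correct and follows essentially the same route as the paper's: apply \cref{prop:derivatives_linearizable} with $\omega=0$, kill the curvature term via \cref{prop:axisymmetric_objects}, discard the tangential term $-\partial_\phi V_\Gamma^\theta$ by $\phi$-periodicity, and reduce $B_0^\perp\cdot\nabla_\Gamma u_V$ to $\bigl(4\pi^2 r_P R(\theta)\bigr)^{-1}\partial_\theta u_V$ (the paper does this by moving the perp onto $\nabla_\Gamma u_V$ via the isometry $v\mapsto v^\perp$, you by computing $B_0^\perp$ directly, which is an equivalent bookkeeping choice). Note only that the displayed statement of the corollary carries a typo --- the integration variable should be $d\phi$, not $d\theta$ --- and your derivation, like the paper's own, correctly yields the $d\phi$ version.
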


\begin{proof}
    Using \cref{prop:axisymmetric_objects}, we know that we can apply \cref{prop:derivatives_linearizable} with $\omega = 0$. We decompose $V \in \mathrm{Vec}\left(\R^3\right)$ as $V = f \tilde{n} + V_\Gamma$, where $\tilde{n}$ is a smooth extension of $n$ to $\R^3$, and $V_\Gamma$ is tangent to $\partial \Omega$. Since $(\partial_\phi, \partial_\theta)$ is positively oriented, we know from \cref{prop:derivatives_linearizable} and \cref{prop:axisymmetric_objects} that
    $$
    (X'_V)^\theta = -\partial_\phi V_\Gamma^\theta + \frac{1}{\sqrt{g}(B(\Omega)^\phi)^2} B(\Omega)^\perp \cdot \nabla_\Gamma u_V.
    $$
    Using the fact that $v \mapsto v^\perp$ is an isometry on each tangent plane of $\partial \Omega$ {and that $\left(u^\perp\right)^\perp = -u$}, we find  
    \begin{align*}
        B(\Omega)^\perp \cdot \nabla_\Gamma u_V &= {-}B(\Omega) \cdot \left(\nabla_\Gamma u_V\right)^\perp 
        \\
        &= {-}\left(B(\Omega)^\phi \partial_\phi\right) \cdot \left(\frac{1}{\sqrt{g}}\partial_\phi u_V \partial_\theta - \frac{1}{\sqrt{g}}\partial_\theta u_V \partial_\phi\right)
        \\
        &= B(\Omega)^\phi \partial_\theta u_V \frac{g_{\phi \phi}}{\sqrt{g}},
    \end{align*}
    where we used $\nabla_\Gamma u_V = \partial_\phi u_V \nabla_\Gamma \phi + \partial_\theta u_V \nabla_\Gamma \theta$, as well as $\nabla_\Gamma \phi^\perp = 1/\sqrt{g}\partial_\theta$ and $\nabla_\Gamma \theta^\perp = -1/\sqrt{g}\partial_\phi$, which are simple consequences of \cref{eq:gradient_coordinates,eq:orthogonality_relations}. Therefore, we obtain 
    \begin{align*}
        \left(X'_V\right)^\theta &= -\partial_\phi V_\Gamma^\theta {+} \frac{g_{\phi \phi}}{(\det g) B(\Omega)^\phi} \partial_\theta u_V
        \\
        &= -\partial_\phi V_\Gamma^\theta {+} \frac{R(\theta)^2}{r_P^2}\partial_\theta u_V.
    \end{align*}
    Using once again \cref{prop:derivatives_linearizable}, we obtain 
    \begin{align*}
        \Pi'(\mathcal{E}; V) &= \int_0^1 \left(X'_V\right)^\theta(\phi, \theta) d\phi
        \\
        &= \frac{R(\theta)^2}{r_P^2}\int_{0}^{1} \partial_\theta u_V(\phi, \theta) d\phi.
    \end{align*}
\end{proof}

The proof of \cref{th:zero_derivative_axisymmetric} follows from \cref{cor:expression_derivative_axisymmetric} taking into account suitable symmetry properties described in the following lemmas.

\begin{lemma}\label{lemma:zero_average_condition}
Let $\Omega$ be as defined above and $u_V$ be as in \cref{th:differentiability_B}. Then, for all $\theta$ in $S^1$, we have
\begin{equation}\label{eq:zero average condition}
\int_{0}^{1} \nabla u_V(\phi, \theta) \cdot n d\phi = 0.
\end{equation}
\end{lemma}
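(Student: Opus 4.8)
The plan is to reduce the statement to the Neumann datum satisfied by $u_V$ and then exploit the explicit axisymmetric data collected above. On $\partial\Omega$ one has $\nabla u_V\cdot n=\partial_n u_V$, which by the boundary condition in \cref{eq:u} equals $\div_\Gamma\big(B(\Omega)(V\cdot n)\big)$. So the quantity to compute is
$$
\int_0^1 \div_\Gamma\big(B(\Omega)(V\cdot n)\big)(\phi,\theta)\,d\phi .
$$
I would write $f=(V\cdot n)_{|\partial\Omega}$, a smooth function on $\partial\Omega\cong\T^2$, hence $1$-periodic in $\phi$.

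Next I would expand the surface divergence in the $(\phi,\theta)$ coordinates via $\div_\Gamma W=\frac{1}{\sqrt g}\big(\partial_\phi(\sqrt g\,W^\phi)+\partial_\theta(\sqrt g\,W^\theta)\big)$ and insert the two features that make the axisymmetric case special: by \cref{eq:B_axisymmetric}, $B(\Omega)_{|\partial\Omega}=\frac{1}{4\pi^2R(\theta)^2}\partial_\phi$ has vanishing $\partial_\theta$-component, and $\sqrt g=4\pi^2 r_P R(\theta)$ depends on $\theta$ only. The geometric factors then combine and one is left with
$$
\div_\Gamma\big(B(\Omega)f\big)=\frac{1}{\sqrt g}\,\partial_\phi\Big(\sqrt g\,\tfrac{f}{4\pi^2R(\theta)^2}\Big)=\frac{1}{4\pi^2R(\theta)^2}\,\partial_\phi f ,
$$
i.e. a total $\phi$-derivative times a function of $\theta$ alone.

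The last step is to integrate over one period in $\phi$: the prefactor $\tfrac{1}{4\pi^2R(\theta)^2}$ comes out, and $\int_0^1\partial_\phi f(\phi,\theta)\,d\phi=f(1,\theta)-f(0,\theta)=0$ by $1$-periodicity of $f$, which gives the claim for every $\theta\in S^1$.

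I do not anticipate a genuine obstacle: the whole content is the algebraic collapse of $\div_\Gamma\big(B(\Omega)f\big)$ made possible by axisymmetry — no $\partial_\theta$-component of the harmonic field and $\theta$-only dependence of $\sqrt g$. The one point deserving a line of care is that $V\cdot n$ really is a function on the torus $\partial\Omega$, so it is periodic in $\phi$; this is precisely what makes the boundary term disappear. For a general axisymmetric embedding, as in \cref{rem:th_axisymmetric}, the same argument works: $\sqrt g\,\div_\Gamma(B_0 f)=\partial_\phi(\sqrt g\,B_0^\phi f)+\partial_\theta(\sqrt g\,B_0^\theta f)$, the first term integrates to zero in $\phi$ and the second vanishes because $B_0^\theta=0$, while $\sqrt g$ still depends on $\theta$ only so it factors out of the integral.
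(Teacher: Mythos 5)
Your proof is correct and follows essentially the same route as the paper's: reduce to the Neumann datum, observe that in the axisymmetric coordinates $\div_\Gamma\big(B(\Omega)(V\cdot n)\big)$ collapses to $\tfrac{1}{4\pi^2 R(\theta)^2}\partial_\phi(V\cdot n)$, and integrate over a period in $\phi$. The only cosmetic difference is that the paper first notes $\div_\Gamma B(\Omega)=0$ and applies the product rule, whereas you expand the surface divergence directly in coordinates; the content is identical.
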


\begin{proof}
    We recall that $u_V$ is a harmonic function of $\Omega$ satisfying the boundary condition
    $$
    \nabla u_V \cdot n = \div_{\Gamma}(B(\Omega) (V \cdot n)).
    $$
    A quick computation in coordinates shows that $\div_{\Gamma}(B(\Omega))=0$, so that 
    $$
    \div_{\Gamma}(B(\Omega) (V \cdot n)) = B(\Omega) \cdot \nabla_{\Gamma}(V \cdot n).
    $$
    {Furthermore, since $B(\Omega) = 1/(4\pi^2 R(\theta)^2)\partial_\phi$, we have $B(\Omega) \cdot \nabla_{\Gamma}(V \cdot n) = 1/(4\pi^2 R(\theta)^2)\partial_\phi(V \cdot n)$. Finally, we deduce 
    \begin{align*}
        \int_0^1 \nabla u_V (\phi, \theta) \cdot n d\phi  &= \frac{1}{4\pi^2R(\theta)^2} \int_0^1 \partial_\phi (V \cdot n)(\phi, \theta) d\phi
        \\
        &= 0.
    \end{align*}}
\end{proof}

Since $u_V$ is harmonic in $\Omega$, we know that $\nabla u_V \cdot n$ must be of zero average on $\partial \Omega$. \cref{lemma:zero_average_condition} then tells us that $\nabla u_V \cdot n$ must moreover be of zero average along any toroidal loop. In particular, $u_V$ may not be any harmonic function of $\bar{\Omega}$. This fact is then used to prove the following lemma, for which we introduce the notation
$$
\fint_{\partial \Omega} f = \frac{\int_{\partial \Omega} f}{|\partial \Omega|} =  \frac{\int_{\partial \Omega} f }{\int_{\partial \Omega} 1},
$$
where $f$ is an integrable function on $\partial \Omega$.
\begin{lemma}\label{lemma:toroidal_averaging}
Let $\Omega$ be as defined above and $u_V$ be as in \cref{th:differentiability_B}. Let $f$ be a smooth function on $\partial \Omega$ such that $\partial_\phi f=0$. We have
\begin{equation}\label{eq:toroidal averaging}
\int_{\partial \Omega} f u_V = \fint_{\partial \Omega} f \int_{\partial \Omega} u_V.
\end{equation}
\end{lemma}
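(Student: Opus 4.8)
The plan is to exploit the fact, implicit in \cref{lemma:zero_average_condition}, that the Neumann data of $u_V$ has vanishing toroidal average. Concretely, let $f$ be a smooth function on $\partial\Omega$ with $\partial_\phi f = 0$, i.e. $f$ depends only on $\theta$. First I would introduce the function $w$ on $\partial\Omega$ defined by $w(\theta) = \int_0^\theta \left( f(s) - \fint_{\partial\Omega} f \right)\, \mathrm{d}s$ after checking this is well-defined on $S^1$; more robustly, since $f - \fint_{\partial\Omega}f$ integrates to zero against the surface measure (which in these coordinates is $\sqrt g\, \mathrm d\phi\, \mathrm d\theta = 4\pi^2 r_P R(\theta)\, \mathrm d\phi\,\mathrm d\theta$), one should instead solve an auxiliary problem. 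The cleanest route is to solve a Neumann problem in $\Omega$: find a harmonic function $\psi$ on $\Omega$ with $\nabla\psi\cdot n = f - \fint_{\partial\Omega}f$ on $\partial\Omega$ (this is solvable since the data has zero average over $\partial\Omega$, and by axisymmetry $\psi$ can be taken axisymmetric, hence $\partial_\phi$-independent on the boundary).

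The key step is then Green's identity. Since both $u_V$ and $\psi$ are harmonic in $\Omega$,
\begin{equation*}
\int_{\partial\Omega} \psi\, (\nabla u_V\cdot n) \;=\; \int_\Omega \nabla\psi\cdot\nabla u_V \;=\; \int_{\partial\Omega} u_V\, (\nabla\psi\cdot n) \;=\; \int_{\partial\Omega} u_V\left(f - \fint_{\partial\Omega} f\right).
\end{equation*}
So it remains to show the left-hand side vanishes. Here is where \cref{lemma:zero_average_condition} enters: the boundary trace of $\psi$ is $\partial_\phi$-independent (by the axisymmetric choice of $\psi$), and the surface form in the $(\phi,\theta)$ coordinates factorizes as $4\pi^2 r_P R(\theta)\,\mathrm d\phi\,\mathrm d\theta$, which is also $\partial_\phi$-independent. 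Therefore
\begin{equation*}
\int_{\partial\Omega} \psi\,(\nabla u_V\cdot n) \;=\; \int_{S^1} \psi(\theta)\, 4\pi^2 r_P R(\theta) \left( \int_{S^1} (\nabla u_V \cdot n)(\phi,\theta)\, \mathrm d\phi \right) \mathrm d\theta \;=\; 0,
\end{equation*}
since the inner integral vanishes for every $\theta$ by \cref{lemma:zero_average_condition}. Combining the two displays gives $\int_{\partial\Omega} u_V f = \fint_{\partial\Omega} f \int_{\partial\Omega} u_V$, which is the claim.

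The main obstacle I anticipate is purely a matter of bookkeeping rather than depth: one must make sure the auxiliary function $\psi$ can genuinely be chosen axisymmetric (so that its boundary trace satisfies $\partial_\phi\psi = 0$), which follows because the Neumann data $f - \fint_{\partial\Omega}f$ is axisymmetric and the Neumann problem in the axisymmetric domain $\Omega$ preserves axisymmetry by uniqueness; and one should double-check the solvability condition $\int_{\partial\Omega}(f - \fint_{\partial\Omega}f) = 0$, which holds by the very definition of $\fint_{\partial\Omega}$. An alternative that sidesteps introducing $\psi$ altogether would be to expand $u_V$ in a Fourier series in $\phi$ on $\partial\Omega$ and observe that \cref{lemma:zero_average_condition} constrains the zeroth mode's normal derivative; pairing against $f$, which only sees the zeroth $\phi$-mode of $u_V$, reduces the identity to a one-dimensional statement in $\theta$. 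Either way the crucial input is \cref{lemma:zero_average_condition} together with the $\phi$-independence of both $f$ and the surface measure.
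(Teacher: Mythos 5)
Your proposal is correct and follows essentially the same route as the paper's proof: the paper introduces the same auxiliary harmonic function (there called $v$, with Neumann data $f-\fint_{\partial\Omega}f$ handled by first reducing to zero-average $f$), shows it is axisymmetric via the rotation-invariance/uniqueness argument, and concludes by Green's identity combined with \cref{lemma:zero_average_condition} exactly as you do.
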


\begin{proof}
First, suppose that $f$ has zero average on $\partial \Omega$. We then define $v$ as the zero average solution to
$$
\begin{cases}
\Delta v = 0 \hspace{0.25cm} \text{in } \Omega,
\\
\nabla v \cdot n = f \hspace{0.25cm} \text{on } \partial \Omega.
\end{cases}
$$
We now show that $\partial_\phi v=0$. Indeed, call $R_\Phi$ the rotation of angle $\Phi$ around the $z$-axis. We then get that $v \circ R_\Phi$ satisfies
$$
\Delta(v \circ R_\Phi) = (\Delta v) \circ R_\Phi = 0,
$$
because $R_\Phi$ is an isometry, and
$$
\nabla (v \circ R_\Phi) \cdot n = (\nabla v \cdot n) \circ R_\Phi = f,
$$
because $R_\Phi$ is an isometry which leaves $\Omega$ unchanged and $\partial_\phi f = 0$. Therefore, $v$ and $v \circ R_\Phi$ satisfy the same PDE, and have the same average. As a consequence, $v \circ R_\Phi = v$ for all $\Phi$, meaning that $\partial_\phi v =0$.

Now, using the equations satisfied by $u_V$ and $v$, and \cref{lemma:zero_average_condition}, we get
\begin{align*}
\int_{\partial \Omega} f u_V &= \int_{\partial \Omega} \left(\nabla v \cdot n\right) u_V
\\
&= \int_{\Omega} \nabla v \cdot \nabla u_V
\\
&= \int_{\partial \Omega} v \left(\nabla u_V \cdot n\right)
\\
&= 4\pi^2 r_P \int_{0}^{1} R(\theta) v(\theta) \int_{0}^{1} \nabla u_V \cdot n (\phi, \theta) d\phi d\theta
\\
&= 0.
\end{align*}
Finally, if we now take any $f$ in $\mathcal{C}^\infty(\partial \Omega)$, we can repeat the procedure with $f - \fint_{\partial \Omega} f$, and get
$$
\int_{\partial \Omega} \left[\left(f - \fint_{\partial \Omega} f\right) u_V\right] = 0,
$$
which gives us our desired result.
\end{proof}

We now have all the ingredients to prove \cref{th:zero_derivative_axisymmetric}.

\begin{proof}[Proof of \cref{th:zero_derivative_axisymmetric}]
To prove that $\Pi'(\mathcal{E};V)$ vanishes, we use \cref{lemma:toroidal_averaging} on approximations of $\delta_{\theta_0}$. We define $\tilde{f}_{\theta_0, \varepsilon} \in \mathcal{C}^\infty(S^1)$ so that $4\pi^2 r_P R \tilde{f}_{\theta_0, \varepsilon}$ is a family of smooth approximations of the Dirac at $\theta_0$. We may take for example
$$
R(\theta)\tilde{f}_{\theta_0, \varepsilon}(\theta) = C_\varepsilon \sum_{k \in \Z} \exp \left(-\frac{\varepsilon^2(\theta-\theta_0-k)^2}{2}\right),
$$
with $C_\varepsilon$ chosen so that
$$
\int_{0}^{1} 4\pi^2 r_P R(\theta) \tilde{f}_{\theta_0, \varepsilon}(\theta) d\theta = 1.
$$
Then, defining $f_{\theta_0, \varepsilon}(\phi, \theta) = \tilde{f}_{\theta_0, \varepsilon}(\theta)$ and using $\sqrt{g} = 4\pi^2r_P R(\theta)$, we get
\begin{align*}
\int_{\partial \Omega} f_{\theta_0, \varepsilon} u_V &= \int_{0}^{1} \int_{0}^{1} 4\pi^2 r_P R(\theta) \tilde{f}_{\theta_0, \varepsilon}(\theta) u_V(\phi, \theta) d\theta d\phi
\\
&\xrightarrow{\varepsilon \rightarrow 0} \int_{0}^{1} u_V(\phi, \theta_0) d\phi.
\end{align*}
On the other hand, \cref{lemma:toroidal_averaging} gives us
\begin{align*}
\int_{\partial \Omega} f_{\theta_0, \varepsilon} u_V &= \fint_{\partial \Omega} f_{\theta_0, \varepsilon} \int_{\partial \Omega} u_V
\\
&= \fint_{\partial \Omega} u_V.
\end{align*}
As a consequence, we have for all $\theta$ in $S^1$
$$
\int_{0}^{1} u_V(\phi, \theta) d\phi = \fint_{\partial \Omega} u_V,
$$
and thus
$$
\int_{0}^{1}\partial_\theta u_V(\phi, \theta)d\phi = 0.
$$
We then conclude using the formula of $\Pi'(\mathcal{E}; V)$ given in \cref{cor:expression_derivative_axisymmetric}.
\end{proof}

\section{The diophantine case}\label{sec:diophantine}

In this section, we suppose that $\mathcal{E}$ is an admissible embedding such that, in the corresponding coordinates $(\phi, \theta)$, we have $B_{|\partial \Omega} = \chi\left(\partial_\phi + \omega \partial_\theta\right)$, where $\chi$ is a smooth function on the boundary and $\omega$ is a diophantine number, that is, there exist $C, \tau$ positive constants such that, for all $p/q \in \Q$
\begin{equation}\label{eq:diophantine}
    \left| \omega - p/q \right| \geq C |q|^{-(\tau + 1)}.
\end{equation}
We note that this definition of diophantine numbers implies two other inequalities which will be used in this section. The first one, which is generally used for cohomological equations in the continuous context, is the following. For all $n \neq 0$ in $\Z^2$, we have 
\begin{equation}\label{eq:diophantine_continuous}
    \left|\vec{\omega} \cdot n\right| \geq C |n|^{-\tau},
\end{equation}
with $\vec{\omega} = (1, \omega)^T$. The second one, which is generally more common in discrete contexts, is the following. For all $q \neq 0$ in $\Z$, we have 
\begin{equation}\label{eq:diophantine_discrete}
    \left|e^{2\pi i \omega q} - 1\right| \geq C|q|^{-\tau},
\end{equation}
where $C$ is not necessarily the same constant as before. To obtain this inequality, we write using \cref{eq:diophantine}
$$
\inf_{p \in \Z}\left|\omega q - p\right| \geq C |q|^{-\tau}.
$$
The quantity on the right-hand side of this inequality is the distance between $\omega q$ and $0$ in $S^1 = \R/\Z$ using the quotient metric induced from the usual metric on $\R$. This metric is then equivalent to the metric on $S^1$ when seen as the unit circle in $\C$, which gives us \cref{eq:diophantine_discrete}.

In this section, we prove the following theorem
\begin{theorem}\label{th:surjectivity_diophantine}
    Suppose $\mathcal{E}$ is an admissible embedding with associated domain $\Omega$ and coordinates $(\phi, \theta)$ verifying the following hypotheses.
    \begin{enumerate}
        \item There exists $\chi$ in $\mathcal{C}^\infty(\partial \Omega)$ and a diophantine number $\omega$ such that $B(\Omega)_{|\partial \Omega} = \chi(\partial_\phi + \omega \partial_\theta)$. \label{hyp:diophantine}
        \item $\mathbb{II}\left(B(\Omega), B(\Omega)^\perp\right)$ vanishes nowhere on $\partial \Omega$. \label{hyp:second_fundamental_form}
    \end{enumerate}
    Then, the mapping $V \mapsto \Pi'(\mathcal{E};V)$ is surjective from $\mathrm{Vec}(\R^3)$ to $\mathcal{C}^\infty(S^1)$.
\end{theorem}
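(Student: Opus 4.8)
The plan is to read everything off from \cref{prop:derivatives_linearizable}. Writing $V = f\tilde{n} + V_\Gamma$ as there, the middle term $\langle\vec{\omega},\nabla_{\T^2}(\omega V_\Gamma^\phi - V_\Gamma^\theta)\rangle$ of \cref{eq:X'_linearized} is the derivative of $h := \omega V_\Gamma^\phi - V_\Gamma^\theta$ along $\partial_\phi + \omega\partial_\theta$, so integrating it along the characteristic $\phi\mapsto(\phi,\theta+\omega\phi)$ and using that $h$ is $1$-periodic in $\phi$ produces the coboundary $h(0,\theta+\omega) - h(0,\theta)$. The other two terms of \cref{eq:X'_linearized} depend on $V$ only through $f = V\cdot n$, since $u_V$ does by \cref{eq:u}. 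My strategy is therefore: (i) using purely tangential deformations, show the image of $V\mapsto\Pi'(\mathcal{E};V)$ contains every zero-average function of $\mathcal{C}^\infty(S^1)$, by solving a cohomological equation with the help of the diophantine condition; (ii) exhibit one normal deformation whose image has nonzero average; (iii) conclude, since the image is a linear subspace and $\mathcal{C}^\infty(S^1)$ is the sum of its zero-average subspace and the constants.

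For step (i), fix $\psi\in\mathcal{C}^\infty(S^1)$ with $\int_{S^1}\psi = 0$. Expanding in Fourier series and using the small-divisor bound $|e^{2\pi i k\omega}-1|\geq C|k|^{-\tau}$ from \cref{eq:diophantine_discrete}, the cohomological equation $g(\cdot+\omega) - g = \psi$ has a smooth solution $g$: the coefficients $\hat{g}_k = \hat{\psi}_k/(e^{2\pi i k\omega}-1)$ lose only polynomially in $k$, hence remain rapidly decreasing. Now take $V$ to be any smooth extension to $\R^3$ of the tangent field $-g(\theta)\partial_\theta$ on $\partial\Omega$; then $f = 0$, so the Neumann data in \cref{eq:u} vanishes and $u_V$ is constant, the $\mathbb{II}$-term of \cref{eq:X'_linearized} is killed, $h = g(\theta)$, and the computation above gives $\Pi'(\mathcal{E};V) = g(\cdot+\omega)-g = \psi$. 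Hence every zero-average smooth function lies in the image; here only hypothesis \ref{hyp:diophantine} is used.

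For step (ii), a change of variable in $\theta$ gives $\int_{S^1}\Pi'(\mathcal{E};V)\,d\theta = \int_{\T^2}(X'_V)^\theta\,d\phi\,d\theta$, which vanishes for tangential $V$, so I look for a normal deformation making this average nonzero, and the point is to pick $f$ so that the corrector $u_V$ drops out. Since $B_0 f = \chi f(\partial_\phi+\omega\partial_\theta)$, one has $\div_\Gamma(B_0 f) = \frac{1}{\sqrt{g}}\langle\vec{\omega},\nabla_{\T^2}(\sqrt{g}\,\chi f)\rangle$, which vanishes when $\sqrt{g}\,\chi f$ is constant. So choose $f = 1/(\sqrt{g}\,\chi)$ — positive since $\chi>0$ by admissibility — and $V = f\tilde{n}$: the Neumann data in \cref{eq:u} vanishes, so $u_V$ is constant, $\nabla_\Gamma u_V = 0$, and \cref{eq:X'_linearized} collapses to $(X'_V)^\theta = 2\,\mathbb{II}(B_0,B_0^\perp)/(g\chi^3)$. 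By hypothesis \ref{hyp:second_fundamental_form}, $\mathbb{II}(B_0,B_0^\perp)$ never vanishes on the connected torus $\partial\Omega$, hence has constant sign, so $\int_{\T^2}(X'_V)^\theta\,d\phi\,d\theta\neq 0$ and $\Pi'(\mathcal{E};V)$ has nonzero average. Combining (i) and (ii) yields surjectivity.

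The hard part will be step (ii): for a generic normal deformation the contribution of the harmonic corrector $u_V$ to the average could in principle cancel the second-fundamental-form term, so the key realization is that imposing $\div_\Gamma(B_0 f) = 0$ — which essentially forces $f\propto 1/(\sqrt{g}\,\chi)$ — removes $u_V$ entirely, after which hypothesis \ref{hyp:second_fundamental_form} supplies exactly the sign-definiteness needed. Step (i) is then the classical small-divisor solution of a cohomological equation; the geometric hypothesis \ref{hyp:second_fundamental_form} enters only through step (ii), consistent with the fact that it fails for the axisymmetric embedding of \cref{sec:axisymmetric}, where $\Pi'$ is identically zero.
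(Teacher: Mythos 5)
Your proposal is correct and follows essentially the same route as the paper: tangential deformations plus a cohomological equation with the diophantine small-divisor bound to hit every zero-average function, then the normal deformation $V=\tilde{n}/(\sqrt{g}\chi)$, which kills the Neumann data so that $u_V$ is constant and hypothesis \ref{hyp:second_fundamental_form} gives a sign-definite, hence nonzero-average, contribution. The only (cosmetic) difference is that you solve the discrete cohomological equation $g(\cdot+\omega)-g=\psi$ directly and let the integral along characteristics telescope, whereas the paper builds the same tangential field by first modifying the Fourier coefficients and solving the continuous equation $\langle\vec{\omega},\nabla_{\T^2}\varphi\rangle=\Phi$.
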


\begin{remark}\label{rem:assumption}
    For a point $x$ of $\partial \Omega$, the second fundamental form $\mathbb{II}_x$ at $x$ is related to the shape operator $S_x$ by $\mathbb{II}_x(u, v) = S_x(u) \cdot v$. Since $\mathbb{II}_x$ is a symmetric bilinear form, $S_x$ is self adjoint and there exist two orthonormal eigenvectors $E_1$ and $E_2$ of $T_x \partial \Omega$ with associated eigenvalues $\kappa_1$ and $\kappa_2$. $E_i$ are the principal directions at $x$, and $\kappa_i$ the related principal curvatures. We can also assume that $(E_1, E_2)$ is positively oriented on $T_x \partial \Omega$. If we decompose $B(\Omega)$ at $x$ as 
    $$
    B(\Omega) = \alpha_1 E_1 + \alpha_2 E_2,
    $$
    then 
    $$
    \mathbb{II}_x(B(\Omega), B(\Omega)^\perp) = \alpha_1 \alpha_2 (\kappa_2 - \kappa_1).
    $$
    Therefore, we find that $\mathbb{II}(B(\Omega), B(\Omega)^\perp)$ does not vanish at $x$ if and only if two conditions are met:
    \begin{itemize}
        \item $x$ is not an umbilical point of $\partial \Omega$, that is $\kappa_1 \neq \kappa_2$.
        \item $B(\Omega)$ is not in a principal direction at $x$.
    \end{itemize}
\end{remark}

The proof of \cref{th:surjectivity_diophantine} comes in two steps. First, we prove that by choosing $V$ tangent to the boundary, we can generate any $\Pi'(\mathcal{E};V)$ which is of zero average on $S^1$. This result is not surprising as having $V$ tangent to the boundary amounts to changing the coordinates on $\partial \Omega$, which in turn change the Poincaré map by a conjugation by a diffeomorhism on $S^1$. As such, tangent deformations do not generate a change in the rotation number, and the only possible changes in $\Pi_t$ have zero average at first-order in $t$. Then, the image of $V \mapsto \Pi(\mathcal{E};V)$ has co-dimension at most one in $\mathcal{C}^\infty(S^1)$, and one only needs to find a normal deformation which generates $\Pi'(\mathcal{E};V)$ with nonzero average. This is achieved using the assumption on $\mathbb{II}\left(B(\Omega), B(\Omega)^\perp\right)$.

\begin{proposition}\label{prop:surjectivity_zero_average}
    For all $\mu$ in $\mathcal{C}^\infty(S^1)$ such that 
    $$
    \int_{S^1}\mu = 0,
    $$
    there exists $V_\Gamma$ in $\mathrm{Vec}(\R^3)$ such that $(V_\Gamma)_{|\partial \Omega}$ is tangent to $\partial \Omega$ and $\Pi'(\mathcal{E};V_\Gamma) = \mu$.
\end{proposition}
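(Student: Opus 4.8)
The plan is to use that, for $V$ tangent to $\partial\Omega$, the shape derivative $\Pi'(\mathcal{E};V)$ takes a very explicit form, and then to reduce the statement to solving a cohomological equation for the rotation by $\omega$ on $S^1$.

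\textbf{Reduction to the tangent case.} First I would note that if $(V_\Gamma)_{|\partial\Omega}$ is tangent to $\partial\Omega$, then in the decomposition $V=f\tilde n+V_\Gamma$ of \cref{prop:derivatives_linearizable} we have $f=0$, and the auxiliary function $u_{V_\Gamma}$ of \cref{eq:u} vanishes identically (its Neumann data is zero since $V_\Gamma\cdot n=0$ on $\partial\Omega$, and the only zero-average harmonic function with vanishing Neumann data is $0$). Hence the formula of \cref{prop:derivatives_linearizable} collapses to
$$
(X'_{V_\Gamma})^\theta = \bigl\langle \vec\omega, \nabla_{\T^2} h\bigr\rangle = \partial_\phi h + \omega\,\partial_\theta h, \qquad h := \omega V_\Gamma^\phi - V_\Gamma^\theta .
$$
Since $V_\Gamma^\phi$ and $V_\Gamma^\theta$ can be any smooth functions on $\partial\Omega\cong\T^2$ (take $V_\Gamma^\phi=0$, $V_\Gamma^\theta=-h$), the function $h$ is an arbitrary element of $\mathcal{C}^\infty(\T^2)$; and any smooth tangent field on $\partial\Omega$ extends to a field in $\mathrm{Vec}(\R^3)$ by a tubular-neighbourhood cutoff, so it suffices to prescribe $(V_\Gamma)_{|\partial\Omega}$.

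\textbf{Telescoping and the cohomological equation.} Next I would insert this into $\Pi'(\mathcal{E};V_\Gamma)(\theta)=\int_0^1 (X'_{V_\Gamma})^\theta(\phi,\theta+\omega\phi)\,d\phi$ from \cref{prop:derivatives_linearizable}. The integrand is exactly $\tfrac{d}{d\phi}\bigl[h(\phi,\theta+\omega\phi)\bigr]$, so the integral telescopes to $h(1,\theta+\omega)-h(0,\theta)$, which by $\Z^2$-periodicity of $h$ equals $h(0,\theta+\omega)-h(0,\theta)$. Specialising to $h$ independent of $\phi$, i.e. $h(\phi,\theta)=g(\theta)$ with $g\in\mathcal{C}^\infty(S^1)$, gives
$$
\Pi'(\mathcal{E};V_\Gamma)(\theta) = g(\theta+\omega) - g(\theta) .
$$
So the proposition reduces to: for every mean-zero $\mu\in\mathcal{C}^\infty(S^1)$ there exists $g\in\mathcal{C}^\infty(S^1)$ with $g(\cdot+\omega)-g(\cdot)=\mu$.

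\textbf{Solving it.} This last equation I would solve by Fourier series: writing $\mu=\sum_{k\neq0}\hat\mu_k e^{2\pi ik\theta}$ (the hypothesis $\int_{S^1}\mu=0$ is exactly what kills the $k=0$ mode), set $g=\sum_{k\neq0}\frac{\hat\mu_k}{e^{2\pi ik\omega}-1}\,e^{2\pi ik\theta}$. By the diophantine bound \cref{eq:diophantine_discrete}, $|e^{2\pi ik\omega}-1|\geq C|k|^{-\tau}$, so $|\hat g_k|\leq C^{-1}|k|^{\tau}|\hat\mu_k|$ still decays faster than any power of $|k|$; hence $g\in\mathcal{C}^\infty(S^1)$ and solves the equation, and taking $V_\Gamma$ to be any smooth extension to $\R^3$ of $-g(\theta)\,\partial_\theta$ on $\partial\Omega$ gives $\Pi'(\mathcal{E};V_\Gamma)=\mu$. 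Everything here is routine except this last step: the one genuinely substantive ingredient is the solvability of the cohomological equation in the smooth category, which is precisely where the diophantine hypothesis on $\omega$ is needed to control the small denominators $e^{2\pi ik\omega}-1$; the only other points requiring a word are the vanishing of $u_{V_\Gamma}$ for tangent deformations and the extension of a boundary tangent field to $\R^3$.
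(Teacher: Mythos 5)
Your proof is correct and is essentially the paper's: both arguments choose the tangential deformation $V_\Gamma=-g\,\partial_\theta$ with $g$ determined by the Fourier coefficients $\hat g_k=\hat\mu_k/(e^{2\pi ik\omega}-1)$, with smoothness of $g$ resting on the diophantine bound \cref{eq:diophantine_discrete} and with $u_{V_\Gamma}$ constant because $V_\Gamma\cdot n=0$. Your telescoping observation merely replaces the paper's slight detour through the continuous cohomological equation $\langle\vec{\omega},\nabla_{\T^2}\varphi\rangle=\Phi$ (whose solution $\varphi$ coincides with your $g$) by the discrete one $g(\cdot+\omega)-g=\mu$; this is a somewhat more direct packaging of the same argument.
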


\begin{proof}
    Let $\mu$ in $\mathcal{C}^\infty(S^1)$ be given in Fourier basis by 
    $$
    \mu(\theta) = \sum_{n \in \Z} \hat{\mu}_n e^{2\pi i n\theta},
    $$
    with $\hat{\mu}_0 = 0$. Since $\mu$ is real-valued, we have $(\hat{\mu}_{-n})^* = \hat{\mu}_{n}$. We define $\hat{\Phi}_n = (2\pi i n \omega)/(e^{2\pi i n \omega}-1) \hat{\mu}_n$ for $n\neq 0$, and
    $$
    \Phi(\phi, \theta) = \sum_{n \in \Z\backslash\{0\}} \hat{\Phi}_{n} e^{2\pi i n \theta}.
    $$
    Using the discrete diophantine condition on $\omega$ given by \cref{eq:diophantine_discrete}, we get 
    \begin{align*}
        \left| \frac{2\pi i n \omega}{e^{2\pi i n \omega}-1}\hat{\mu}_n \right| &\leq C \left|e^{2\pi i \omega n}-1\right|^{-1} |n| \left|\hat{\mu}_n\right|,
        \\
        &\leq C |n|^{\tau+1} \left|\hat{\mu}_n\right|,
    \end{align*}
    so that $\Phi$ is smooth on $\T^2$. Furthermore, using the symmetries of $\hat{\mu}_n$, it is straightforward that $\Phi$ is also real-valued. Now, define $V_\Gamma$ as a smooth extension of $-\varphi \partial_\theta$ to $\R^3$, where $\varphi$ is the zero average solution to 
    $$
    \langle \vec{\omega}, \nabla_{\T^2} \varphi \rangle = \Phi.
    $$
    This solution is known to exist using the continuous diophantine condition given by \cref{eq:diophantine_continuous} and the fact that $\Phi$ has zero average on $\T^2$. Moreover, we have $V_\Gamma \cdot n = 0$ so that the solution $u_{V_\Gamma}$ to \cref{eq:u} is constant. Using \cref{prop:derivatives_linearizable}, we get 
    \begin{align*}
    (X'_{V_\Gamma})^\theta &= \left\langle \vec{\omega}, \nabla_{\T^2}\left(\omega V_\Gamma^\phi - V_\Gamma^\theta\right) \right\rangle
    \\
    &= \langle \vec{\omega}, \nabla_{\T^2} \varphi \rangle
    \\
    &= \Phi. 
    \end{align*}
    Now, we compute using \cref{prop:derivatives_linearizable}
    \begin{align*}
        \Pi'(\mathcal{E}; V_\Gamma)(\theta) &= \int_{0}^{1} \left(X'_{V_\Gamma}\right)^\theta(\phi, \theta + \omega \phi)d\phi 
        \\
        &= \int_{0}^{1} \Phi(\phi, \theta + \omega \phi)d\phi
        \\
        &= \sum_{n \in \Z\backslash\{0\}} \hat{\Phi}_n \int_{0}^{1} e^{2\pi i n(\theta + \omega \phi)} d\phi 
        \\
        &= \sum_{n \in \Z\backslash\{0\}} \hat{\Phi}_n \frac{e^{2\pi i n \omega}-1}{2\pi i n} e^{2\pi i n \theta}
        \\
        &= \sum_{n \in \Z} \hat{\mu}_n e^{2\pi i {n}\theta}
        \\
        &= \mu(\theta).
    \end{align*}
\end{proof}

\begin{proof}[Proof of \cref{th:surjectivity_diophantine}]
    From \cref{prop:surjectivity_zero_average}, we know that the image of $V \mapsto \Pi'(\mathcal{E};V)$ contains all the smooth zero average functions on $S^1$. By linearity, we therefore only need to find one deformation which produces a derivative of the Poincaré map with nonzero average. This is done by picking $V = 1/(\sqrt{g}\chi)\tilde{n}$, where $\tilde{n}$ is an extension of the normal. Indeed, this verifies $V \cdot n = 1/(\sqrt{g}\chi)$, so that 
    \begin{align*}
        \div_{\Gamma}(B(\Omega) (V \cdot n)) &= \frac{1}{\sqrt{g}}\left(\partial _\phi \left(\sqrt{g} \frac{\chi}{\chi \sqrt{g}}\right)+\partial _\theta \left(\sqrt{g} \frac{\omega\chi}{\chi \sqrt{g}}\right)\right)
        \\
        &=0.
    \end{align*}
    As a consequence, the solution $u_V$ of \cref{eq:u} is constant, and by \cref{prop:derivatives_linearizable} $(X'_V)^\theta$ is given by 
    $$
    (X'_V)^\theta = \frac{2\mathbb{II}\left(B(\Omega),B(\Omega)^\perp\right)}{(\det{g})\chi^3}.
    $$
    Since we assume that $\mathbb{II}\left(B(\Omega), B(\Omega)^\perp\right)$ vanishes nowhere, it is either positive or negative on $\partial \Omega$. As a consequence 
    $$
    \theta \mapsto \int_{0}^{1} (X'_V)^\theta(\phi, \theta + \omega \phi) d\phi,
    $$
    is also either positive or negative on $S^1$, and has therefore a nonzero average.
\end{proof}

{
\section{Conclusion and perspectives}
\paragraph{Conclusion}
In this paper, we have established a shape differentiability result for the Poincaré maps of harmonic fields, and studied some properties of the shape derivative in specific cases. The shape differentiability of the Poincaré map was obtained by proving shape differentiability of harmonic fields in the smooth category using suitable pullbacks on the variational spaces and elliptic regularity. We then studied the case of axisymmetric domains, for which we found that the shape derivative of the Poincaré map of harmonic fields always vanishes. After that, we have found that when the domain has a Poincaré map which is a diophantine rotation on the boundary, the shape derivative may be any smooth function of the circle under an additional assumption relating the curvature of the boundary and the harmonic field. 

\paragraph{Perspectives}
First, we saw in \cref{sec:axisymmetric} that the shape derivative of the Poincaré map of the harmonic field is always zero in the case of the standard torus. One could then naturally ask if the second order derivative may in this case produce changes in the Poincaré map, and in particular, changes in the rotation number. For this, one would need to find an expression for this second order shape derivative. Although the author has made some preliminary computations in this direction, it seems that the formulas we obtain are much more difficult to deal with that in the first order case so that coming up with deformations of the domain leading to changes in the Poincaré map at second order is not an easy task. 

Also, we believe that the main result of \cref{sec:diophantine} leads to two interesting questions which we leave open. First, it is still not clear to the author if the assumption relating the harmonic field and the second fundamental form in \cref{th:surjectivity_diophantine} is often realized in practice. The author attempted to study this condition in the case of the thin toroidal domains studied in \cite{enciso_existence_2015}, but it seems that this assumption is not verified in this case. In fact, we found that the approximate harmonic field denoted as $h_0$ in \cite[Section 5]{enciso_existence_2015} is exactly aligned with the lowest principal curvature direction of the boundary, and that taking some finer approximations leads to changes in sign for the quantity used in the assumption. Although this fact is interesting in itself, it means that the additional assumption used for \cref{th:surjectivity_diophantine} is not satisfied for thin toroidal domains (see \cref{rem:assumption}). One could then try to find other domains for which the assumption is satisfied, but this does not seem like an easy task. Also, we note that the proof of \cref{th:surjectivity_diophantine} mostly uses tangential deformations, which is unusual in shape differentiation. The author believes that proving surjectivity of the differential using only the normal component of the deformation in \cref{eq:X'_linearized} could potentially help to remove this additional condition on the second fundamental form. However, the dependence between $u_V$ and $V \cdot n$ in \cref{eq:X'_linearized} is highly non-explicit which complicates this approach. 

Finally, we believe that \cref{th:surjectivity_diophantine} may lead to interesting local properties of $\Pi$ around embeddings where it applies. Indeed, if we were working in the simpler case of Banach spaces, we would obtain that $\Pi$ is locally surjective around such domains. As a consequence, we would find that generic perturbations would lead to Morse--Smale diffeomorphisms of the circle, and thus rational rotation numbers. However, since we are working in Fréchet Spaces rather than Banach ones, it is necessary to prove surjectivity of the differential in a neighborhood of an embedding satisfying the conditions of \cref{th:surjectivity_diophantine} to prove the local surjectivity of $\Pi$. We refer the reader to \cite{hamilton_inverse_1982} for an example of a local surjectivity theorem in Fréchet spaces. However, the construction used for the proof of \cref{th:surjectivity_diophantine} is not applicable for embeddings close to the original one as it is highly dependent on the diophantine condition of the rotation number. Furthermore, this construction also uses cohomological equations, which leads to a loss of derivatives phenomenon. This implies that, although this approach proves surjectivity of the differential in the smooth category, we do not expect this result to hold for finite regularity. As a consequence, the author believes that one cannot reduce the regularity assumptions to obtain similar results in Banach spaces. The author therefore believes, once again, that another proof of \cref{th:surjectivity_diophantine} using the normal components of the deformation in \cref{eq:X'_linearized} could be beneficial to study the local properties of $\Pi$.
}

{\paragraph*{Acknowledgements} The author would like to express his sincere gratitude to Andrei Agrachev for the insightful conversations which shaped the direction of this research project. Additionally, the author thanks Ugo Boscain and Mario Sigalotti for their advice throughout the development of the article, and Wadim Gerner for his geometrical insights on some aspects of the problem. This research received support from Inria AEX StellaCage.}

\bibliographystyle{alpha}
\bibliography{main}

\end{document}